\DeclareMathAlphabet{\mathfrak}{U}{euf}{m}{n}
\setlist[itemize]{leftmargin=*}
\newcommand\zero{0}
\let\oldproofname=\proofname
\renewcommand{\proofname}{\bfseries\textup{\oldproofname}}
\DeclarePairedDelimiter\ev{\langle}{\rangle}%  No langle rangle
\DeclareMathOperator{\Z}{\mathbb{Z}}
\DeclareMathOperator{\F}{\mathbb{F}}
\DeclareMathOperator{\R}{\mathbb{R}}
\DeclareMathOperator{\Tr}{Tr}
\DeclareMathOperator{\Res}{Res}
\theoremstyle{plain}
\newtheorem{prop}{Proposition}[section]
\newtheorem{lem}[prop]{Lemma}
\newtheorem{cor}[prop]{Corollary}
\theoremstyle{definition}
\newtheorem{rem}[prop]{Remark}
\title{The $RO(C_4)$ cohomology of the infinite real projective space}
\author{Nick Georgakopoulos}
\begin{document}

	\begin{abstract}Following the Hu-Kriz method of computing the $C_2$ genuine dual Steenrod algebra $(H\F_2)_{\bigstar}(H\F_2)$, we calculate the  $C_4$ equivariant Bredon cohomology of the classifying space $\R P^{\infty \rho}=B_{C_4}\Sigma_{2}$ as an $RO(C_4)$ graded Green-functor. We prove that as a module over the homology of a point (which we also compute), this cohomology is not flat. As a result, it can't be used as a test module for obtaining generators in  $(H\F_2)_{\bigstar}(H\F_2)$ as Hu-Kriz do in the $C_2$ case.
	\end{abstract}
	
	\maketitle{}

	\setcounter{tocdepth}{4}
	\tableofcontents
	\section{Introduction}\label{Intro}

Historically, computations in stable equivariant homotopy theory have been much more difficult than their nonequivariant counterparts, even when the groups involved are as simple as possible (i.e. cyclic). In recent years, there has been a resurgence in such calculations for power $2$-cyclic groups $C_{2^n}$, owing to the crucial involvement of $C_8$-equivariant homology in the solution of the Kervaire invariant problem \cite{HHR16}. 

The case of $G=C_2$ is the simplest and most studied one, partially due to its connections to motivic homotopy theory over $\R$ by means of realization functors \cite{HO14}. It all starts with the $RO(C_2)$ homology of a point, which was initially described in \cite{Lew88}. The types of modules over it that can arise as the equivariant homology of spaces were described in \cite{CMay18}, and this description was subsequently used in the computation of the $RO(C_2)$ homology of $C_2$-surfaces in \cite{Haz19}. The $C_2$-equivariant dual Steenrod algebra (in characteristic $2$) was computed in \cite{HK96} and gives rise to a $C_2$-equivariant Adams spectral sequence that has been more recently leveraged in \cite{IWX20}. Another application of the Hu-Kriz computation is the definition of equivariant Dyer-Lashof operations by \cite{Wil19} in the $\F_2$-homology of $C_2$-spectra with symmetric multiplication. Many of these results rely on the homology of certain spaces being free as modules over the homology of a point, and there is a robust theory of such free spectra described in \cite{Hil19}.
	
The case of $G=C_4$ has been much less explored and is indeed considerably more complicated. This can already be seen in the homology of a point in integer coefficients (see \cite{Zeng} and \cite{Geo19}) and the case of $\F_2$ coefficients is not much better (compare subsections \ref{C2Tate} and \ref{ROC4HomolPt} for the $C_2$ and $C_4$ cases respectively). The greater complexity in the ground ring (or to be more precise, ground Green functor), means that modules over it can also be more complicated and indeed, certain freeness results that are easy to obtain in the $C_2$ case no longer hold when generalized to $C_4$ (compare subsection \ref{BC2S2} with sections \ref{BC4S2dec} and \ref{BC4S2ss}).

The computation of the dual Steenrod algebra relies on the construction of Milnor generators. Nonequivariantly, the Milnor generators $\xi_i$ of the mod $2$ dual Steenrod algebra can be defined through the completed coaction of the dual Steenrod algebra on the cohomology of $B\Sigma_2=\R P^{\infty}$: $H^*(BC_{2+};\F_2)=\F_2[x]$ and the completed coaction $\F_2[x]\to (H\F_2)_*(H\F_2)[[x]]$ is: 
\begin{equation}
x\mapsto \sum_ix^{2^i}\otimes \xi_i
\end{equation} 
In the $C_2$-equivariant case, the space replacing $B\Sigma_2$ is the equivariant classifying space $B_{C_2}\Sigma_2$. This is still $\R P^{\infty}$ but now equipped with a nontrivial $C_2$ action (described in subsection \ref{BC2S2}). Over the homology of a point, we no longer have a polynomial algebra on a single generator $x$, but rather a polynomial algebra on two generators $c,b$ modulo the relation
\begin{equation}
c^2=a_{\sigma}c+u_{\sigma}b
\end{equation}
where $a_{\sigma},u_{\sigma}$ are the $C_2$-Euler and orientation classes respectively (defined in section \ref{Notations}). As a module, this is still free over the homology of a point, and the completed coaction is
\begin{gather}
c\mapsto c\otimes 1+\sum_ib^{2^i}\otimes \tau_i\\
b\mapsto \sum_ib^{2^i}\otimes \xi_i
\end{gather}
The $\tau_i,\xi_i$ are the $C_2$-equivariant analogues of the Milnor generators, and Hu-Kriz show that they span the genuine dual Steenrod algebra.

For $C_4$, the cohomology of $B_{C_4}\Sigma_2$ is significantly more complicated (see section \ref{BC4S2ss}) and most importantly is \emph{not} a free module over the homology of a point. In fact, it's not even flat (Proposition \ref{NonFlatnessIs}) bringing into question whether we even have a coaction by the dual Steenrod algebra in this case.

There is another related reason to consider the space $B_{C_4}\Sigma_2$. In \cite{Wil19}, the author describes a framework for equivariant total power operations over an $H\F_2$ module $A$ equipped with a symmetric multiplication. The total power operation is induced from a map of spectra
\begin{equation}
A\to A^{t\Sigma_{[2]}} 
\end{equation}
where $(-)^{t\Sigma_{[2]}}$ is a variant Tate construction defined in \cite{Wil19}. 

In the nonequivariant case, $A\to A^{t\Sigma_{[2]}}$ induces a map $A_*\to A_*((x))$ and the Dyer-Lashof operations $Q^i$ can be obtained as the components of this map:
\begin{equation}
Q(x)=\sum_iQ^i(x)x^i
\end{equation}

In the $C_2$ equivariant case, we have a map $A_{\bigstar}\to A_{\bigstar}[c,b^{\pm}]/(c^2=a_{\sigma}c+u_{\sigma}b)$ and we get power operations
\begin{equation}
Q(x)=\sum_iQ^{i\rho}(x)b^i+\sum_iQ^{i\rho+\sigma}(x)cb^i
\end{equation}
When $A=H\F_2$, $A_{\bigstar}[c,b^{\pm}]/(c^2=a_{\sigma}c+u_{\sigma}b)$ is the cohomology of $B_{C_2}\Sigma_2$ localized at the class $b$.

For $C_4$ we would have to use the cohomology of $B_{C_4}\Sigma_2$ (localized at a certain class) but that is no longer free, meaning that the resulting power operations would have extra relations between them and further complicating the other arguments in \cite{Wil19}.

The computation of $H^{\bigstar}(B_{C_4}\Sigma_{2+};\F_2)$ also serves for a test case of $RO(G)$ homology computations for equivariant classifying spaces where $G$ is not of prime order. We refer the reader to \cite{Shu14}, \cite{Cho18}, \cite{Wil19}, \cite{SW21} for such computations in the $G=C_p$ case.

As for the organization of this paper, section \ref{Notations} describes the conventions and notations that we shall be using throughout this document, as well as the Tate diagram for a group $G$ and a $G$-equivariant spectrum.

Subsections \ref{C2Tate} and \ref{C4Tate} describe the Tate diagram for $C_2$ and $C_4$ respectively using coefficients in the constant Mackey functor $\F_2$.

In section \ref{Classifying} we define equivariant classifying spaces $B_GH$ and briefly explain the elementary computation of the cohomology of $B_{C_2}\Sigma_2$ (this argument also appears in \cite{Wil19}).

In section \ref{BC4Sigma2Summary} we present the result of the computation of $H^{\bigstar}(B_{C_4}\Sigma_{2+};\F_2)$ and prove that it's not flat as a Mackey functor module over $(H\F_2)_{\bigstar}$. Sections \ref{BC4S2dec} and \ref{BC4S2ss} contain the proofs of the computation of the cohomology of $B_{C_4}\Sigma_2$.

We have included two appendices in the end; Appendix \ref{AppendixSS} contains pictures of the spectral sequence converging to $H^{\bigstar}(B_{C_4}\Sigma_{2+};\F_{2})$ while Appendix \ref{AppendixPoint} contains a detailed description of $H^{\bigstar}(S^0;\F_{2})$, which is the ground Green functor over which all our Mackey functors are modules.

To aid in the creation of these appendices, we extensively used the computer program of \cite{Geo19} available \href{https://github.com/NickG-Math/Mackey}{here}. In fact, we have introduced new functionality in the software that computes the $RO(G)$-graded homology of spaces such as $B_{C_4}\Sigma_2$ given an explicit equivariant CW decomposition (such as we discuss in subsection \ref{CWDecompOrbits}). This assisted in the discovery of a nontrivial $d^2$ differential in the spectral sequence of $B_{C_4}\Sigma_2$ (see Remark \ref{ProgramDifferential}), although the provided proof is independent of the computer computation.
	
\subsection*{Acknowledgment}We would like to thank Dylan Wilson for answering our questions regarding his paper \cite{Wil19} as well as \cite{HK96}. We would also like to thank Peter May for his numerous editing suggestions, that vastly improved the readability of this paper.

	\section{Conventions and notations}\label{Notations}
	
We will use the letter $k$ to denote  the field $\F_2$, the constant Mackey functor $k=\underline{\F_2}$ and the corresponding Eilenberg-MacLane spectrum $Hk$. The meaning should always be clear from the context.

All our homology and cohomology will be in $k$ coefficients. \medbreak

The data of a $C_4$ Mackey functor $M$ can be represented by a diagram displaying the values of $M$ on orbits, its restriction and transfer maps and the actions of the Weyl groups. We shall refer to $M(C_4/C_4), M(C_4/C_2), M(C_4/e)$ as the top, middle and bottom levels of the Mackey functor $M$ respectively. The Mackey functor diagram takes the form:

 \begin{equation}
M=\begin{tikzcd}
M(C_4/C_4)\ar[d, "\Res^4_2" left, bend right]\\
M(C_4/C_2)\ar[u, "\Tr_2^4" right,bend right]\ar[d, "\Res^2_1" left, bend right]\ar[loop right, "C_4/C_2" right]\\
M(C_4/e)\ar[u, "\Tr_1^2" right,bend right]\ar[loop right, "C_4" right]
\end{tikzcd}
\end{equation}

If $X$ is a $G$-spectrum then $X_{\bigstar}$ denotes the $RO(G)$-graded $G$-Mackey functor defined on orbits as 
\begin{equation}
	X_{\bigstar}(G/H)=X_{\bigstar}^H=\pi^H(S^{-\bigstar}\wedge X)=[S^{\bigstar},X]^H
\end{equation}
The index $\bigstar$ will always be an element of the real representation ring $RO(G)$. \medbreak
	
$RO(C_4)$ is spanned by the irreducible representations $1,\sigma, \lambda$ where $\sigma$ is the $1$-dimensional sign representation and $\lambda$ is the $2$-dimensional representation given by rotation by $\pi/2$.

For $V=\sigma$ or $V=\lambda$, denote by $a_V\in k_{-V}^{C_4}$ the Euler class induced by the inclusion of north and south poles $S^0\hookrightarrow S^V$; also denote by $u_V\in k_{|V|-V}^{C_4}$ the orientation class generating the Mackey functor  $k_{|V|-V}=k$.

We will use the notation $\bar a_V,\bar u_V$ to denote the restrictions of $a_V,u_V$ to middle level, and $\overline{ \bar u}_V$ to denote the restriction of $u_V$ to bottom level.

We also write $a_{\sigma_2}\in k_{-\sigma_2}^{C_2}$ and $u_{\sigma_2}\in k_{1-\sigma_2}^{C_2}$ for the $C_2$ Euler and orientation classes, where $\sigma_2$ is the sign representation of $C_2$.

The Gold Relation (\cite{HHR16}) takes the following form in $k$ coefficients:
			\begin{equation}
	a_{\sigma}^2u_{\lambda}=0
	\end{equation}

	Let $EG$ be a contractible free $G$-space and $\tilde EG$ be the cofiber of the collapse map $EG_+\to S^0$.	We use the notation $X_h=EG_+\wedge X$, $\tilde X=\tilde EG\wedge X$, $X^h=F(EG_+,X)$ and $X^t=\widetilde{X^h}$.
		
	The Tate diagram (\cite{GM95}) then takes the form:
	\begin{center}\begin{tikzcd}
		X_h\ar[d,"\simeq"]\ar[r]&X\ar[r]\ar[d]& \tilde X\ar[d]\\
		X_h\ar[r]& X^h\ar[r]& X^t
		\end{tikzcd}\end{center}	
	The square on the right is a homotopy pullback diagram and is called the Tate square. 
	
	Applying $\pi_{\bigstar}^G$ on the Tate diagram gives
	\begin{center}\begin{tikzcd}
		X_{hG\bigstar}\ar[r]\ar[d,"\simeq"]&X_{\bigstar}^G\ar[r]\ar[d]& \tilde X_{\bigstar}^G\ar[d]\\
		X_{hG\bigstar}\ar[r]&X^{hG}_{\bigstar}\ar[r]& X^{tG}_{\bigstar}
		\end{tikzcd}\end{center}

	\section{\texorpdfstring{The Tate diagram for $C_2$ and $C_4$}{The Tate diagram for C2 and C4}}\label{TateC2C4}
	
	\subsection{\texorpdfstring{The Tate diagram for $C_2$}{The Tate diagram for C2}}\label{C2Tate}

	For $X=k$ and $G=C_2$ the corners of the Tate square are:
	\begin{gather}
	k_{\bigstar}^{C_2}=k[a_{\sigma_2},u_{\sigma_2}]\oplus k\{\frac{\theta_{\sigma_2}}{a_{\sigma_2}^iu_{\sigma_2}^j}\}_{i,j\ge 0}\\
	\tilde k_{\bigstar}^{C_2}=k[a_{\sigma_2}^{\pm},u_{\sigma_2}]\\
	k_{\bigstar}^{hC_2}=k[a_{\sigma_2},u_{\sigma_2}^{\pm}]\\
	k_{\bigstar}^{tC_2}=k[a_{\sigma_2}^{\pm},u_{\sigma_2}^{\pm}]
	\end{gather}
	where $\theta_{\sigma_2}=\Tr_1^2(\Res^2_1(u_{\sigma_2})^{-2})$. The map $k_h\to k$ in the Tate diagram induces
	\begin{gather}
	k_{hC_2\bigstar}=\Sigma^{-1}k^{tC_2}_{\bigstar}/k^{hC_2}_{\bigstar}\to k^{C_2}_{\bigstar}\\
	a_{\sigma_2}^{-i}u_{\sigma_2}^{-j}\mapsto \frac{\theta_{\sigma_2}}{a_{\sigma_2}^iu_{\sigma_2}^{j-1}}
	\end{gather}

\subsection{\texorpdfstring{The $RO(C_4)$ homology of a point}{The RO(C4) homology of a point}}\label{ROC4HomolPt} The $RO(C_4)$ homology of a point (in $k$ coefficients) is significantly more complicated than the $RO(C_2)$ one (see \cite{Geo19} for the integer coefficient case). Appendix \ref{AppendixPoint} contains a very detailed description of it, and the goal in this subsection is to provide a more compact version.

The top level is:
\begin{equation}\label{C4homologypoint}
	k_{\bigstar}^{C_4}\doteq k[a_{\sigma},u_{\sigma},a_{\lambda},u_{\lambda},\frac{u_{\lambda}}{u_{\sigma}^i}, \frac{a_{\sigma}^2}{a_{\lambda}^i}]\oplus k[a_{\lambda}^{\pm}]\{\frac{\theta}{a_{\sigma}^iu_{\sigma}^j}\} \oplus 
	k\{\frac{\frac{\theta}{a_{\lambda}}a_{\sigma}^{1+\epsilon}}{u_{\sigma}^ia_{\lambda}^j}\}\oplus k[u_{\sigma}^{\pm}]\{\frac{\frac{\theta}{a_{\lambda}}a_{\sigma}^{1+\epsilon} }{a_{\lambda}^ju_{\lambda}^{1+m}} \}
\end{equation}
where the indices $i,j,m$ range in $0,1,2,...$ and $\epsilon$ ranges in $0, 1$.

The use of $\doteq$ as opposed to $=$ is meant to signify some subtlety present in \eqref{C4homologypoint} that needs to be clarified before the equality can be used. This subtlety has to do with how quotients are defined (cf \cite{Geo19}) and how elements multiply (the multiplicative relations). We begin this process of interpreting \eqref{C4homologypoint} with the definition of $\theta$: $\theta=\Tr_2^4(\bar u_{\sigma}^{-2})$. We further introduce the elements
\begin{gather}
	x_{n,m}=\Tr_2^4(\overline{\bar u}_{\sigma}^{-n}\overline{\bar u}_{\lambda}^{-m})=\frac{x_{0,1}}{u_{\sigma}^nu_{\lambda}^{m-1}}\text{ , }m\ge 1
\end{gather}
where
\begin{equation}
	x_{0,1}=a_{\sigma}^2\frac{\theta}{a_{\lambda}}=\theta\frac{a_{\sigma}^2}{a_{\lambda}}
\end{equation}
With this notation, the second curly bracket in \eqref{C4homologypoint} contains elements of the form
\begin{equation}
\frac{x_{n,1}}{a_\lambda^i}\text{ , }\frac{x_{n,1}}{a_\sigma a_\lambda^i}
\end{equation}
and the third contains
\begin{equation}
\frac{x_{n,m}}{a_\lambda^i}\text{ , }\frac{x_{n,m}}{a_\sigma a_\lambda^i}\text{ , }m>1
\end{equation}
The behavior of the $x_{n,m}$ depends crucially on whether $m=1$ or not: $x_{n,1}u_{\sigma}=0$ but $x_{n,m}u_{\sigma}\neq 0$ for $m>1$; the $x_{n,1}$ are infinitely $a_{\sigma}$ divisible since:
\begin{equation}
\frac{x_{n,1}}{a_\sigma^2}=\frac{\theta}{u_\sigma^na_\lambda}
\end{equation}
 while the $x_{n,m}$, $m>1$, can only be divided by $a_{\sigma}$ once. That's why we separate them into two distinct summands in \eqref{C4homologypoint}.

The third curly bracket in \eqref{C4homologypoint} for $\epsilon=0$ consists of quotients of
\begin{equation}
	s:=\frac{\frac{\theta}{a_{\lambda}}a_{\sigma}}{u_{\lambda}}u_{\sigma}=\frac{x_{0,2}u_\sigma}{a_\sigma}
\end{equation}
which is the mod $2$ reduction of the element $s$ from \cite{Geo19}. Note that $su_\lambda=sa_\lambda=0$.

The quotients in the RHS of \eqref{C4homologypoint} are all chosen coherently (cf \cite{Geo19}), that is we always have the cancellation property:
\begin{equation}
	z\cdot \frac{y}{xz}=\frac{y}{x}
\end{equation}
We also have that
\begin{equation}
	\frac{x}{y}\cdot \frac{z}{w}=\frac{xz}{yw}
\end{equation}
as long as $xz\neq 0$ (this condition is necessary: $(\theta/a_{\lambda})a_{\sigma}\neq 0$ is not $(\theta a_{\sigma})/a_{\lambda}$ as $\theta a_{\sigma}=0$).

To compute any product of two elements in the RHS of \eqref{C4homologypoint} we follow the following procedure: 
\begin{itemize}
	\item If both elements involve $\theta$ then the product is automatically $0$.
	\item If neither element involves $\theta$ then perform all possible cancellations and use the relation
	\begin{equation}
		\frac{u_{\lambda}}{u_{\sigma}^i}\cdot \frac{a_{\sigma}^2}{a_{\lambda}^j}=0
	\end{equation}
	\item If only one element involves $\theta$ perform all possible cancellations and use 
	\begin{equation}
		\frac{x}{y}\cdot \frac{z}{w}=\frac{xz}{yw}
	\end{equation}
	as long as $xz$ appears in \eqref{C4homologypoint}. If the resulting element appears in \eqref{C4homologypoint} then that's the product; if not then the product is $0$.
\end{itemize} 
These are all the remarks needed to properly interpret the formula in \eqref{C4homologypoint}  for the top level $k^{C_4}_{\bigstar}$.\medbreak

The middle level is:
\begin{equation}\label{C4homologyofapointmidlevel}
	k^{C_2}_{\bigstar}\doteq k[\bar a_{\lambda},\bar u_{\lambda},\sqrt{\bar a_{\lambda}\bar u_{\lambda}},\bar u_{\sigma}^{\pm}] \oplus k[\bar u_{\sigma}^{\pm}]\{\frac{v}{\bar a_{\lambda}^i\bar u_{\lambda}^j \sqrt{\bar a_{\lambda}\bar u_{\lambda}}^{\epsilon}}\}
\end{equation}
Here, $\sqrt{\bar a_{\lambda}\bar u_{\lambda}}$ is the (unique) element whose square is $\bar a_{\lambda}\bar u_{\lambda}$ and $v$ is defined by $v=\Tr_1^2(\overline{\bar u}_{\lambda}^{-1})$. Furthermore,
\begin{gather}
	\Tr_2^4(\sqrt{\bar a_{\lambda}\bar u_{\lambda}})=\frac{a_{\sigma}u_{\lambda}}{u_{\sigma}}\\
	\Tr_2^4(v)=x_{0,1}\\
	\bar s:= \Res^4_2(s)=\frac{v}{\sqrt{\bar a_{\lambda}\bar u_{\lambda}}}\\
	\Res^4_2\Big(\frac{a_{\sigma}^2}{a_{\lambda}}\Big)=v\bar u_{\sigma}^2
\end{gather}
The interpretation of \eqref{C4homologyofapointmidlevel} is complete. In terms of the notation of the $C_2$ generators,
\begin{gather}
	\bar a_{\lambda}=a_{\sigma_2}^2\text{ , }\bar u_{\lambda}=u_{\sigma_2}^2\text{ , }\sqrt{\bar a_{\lambda}\bar u_{\lambda}}=a_{\sigma_2}u_{\sigma_2}\text{ , }v=\theta_{\sigma_2}
\end{gather}
Finally the bottom level is very simple:
\begin{equation}
	k^e_{\bigstar}=k[\overline{\bar u}_{\lambda}^{\pm},\overline{\bar u}_{\sigma}^{\pm}]
\end{equation}
For more details, consult the Appendix \ref{AppendixPoint}.\medbreak

	\subsection{\texorpdfstring{The Tate diagram for $C_4$}{The Tate diagram for C4}}\label{C4Tate}
	 
Using the notation of the previous subsection, the corners of the Tate square are:
	 \begin{gather}
	 k_{\bigstar}^{C_4}\doteq k[a_{\sigma},u_{\sigma},a_{\lambda},u_{\lambda},\frac{u_{\lambda}}{u_{\sigma}^i}, \frac{a_{\sigma}^2}{a_{\lambda}^i}]\oplus k[a_{\lambda}^{\pm}]\{\frac{\theta}{a_{\sigma}^iu_{\sigma}^j}\} \oplus 
	 	k\{\frac{\frac{\theta}{a_{\lambda}}a_{\sigma}^{1+\epsilon}}{u_{\sigma}^ia_{\lambda}^j}\}\oplus k[u_{\sigma}^{\pm}]\{\frac{\frac{\theta}{a_{\lambda}}a_{\sigma}^{1+\epsilon} }{a_{\lambda}^ju_{\lambda}^{1+m}} \}\\
	 \tilde k^{C_4}_{\bigstar}=a_{\lambda}^{-1}k^{C_4}_{\bigstar}\doteq k[a_{\sigma},u_{\sigma},a_{\lambda}^{\pm},u_{\lambda},\frac{u_{\lambda}}{u_{\sigma}^i}]\oplus k[a_{\lambda}^{\pm}]\{\frac{\theta }{a_{\sigma}^iu_{\sigma}^j}\}\\
	 k^{hC_4}_{\bigstar}=k[a_{\sigma},u_{\sigma}^{\pm},a_{\lambda},u_{\lambda}^{\pm}]/a_{\sigma}^2\\
	 k^{tC_4}_{\bigstar}=k[a_{\sigma},u_{\sigma}^{\pm},a_{\lambda}^{\pm},u_{\lambda}^{\pm}]/a_{\sigma}^2
	 \end{gather}
	 The map $k_h\to k$ in the Tate diagram induces
	 \begin{gather} k_{hC_4\bigstar}=\Sigma ^{-1}k^{tC_4}_{\bigstar}/k^{hC_4}_{\bigstar}\to k^{C_4}_{\bigstar}\\
u_{\sigma}^{-i}a_{\lambda}^{-j}u_{\lambda}^{-m}\mapsto \frac{\frac{\theta}{a_{\lambda}}a_{\sigma}}{u_{\sigma}^{i-1}a_{\lambda}^{j-1}u_{\lambda}^m}
	 \end{gather}

\section{Equivariant classifying spaces}\label{Classifying}
	 
$E_GK$ is a $G\times K$ space that's $K$-free and $(E_GK)^{\Gamma}$ is contractible for any subgroup $\Gamma\subseteq G\times K$ with $\Gamma\cap (\{1\}\times K)=\{1\}$ (a graph subgroup). The spaces $E_G\Sigma_n$ are those appearing in a $G$-$E_{\infty}$-operad. 

We define the equivariant classifying space
\begin{equation}
B_GK=E_GK/K
\end{equation}

\subsection{\texorpdfstring{The case of $C_2$}{The case of C2}}\label{BC2S2} For $G=C_2$, the spaces $B_{C_2}\Sigma_2$ are used in the computation of the $C_2$ dual Steenrod algebra by Hu-Kriz (\cite{HK96}) and for the construction of the total $C_2$-Dyer-Lashof operations in \cite{Wil19}. Both use the computation
\begin{equation}
k_{C_2}^{\bigstar}(B_{C_2}\Sigma_{2+})=k_{C_2}^{\bigstar}[c,b]/(c^2=a_{\sigma_2}c+u_{\sigma_2}b)
\end{equation}
where $c,b$ are classes in cohomological degrees $\sigma_2, 1+\sigma_2$ respectively. Let us note here that $B_{C_2}\Sigma_2$ is $\R P^{\infty}$ with a nontrivial $C_2$ action; the restrictions of $c,b$ are the generators of degree $1,2$ of $k_*(\R P^{\infty})$.

We shall now summarize this computation, since part of it will be needed for the analogous computation when $G=C_4$ which takes place in sections \ref{BC4Sigma2Summary}-\ref{BC4S2ss}.

Let $\sigma,\tau$ be the sign representations of $C_2,\Sigma_2$ respectively and $\rho=1+\sigma$. Then $E_{C_2}\Sigma_2=S(\infty(\rho\otimes \tau))$; the graph subgroups of $C_2\times \Sigma_2$ are $C_2, \Delta$ and their orbits correspond to the cells
\begin{gather}
\frac{C_2\times \Sigma_2}{C_2}=S(1\otimes \tau)\\
\frac{C_2\times \Sigma_2}{\Delta}=S(\sigma\otimes \tau)
\end{gather}
\cite{Wil19} defines a filtration on $E_{C_2}\Sigma_2$ given by
\begin{equation}
S(1\otimes \tau)\subseteq S(\rho\otimes \tau)\subseteq S((\rho+1)\otimes \tau)\subseteq S(2\rho\otimes \tau) \subseteq\cdots
\end{equation}
and whose quotients (after adjoining disjoint basepoints) are 
\begin{gather}
gr_{2j+1}=\frac{C_2\times \Sigma_2}{\Delta}_+\wedge S^{(j+1)\rho\otimes 1-1}\\
gr_{2j}=\Sigma_{2+}\wedge S^{j\rho\otimes 1}
\end{gather}
Quotiening $\Sigma_2$ gives a filtration for $B_{C_2}\Sigma_{2+}$ with
\begin{gather}
gr_{2j+1}=S^{(j+1)\rho\otimes 1-1}\\
gr_{2j}=S^{j\rho\otimes 1}
\end{gather}
Applying $k^{\bigstar}$ yields a spectral sequence
\begin{equation}
E^1=k^{\bigstar}\{e^{j\rho},e^{j\rho+\sigma}\}\implies k^{\bigstar}(B_{C_2}\Sigma_{2+})
\end{equation}
	 of modules over the Green functor $k^{\bigstar}$. The fact that the differentials are module maps gives $E_1=E_2$ for degree reasons. Furthermore, the vanishing of the $RO(C_2)$ homology of a point in a certain range gives $E_2=E_{\infty}$. The $E_{\infty}$ page is free as a module over the Green functor $k^{\bigstar}$, hence there can't be any extension problems and we get the module structure:
\begin{equation}
k^{\bigstar}(B_{C_2}\Sigma_{2+})=k^{\bigstar}\{e^{j\rho},e^{j\rho+\sigma}\}
\end{equation}	 
It's easier to prove (using the homotopy fixed point spectral sequence) that:
\begin{equation}
k^{h\bigstar}(B_{C_2}\Sigma_{2+})=k^{h\bigstar}[w]
\end{equation}	 
where $w$ has cohomological degree $1$. The map $k\to k^h$ from section \ref{Notations} induces
\begin{equation}
k^{\bigstar}(B_{C_2}\Sigma_{2+})\to k^{h\bigstar}(B_{C_2}\Sigma_{2+})
\end{equation}	 
which is localization with $u_{\sigma_2}$ being inverted. Thus we can see that $c=e^{\sigma}$ maps to $u_{\sigma_2}w$ (or $a_{\sigma_2}+u_{\sigma_2}w$), $b=e^{\rho}$ maps to $a_{\sigma_2}w+u_{\sigma_2}w^2$ and conclude that:
\begin{equation}
k_{C_2}^{\bigstar}(B_{C_2}\Sigma_{2+})=k_{C_2}^{\bigstar}[c,b]/(c^2=a_{\sigma_2}c+u_{\sigma_2}b)
\end{equation}
$B_{C_2}\Sigma_2$ is a $C_2$-$H$-space so $k^{\bigstar}_{C_2}(B_{C_2}\Sigma_{2+})$ is a Hopf algebra (since it is flat over $k^{\bigstar}_{C_2}$). For degree reasons, we can see that
\begin{gather}
\Delta(c)=c\otimes 1+1\otimes c\\
\Delta(b)=b\otimes 1+1\otimes b\\
\epsilon(c)=\epsilon(b)=0
\end{gather}
(we can add $a_{\sigma_2}$ to $c$ to force $\epsilon(c)=0$). The primitive elements are spanned by $c, b^{2^i}$.

 \section{\texorpdfstring{The cohomology of $B_{C_4}\Sigma_2$}{The cohomology of the C4 classifying space of O(1)}}\label{BC4Sigma2Summary}
 
 In the next section we shall construct a cellular decomposition of $B_{C_4}\Sigma_2$ giving rise to a spectral sequence computing $k^{\bigstar}(B_{C_4}\Sigma_{2+})$. Here's the result of the computation, describing $k^{\bigstar}(B_{C_4}\Sigma_{2+})$ as a Green functor algebra over $k^{\bigstar}$:
 
 \begin{prop}\label{BiggestProposition}There exist elements $e^a,e^u,e^{\lambda},e^{\rho}$ in degrees $\sigma+\lambda,\sigma+\lambda-2, \lambda, \rho$ of $k^{\bigstar}_{C_4}(B_{C_4}\Sigma_{2+})$ respectively, such that 
 	\begin{equation}
 	k^{\bigstar}_{C_4}(B_{C_4}\Sigma_{2+})=\frac{k_{C_4}^{\bigstar}\Big[e^a,\dfrac{e^u}{u_{\sigma}^i},\dfrac{e^{\lambda}}{u_{\sigma}^i},e^{\rho}\Big]_{i\ge 0}}S
 	\end{equation}
The relation set $S$ consists of two types of relations (we use indices $i,j\ge 0$):
 	\begin{itemize}
 		\item Module relations:
    \begin{gather}
   % u_{\sigma}\frac{e^u}{u_{\sigma}^i}=\frac{e^u}{u_{\sigma}^{i-1}}\\
   %   u_{\sigma}\frac{e^{\lambda}}{u_{\sigma}^i}=\frac{e^{{\lambda}}}{u_{\sigma}^{i-1}}\\
	\frac{a_{\sigma}^2}{a_{\lambda}^j}\frac{e^{u}}{u_{\sigma}^i}=0\\
\frac{\frac{\theta}{a_{\lambda}}a_{\sigma}}{u_{\sigma}^{i-2}a_{\lambda}^{j-1}}e^{a}+\frac{s}{u_{\sigma}^{i-1}a_{\lambda}^{j-2}}e^{u}=\frac{a_{\sigma}^2}{a_{\lambda}^j}\frac{e^{\lambda}}{u_{\sigma}^i}
\end{gather}
	\item Multiplicative relations:
    \begin{gather}
    	    \frac{e^u}{u_{\sigma}^i}\frac{e^u}{u_{\sigma}^j}
    	=\frac{u_{\lambda}}{u_{\sigma}^{i+j-2}}e^{\lambda}\\
    	\frac{e^{\lambda}}{u_{\sigma}^i}\frac{e^u}{u_{\sigma}^j}
    	=\frac{u_{\lambda}}{u_{\sigma}^{i+j}}e^a+a_{\lambda}\frac{e^u}{u_{\sigma}^{i+j}}\\
    	e^a\frac{e^u}{u_{\sigma}^i}
    	=\frac{u_{\lambda}}{u_{\sigma}^{i-1}}e^{\rho}+a_\sigma \frac{u_{\lambda}}{u_{\sigma}^i}e^a\\
    	\frac{e^{\lambda}}{u_{\sigma}^i}\frac{e^{\lambda}}{u_{\sigma}^j}
    	=\frac{u_{\lambda}}{u_{\sigma}^{i+j+1}}e^{\rho}+a_\sigma\frac{u_{\lambda}}{u_{\sigma}^{i+j+2}} e^a+a_{\lambda}\frac{e^{\lambda}}{u_{\sigma}^{i+j}}\\
    	e^a\frac{e^{\lambda}}{u_{\sigma}^i}
    	=\frac{e^u}{u_{\sigma}^{i+1}}e^{\rho}+a_\sigma \frac{u_\lambda}{u_{\sigma}^{i+1}}e^{\rho} \\
    	(e^a)^2=
    	u_{\sigma}e^{\lambda}e^{\rho}+a_\sigma \frac{e^u}{u_\sigma}e^\rho +u_{\sigma}a_{\lambda}e^{\rho}+a_\sigma a_\lambda e^a
 \end{gather}
 	\end{itemize}
 The middle level of $k^{\bigstar}(B_{C_4}\Sigma_{2+})$ is generated by the restrictions of $e^a,e^u,e^\lambda,e^\rho$, which we denote by $\bar e^a,\bar e^u,\bar e^\lambda,\bar e^\rho$ respectively, and two quotients as follows:
  	\begin{equation}
 	k^{\bigstar}_{C_2}(B_{C_4}\Sigma_{2+})=\frac{k_{C_2}^{\bigstar}\Big[\bar e^a,\bar e^u,\bar e^{\lambda}, \bar e^\rho, \dfrac{\sqrt{\bar a_\lambda \bar u_\lambda}\bar e^u}{ \bar u_\lambda},\dfrac{\bar a_\lambda \bar u_\sigma^{-1}\bar e^u+\sqrt{\bar a_\lambda \bar u_\lambda}\bar e^{\lambda}}{\bar u_\lambda}\Big]}{\Res^4_2(S)}
 \end{equation}
Here, $\Res^4_2(S)$ denotes the relation set obtained by applying the ring homomorphism $\Res^4_2$ on each relation of $S$. That is, we have the module relations for any $i\ge 0$:
 \begin{equation}
 	\frac{v}{\bar a_\lambda^i}\bar e^u=\frac{v}{\bar a_\lambda^i}\bar e^\lambda=0
 \end{equation}
and the multiplicative relations:
   \begin{gather}
	(\bar e^u)^2=\bar u_\sigma^2\bar u_{\lambda}\bar e^\lambda\\
   	\bar e^{\lambda}\bar e^u=\bar u_\lambda\bar e^a+\bar a_{\lambda}\bar e^u\\
\bar e^a\bar e^u
=\bar u_{\lambda}\bar u_{\sigma}\bar e^{\rho}\\
(\bar e^{\lambda})^2 =\bar u_{\lambda}\bar u_{\sigma}^{-1}
\bar e^{\rho}+\bar a_{\lambda}\bar e^{\lambda}\\
\bar e^a\bar e^{\lambda}=\bar u_{\sigma}^{-1}\bar e^u\bar e^{\rho}\\
(\bar e^a)^2=
\bar u_{\sigma}\bar e^{\lambda}\bar e^{\rho}+\bar u_{\sigma}\bar a_{\lambda}\bar e^{\rho}
\end{gather}

  As for the Mackey functor structure, the Weyl group $C_4/C_2$ action on the generators is trivial and we have:
 \begin{itemize}
 	\item Mackey Functor relations: \begin{gather}
 		\Tr_2^4\Big(\bar u_{\sigma}^{-i}\dfrac{\sqrt{\bar a_\lambda \bar u_\lambda}\bar e^u}{\bar u_\lambda}\Big)=a_{\sigma}\frac{e^{u}}{u_{\sigma}^{i+1}}\\
 		\Tr_2^4\Big(\bar u_{\sigma}^{-i}\dfrac{\bar a_\lambda \bar u_\sigma^{-1}\bar e^u+\sqrt{\bar a_\lambda \bar u_\lambda}\bar e^{\lambda}}{\bar u_\lambda}\Big)=a_{\sigma}\frac{e^{\lambda}}{u_{\sigma}^{i+1}}
 		%\\
 	%	\Tr_2^4(e^{'\lambda+1}\bar u_{\sigma}^{-i}\sqrt{\bar a_{\lambda}\bar u_{\lambda}})=a_{\sigma}\frac{u_{\lambda}}{u_{\sigma}^{i+2}}e^{a}+a_{\sigma}a_{\lambda}\frac{e^{u}}{u_{\sigma}^{i+2}}
 	\end{gather}
 \end{itemize}
Finally, the bottom level is
 	\begin{equation}
	k^{\bigstar}_e(B_{C_4}\Sigma_{2+})=k_e^{\bigstar}[\Res^4_1(e^u)]
\end{equation}
with trivial Weyl group $C_4$ action and Mackey functor relations obtained by applying $\Res^4_1$ to the multiplicative relations of $S$:
   \begin{gather}
	\Res^4_1 e^\lambda=\overline{\bar u}_\sigma^{-2}\overline{\bar u}_{\lambda}^{-1}\Res^4_1(e^u)^2\\
	\Res^4_1 e^a=\overline{\bar u}_\sigma^{-2}\overline{\bar u}_\lambda^{-2}\Res^4_1(e^u)^3\\
\Res^4_1 e^{\rho}=\overline{\bar u}_{\lambda}^{-3}\overline{\bar u}_{\sigma}^{-3}	\Res^4_1(e^u)^4
\end{gather}
 \end{prop}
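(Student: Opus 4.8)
The plan is to mimic the $C_2$ computation of subsection \ref{BC2S2}: build an equivariant cellular filtration of $B_{C_4}\Sigma_2$ coming from a filtration of $E_{C_4}\Sigma_2=S(\infty(\rho\otimes\tau))$ by graph subgroups of $C_4\times\Sigma_2$, run the resulting $k^{\bigstar}$-module spectral sequence (carried out in section \ref{BC4S2dec} and \ref{BC4S2ss}, which we may assume), and then name the four generators $e^a,e^u,e^\lambda,e^\rho$ as the permanent cycles detecting the appropriate cells in cohomological degrees $\sigma+\lambda$, $\sigma+\lambda-2$, $\lambda$, $\rho$. Since the cells appear in degrees that are multiples of $\lambda$ shifted by $0,\sigma,-1,\sigma-1$ (mirroring the $e^{j\rho},e^{j\rho+\sigma}$ pattern for $C_2$ but with $\rho$ replaced by $\lambda$), the $E_1$-page is a free $k^{\bigstar}_{C_4}$-module on these classes and their $u_\sigma$-divided versions $e^u/u_\sigma^i$, $e^\lambda/u_\sigma^i$ (the divisions forced because $e^u,e^\lambda$ come from cells carrying extra $u_\sigma$-divisibility, exactly as $x_{n,1}$ and $s$ are infinitely $a_\sigma$-divisible in \eqref{C4homologypoint}). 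Unlike the $C_2$ case the spectral sequence does not collapse at $E_1$: there is a nontrivial $d^2$ (Remark \ref{ProgramDifferential}), and after that it degenerates for degree reasons using the sparsity of $k^{\bigstar}_{C_4}$.

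Next I would pin down the ring structure. The module relations $\frac{a_\sigma^2}{a_\lambda^j}\frac{e^u}{u_\sigma^i}=0$ and the $\theta$-relation come directly from multiplying the generators by the relevant elements of $k^{\bigstar}_{C_4}$ listed in \eqref{C4homologypoint} and checking that the products land in a trivial group of the $E_\infty$-page (the $\frac{a_\sigma^2}{a_\lambda^j}$-tower and the $x_{n,m}$, $s$ towers interact with the cells in a constrained way). For the multiplicative relations, the cleanest route is to compare with the localized/Tate picture: the map $k\to k^h$ induces
\begin{equation}
k^{\bigstar}_{C_4}(B_{C_4}\Sigma_{2+})\to k^{h\bigstar}_{C_4}(B_{C_4}\Sigma_{2+})=k^{h\bigstar}_{C_4}[w]/(\text{relation from } a_\sigma^2=0),
\end{equation}
which is injective after inverting $u_\sigma$ (by the freeness of the $E_\infty$-page away from the $\theta$- and $s$-towers), so each product $e^xe^y$ is determined by its image as a polynomial in $w$ with $k^{h\bigstar}_{C_4}$-coefficients; one then recognizes that polynomial as the image of the claimed right-hand side. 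The identities $e^u\mapsto u_\sigma u_\lambda w^2 + (\cdots)$, $e^\lambda\mapsto u_\sigma u_\lambda w^2+u_\lambda a_\lambda\cdots$, $e^a\mapsto\cdots w^3$, $e^\rho\mapsto\cdots w^4$ (exact coefficients fixed by degree and by restriction to the bottom level, where everything becomes a power of $\Res^4_1 e^u$) make each of the six multiplicative relations a finite check in $k^{h\bigstar}_{C_4}[w]$.

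Finally the middle and bottom levels: the bottom level is non-equivariant, $B_{C_4}\Sigma_2$ restricted to $e$ is $\R P^\infty$ with trivial action, giving $k^{\bigstar}_e[\Res^4_1 e^u]$ on one polynomial generator, and the stated formulas for $\Res^4_1 e^\lambda,\Res^4_1 e^a,\Res^4_1 e^\rho$ are just the images of the multiplicative relations under $\Res^4_1$ (using $\Res^4_1 u_\sigma=1$, $\Res^4_1 u_\lambda=\overline{\bar u}_\lambda$, $\Res^4_1 a_\sigma=\Res^4_1 a_\lambda=0$). For the middle level one runs the analogous $C_2\subseteq C_4$ restricted filtration — or equivalently applies $\Res^4_2$ to everything — landing in $k^{\bigstar}_{C_2}$ whose description \eqref{C4homologyofapointmidlevel} involves $\sqrt{\bar a_\lambda\bar u_\lambda}$ and $v$; the two extra quotient generators $\frac{\sqrt{\bar a_\lambda\bar u_\lambda}\bar e^u}{\bar u_\lambda}$ and $\frac{\bar a_\lambda\bar u_\sigma^{-1}\bar e^u+\sqrt{\bar a_\lambda\bar u_\lambda}\bar e^\lambda}{\bar u_\lambda}$ appear because $\bar e^u,\bar e^\lambda$ are $\bar u_\lambda$-divisible in a twisted way (the numerators being exactly the combinations killed by $v$), and the transfer formulas $\Tr_2^4(\cdots)=a_\sigma\frac{e^u}{u_\sigma^{i+1}}$ etc. follow from Frobenius reciprocity together with the transfer computations $\Tr_2^4(v)=x_{0,1}$, $\Tr_2^4(\sqrt{\bar a_\lambda\bar u_\lambda})=\frac{a_\sigma u_\lambda}{u_\sigma}$ recorded in subsection \ref{ROC4HomolPt}. \textbf{The main obstacle} is the $d^2$ differential: identifying its source and target precisely, proving it is nonzero (independently of the machine computation), and checking that nothing survives to obstruct the free description of $E_\infty$ and force the exact divisibility pattern of $e^u$ and $e^\lambda$ — everything downstream (ring structure, middle/bottom levels, Mackey structure) is then bookkeeping modulo the known structure of $k^{\bigstar}$.
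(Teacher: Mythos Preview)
Your overall strategy matches the paper's, but you have misidentified the cellular filtration, and this error propagates through the rest of the sketch.

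The associated graded of the filtration of $B_{C_4}\Sigma_{2+}$ is \emph{not} a collection of representation spheres indexed by multiples of $\lambda$. As established in section~\ref{BC4S2dec}, the filtration repeats with period $\rho=1+\sigma+\lambda$ and the four pieces in each period are
\[
gr_{4j}=S^{j\rho},\quad gr_{4j+1}=S^{j\rho+\sigma},\quad gr_{4j+2}=\Sigma^{j\rho+\lambda}C_4/C_{2+},\quad gr_{4j+3}=\Sigma^{j\rho+\lambda+1}C_4/C_{2+}.
\]
The last two are \emph{induced} cells, not representation spheres. Consequently the $E_1$-page is
\[
E_1^{\bigstar,*}=k^{\bigstar}\{e^{j\rho},e^{j\rho+\sigma}\}\oplus (k^{\bigstar})_{C_4/C_2}\{e^{j\rho+\lambda},e^{j\rho+\lambda+1}\},
\]
and the second summand is \emph{not} free over $k^{\bigstar}$: on the top level it is $k^{\bigstar}_{C_2}$ viewed as a $k^{\bigstar}_{C_4}$-module via restriction, which the paper notes explicitly is not even cyclic (it is $(k^{\bigstar}_{C_4}[u_\sigma^{-1}]/a_\sigma)\{1,\sqrt{\bar a_\lambda\bar u_\lambda}\}$). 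This is precisely where the $u_\sigma$-divisibility of $e^u,e^\lambda$ comes from, and also the source of the extra middle-level generators $e^{j,au}$ and $e^{'j\rho+\lambda+1}$ --- not from any analogy with the $a_\sigma$-divisibility of $x_{n,1}$ or $s$ in the ground ring. Your claim that ``the $E_1$-page is a free $k^{\bigstar}_{C_4}$-module'' is therefore false, and with it the justification that the map to $k^{h\bigstar}$ is injective after inverting $u_\sigma$ ``by the freeness of the $E_\infty$-page.''

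You also substantially underestimate the work after the differentials are resolved. The paper spends most of section~\ref{BC4S2ss} on: (i) constructing \emph{coherent} lifts $e^{j,u}/u_\sigma^i$, $e^{j\rho+\lambda}/u_\sigma^i$ from $E_\infty$ to the actual cohomology, singled out by the condition $a_\sigma^2\cdot(-)=0$ (Proposition~\ref{HalfCoh}); (ii) proving the exotic restriction $\Res^4_2(e^{j,a})=\tilde e^{j,a}+\bar u_\sigma e^{'j\rho+\lambda+1}$ (Lemma~\ref{ResOfea}), which is needed both to show the $d^2$ is nonzero and to get the Mackey structure; (iii) modifying the generators so that the localization map takes the clean form $e^u\mapsto u_\sigma u_\lambda w$, $e^\lambda\mapsto u_\lambda w^2$, etc.\ (Proposition~\ref{ModifyLocalization}), including a Bockstein argument to pin down the coefficients in $e^\rho\mapsto u_\sigma u_\lambda w^4+a_\sigma u_\lambda w^3+u_\sigma a_\lambda w^2+a_\sigma a_\lambda w$; and (iv) ruling out the $\theta$-correction terms in each multiplicative relation, which requires either transfer-and-Frobenius arguments or a filtration argument, not merely the homotopy-fixed-point comparison (since those terms lie in the kernel of $k\to k^h$). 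None of this is bookkeeping; each step uses specific features of $k^{\bigstar}_{C_4}$ and the induced-cell structure that your sketch does not account for.
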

Note: For every quotient $y/x$ there is a defining relation $x\cdot (y/x)=y$. We have omitted these implicit module relations from the description above.\medbreak

The best description of the middle level is in terms of the generators $c,b$ of $$k^{\bigstar'}_{C_2}(B_{C_2}\Sigma_{2+})=\frac{k^{\bigstar'}_{C_2}[c,b]}{c^2=a_{\sigma_2}c+u_{\sigma_2}b}$$
Here, $\bigstar'$ ranges in $RO(C_2)$ and to get $k^{\bigstar}_{C_2}(B_{C_4}\Sigma_{2+})$ for $\bigstar$ in $RO(C_4)$, we have to restrict to $RO(C_2)$ representations of the form $n+2m\sigma_2$. In this way, $$k^{\bigstar}_{C_2}(B_{C_4}\Sigma_{2+})=k^{\bigstar'}_{C_2}(B_{C_2}\Sigma_{2+})[\bar u_\sigma^{\pm}]$$
where $\bigstar=n+m\sigma+k\lambda$ in $RO(C_4)$ corresponds to $\bigstar'=n+m+2k\sigma_2$ in $RO(C_2)$. The correspondence of generators is:
\begin{align}
	\bar e^a&=\bar u_{\sigma}(a_{\sigma_2}b+bc)\\
	\bar e^u&=\bar u_{\sigma}u_{\sigma_2}c\\
	\bar e^\lambda&=c^2\\
	\bar e^\rho&=\bar u_{\sigma}b^2
\end{align}

We can also express the map to homotopy fixed points in terms of our generators:
 
  \begin{prop}There is a choice of the degree $1$ element $w$ in 
 	\begin{equation}
 		k^{hC_4\bigstar}(B_{C_4}\Sigma_{2+})=k^{hC_4\bigstar}[w]
 	\end{equation}
so that the localization map $k^{\bigstar}_{C_4}(B_{C_4}\Sigma_{2+})\to	k^{hC_4\bigstar}(B_{C_4}\Sigma_{2+})$ induced by $k\to k^h$ and inverting $u_\sigma,u_\lambda$ is:
	\begin{align}
	e^u&\mapsto u_{\sigma}u_{\lambda}w\\
	e^{\lambda}&\mapsto u_{\lambda}w^2\\
	e^a&\mapsto u_{\sigma}u_{\lambda}w^3+u_{\sigma}a_{\lambda}w\\
	e^{\rho}&\mapsto u_{\sigma}u_{\lambda}w^4+a_\sigma u_\lambda w^3+u_{\sigma}a_{\lambda}w^2+a_\sigma a_\lambda w
\end{align}
\end{prop}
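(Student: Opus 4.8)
The plan is to compute $k^{hC_4\bigstar}(B_{C_4}\Sigma_{2+})$ via a homotopy fixed point spectral sequence, as in the $C_2$ case of subsection~\ref{BC2S2}, and then read the localization map $\phi\colon k^{\bigstar}_{C_4}(B_{C_4}\Sigma_{2+})\to k^{hC_4\bigstar}(B_{C_4}\Sigma_{2+})$ off the relations of Proposition~\ref{BiggestProposition}. First I would establish that $k^{hC_4\bigstar}(B_{C_4}\Sigma_{2+})=k^{hC_4\bigstar}[w]$ for a class $w$ of cohomological degree $1$. The spectral sequence has $E_2$-page the polynomial ring $k^{hC_4\bigstar}[w]$ (the Weyl action on the underlying cohomology $H^*(\R P^{\infty};k)=k[w]$ is trivial since $H^1$ is one-dimensional), and it collapses because $w$ is a permanent cycle detected on underlying cohomology: since $k^{\bigstar}_e(B_{C_4}\Sigma_{2+})=k^{\bigstar}_e[\Res^4_1 e^u]$ with $\Res^4_1 e^u\neq 0$, and since $k\to k^h$ is an underlying equivalence, the nonzero class $\phi(e^u)$ cannot lie in positive filtration, so its filtration-$0$ leading term — which a degree count identifies as $u_\sigma u_\lambda w$ — is nonzero, forcing $w$ to survive. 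As $E_\infty$ is then the free $k^{hC_4\bigstar}$-algebra on the filtration-$0$ class $w$, there are no additive or multiplicative extension problems. The same underlying equivalence $k\to k^h$ shows $\Res^4_1\circ\phi=\Res^4_1$; and since $u_\sigma,u_\lambda$ are units in $k^{hC_4}_{\bigstar}$, the ring map $\phi$ inverts them and is therefore determined by $\phi(e^u),\phi(e^\lambda),\phi(e^a),\phi(e^\rho)$.

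Next I would normalise $\phi(e^u)$. In $k^{hC_4\bigstar}[w]=k[a_\sigma,u_\sigma^{\pm},a_\lambda,u_\lambda^{\pm},w]/(a_\sigma^2)$ the only monomials of cohomological degree $\sigma+\lambda-2$ are $u_\sigma u_\lambda w$ and $a_\sigma u_\lambda$, so $\phi(e^u)=u_\sigma u_\lambda w+\epsilon\,a_\sigma u_\lambda$ with $\epsilon\in k$, the coefficient of $u_\sigma u_\lambda w$ being $1$ by the previous paragraph. Since $a_\sigma u_\sigma^{-1}$ spans the degree-$1$ part of $k^{hC_4}_{\bigstar}$, the element $w'=w+\epsilon\,a_\sigma u_\sigma^{-1}$ is again a polynomial generator of $k^{hC_4\bigstar}(B_{C_4}\Sigma_{2+})$ over $k^{hC_4}_{\bigstar}$; replacing $w$ by $w'$ we may assume $\phi(e^u)=u_\sigma u_\lambda w$.

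The remaining images then follow formally from the multiplicative relations in $S$, using that $u_\sigma,u_\lambda$ are invertible on the target: the relation $(e^u)^2=u_\sigma^2u_\lambda e^\lambda$ forces $\phi(e^\lambda)=u_\lambda w^2$; then $e^\lambda e^u=u_\lambda e^a+a_\lambda e^u$ forces $\phi(e^a)=u_\sigma u_\lambda w^3+u_\sigma a_\lambda w$; and then $e^ae^u=u_\sigma u_\lambda e^\rho+a_\sigma u_\lambda e^a$ forces $\phi(e^\rho)=u_\sigma u_\lambda w^4+a_\sigma u_\lambda w^3+u_\sigma a_\lambda w^2+a_\sigma a_\lambda w$, which are the asserted formulas; the other relations of $S$ (using $a_\sigma^2=0$) are then automatically respected and serve as a consistency check. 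The main obstacle is the first step — justifying the collapse of the homotopy fixed point spectral sequence and the vanishing of extension problems; once $k^{hC_4\bigstar}(B_{C_4}\Sigma_{2+})=k^{hC_4\bigstar}[w]$ is in place and $\phi(e^u)$ has been normalised, the rest is a purely algebraic consequence of the ring structure recorded in Proposition~\ref{BiggestProposition}.
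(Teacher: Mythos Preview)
Your algebraic derivation is clean and correct: once $k^{hC_4\bigstar}(B_{C_4}\Sigma_{2+})=k^{hC_4\bigstar}[w]$ is known and $w$ is normalized so that $\phi(e^u)=u_\sigma u_\lambda w$, the multiplicative relations $(e^u)^2=u_\sigma^2u_\lambda e^\lambda$, $e^\lambda e^u=u_\lambda e^a+a_\lambda e^u$, and $e^ae^u=u_\sigma u_\lambda e^\rho+a_\sigma u_\lambda e^a$ from Proposition~\ref{BiggestProposition} force the remaining images exactly as you compute.

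The difficulty is one of logical order. In the paper, the multiplicative relations you invoke are \emph{proved using} this very localization map: in the final subsection the generic expression $\frac{e^u}{u_\sigma^i}\frac{e^u}{u_\sigma^j}=\epsilon_0 a_\lambda\frac{u_\lambda}{u_\sigma^{i+j-2}}+\epsilon_1 a_\sigma\frac{u_\lambda}{u_\sigma^{i+j}}e^u+\epsilon_2\frac{u_\lambda}{u_\sigma^{i+j-2}}e^\lambda+\cdots$ is resolved by mapping to homotopy fixed points, and similarly for the other products. So citing Proposition~\ref{BiggestProposition} to prove the localization formulas is circular within the paper's architecture. If you want your argument to stand on its own you would need to establish at least the three relations above independently---restriction to the middle level pins down most coefficients (e.g.\ $\epsilon_0,\epsilon_2$ in the first relation), but the $a_\sigma$-coefficients such as $\epsilon_1$ are invisible under $\Res^4_2$ and need a separate argument.

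The paper avoids this by working the other way around: it first constrains $\phi$ by restricting to the middle level (where the $C_2$ answer for $B_{C_2}\Sigma_2$ is already known), obtaining $\phi(e^\rho)=u_\sigma u_\lambda w^4+\epsilon_4 a_\sigma u_\lambda w^3+u_\sigma a_\lambda w^2+\epsilon_5 a_\sigma a_\lambda w$ etc., absorbs the ambiguities for $e^u,e^\lambda,e^a$ into a redefinition of those generators, and then determines $\epsilon_4=\epsilon_5=1$ by applying the Bockstein $\beta$ together with $\beta(e^\rho)=0$ and $\beta(w)=w^2$. Your approach is slicker once the ring structure is in hand, and normalizing $w$ rather than the $e$'s matches the Section~5 statement more directly; but the paper's route via restriction and Bockstein is what makes the argument non-circular.
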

 
 \begin{prop}\label{NonFlatnessIs}
 	The module $k^{\bigstar}(B_{C_4}\Sigma_{2+})$ is not flat over $k^{\bigstar}$.\end{prop}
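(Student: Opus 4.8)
The plan is to detect the failure of flatness through the module relation $a_\sigma^2 e^u=0$ — the case $i=j=0$ of $\tfrac{a_\sigma^2}{a_\lambda^j}\tfrac{e^u}{u_\sigma^i}=0$ in Proposition~\ref{BiggestProposition} — together with the Gold relation $a_\sigma^2 u_\lambda=0$. Write $R=k^{\bigstar}$ and $M=k^{\bigstar}(B_{C_4}\Sigma_{2+})$. If $M$ were flat, applying the exact functor $-\boxm_R M$ to the exact sequence $0\to\operatorname{Ann}_R(a_\sigma^2)\to R\xrightarrow{a_\sigma^2}R$ would give an exact sequence $0\to\operatorname{Ann}_R(a_\sigma^2)\boxm_R M\to M\xrightarrow{a_\sigma^2}M$, hence $\operatorname{Ann}_M(a_\sigma^2)=\operatorname{Ann}_R(a_\sigma^2)\cdot M$ (equivalently $\Tor_1^R(R/(a_\sigma^2),M)=0$). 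Since $e^u\in\operatorname{Ann}_M(a_\sigma^2)$, flatness forces $e^u\in\big(\operatorname{Ann}_R(a_\sigma^2)\cdot M\big)(C_4/C_4)$, and the rest of the argument is devoted to contradicting this.

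The first step is to bound $\operatorname{Ann}_R(a_\sigma^2)$ from the description of $k^{\bigstar}$ in Subsection~\ref{ROC4HomolPt}. Since $\Res^4_2 a_\sigma=0$, the class $a_\sigma^2$ acts as zero on the middle and bottom levels, so $\operatorname{Ann}_R(a_\sigma^2)$ is all of $R$ there. On the top level, split $k^{C_4}_{\bigstar}(S^0)$ into its positive cone and its negative cone (the span of the $\theta$-divisible classes, which is an ideal). Multiplication by $a_\sigma^2$ is injective on the positive cone modulo the ideal generated by the $u_\lambda/u_\sigma^i$ — the only positive-cone relations it kills being the Gold relation and $\tfrac{u_\lambda}{u_\sigma^i}\tfrac{a_\sigma^2}{a_\lambda^j}=0$ — so $\operatorname{Ann}_R(a_\sigma^2)(C_4/C_4)\subseteq\big(u_\lambda/u_\sigma^i:i\ge0\big)+(\text{negative cone})$. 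Feeding this into the box product at $C_4/C_4$ via Frobenius reciprocity gives
\[
\big(\operatorname{Ann}_R(a_\sigma^2)\cdot M\big)(C_4/C_4)\ \subseteq\ \sum_i\tfrac{u_\lambda}{u_\sigma^i}\,M(C_4/C_4)\;+\;(\text{negative cone})\cdot M(C_4/C_4)\;+\;\Tr_2^4\big(M(C_4/C_2)\big),
\]
the transfer term occurring because the whole middle level lies inside $\operatorname{Ann}_R(a_\sigma^2)$.

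The second step is to restrict this inclusion along $\Res^4_2$ and then kill the negative cone at the middle level. The point is that: $\Res^4_2(u_\lambda/u_\sigma^i)=\bar u_\lambda\bar u_\sigma^{-i}=u_{\sigma_2}^2\bar u_\sigma^{-i}$ with $\bar u_\sigma$ a unit there, so the first summand restricts into $u_{\sigma_2}^2\,M(C_4/C_2)$; $\Res^4_2$ carries the negative cone of $k^{C_4}_{\bigstar}(S^0)$ into $0$ together with the negative cone of $k^{C_2}_{\bigstar}(S^0)$ (for instance $\Res^4_2\theta=0$ and $\Res^4_2 s=\bar s$), so the second summand dies after the negative cone is quotiented out; and $\Res^4_2\circ\Tr_2^4=1+g=2=0$ because the Weyl group $C_4/C_2$ acts trivially on $M(C_4/C_2)$ by Proposition~\ref{BiggestProposition}. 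Consequently, in the quotient of $M(C_4/C_2)$ by the negative cone of the point, the image of $e^u$ lies in the ideal generated by $\bar u_\lambda=u_{\sigma_2}^2$. But that image is the class of $\Res^4_2 e^u=\bar e^u=\bar u_\sigma u_{\sigma_2}c$, and in view of $k^{\bigstar'}_{C_2}(B_{C_4}\Sigma_{2+})=k^{\bigstar'}_{C_2}(B_{C_2}\Sigma_{2+})[\bar u_\sigma^{\pm}]$ — where $B_{C_2}\Sigma_2$'s cohomology is free over the point, on the monomials $\{b^j,cb^j\}$, by Subsection~\ref{BC2S2} — this class is divisible by $u_{\sigma_2}$ exactly once (namely $\bar u_\sigma u_{\sigma_2}c=u_{\sigma_2}\cdot(\bar u_\sigma c)$ with $\bar u_\sigma$ a unit), hence not by $u_{\sigma_2}^2$. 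This contradiction proves $e^u\notin\big(\operatorname{Ann}_R(a_\sigma^2)\cdot M\big)(C_4/C_4)$, so $M$ is not flat.

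The parts I expect to require the most care are the bound on $\operatorname{Ann}_R(a_\sigma^2)$ on the top level and the claim that $\Res^4_2$ sends the negative cone of the point into ($0$ and) the negative cone of $k^{C_2}_{\bigstar}(S^0)$; both are bookkeeping with the explicit descriptions in Subsection~\ref{ROC4HomolPt} (and Appendix~\ref{AppendixPoint}) and in Proposition~\ref{BiggestProposition}. The remainder is formal, and the whole obstruction can be repackaged as the non-vanishing of $\Tor_1^{k^{\bigstar}}\!\big(k^{\bigstar}/(a_\sigma^2),\,k^{\bigstar}(B_{C_4}\Sigma_{2+})\big)$, with $e^u$ furnishing the nonzero class.
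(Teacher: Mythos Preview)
Your argument is correct and follows a route parallel to, but distinct from, the paper's. Both proofs detect non-flatness by showing that a specific annihilated element cannot lie in the expected submodule, but they differ in the element of $R$ chosen and in how the reduction to the middle level is organized.

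The paper multiplies by $a_\sigma^2/a_\lambda$ rather than by $a_\sigma^2$. The advantage of this choice is that $\Res^4_2(a_\sigma^2/a_\lambda)=v\bar u_\sigma^2$, and since the restriction functor $\Res^4_2$ from $C_4$-Mackey modules to $C_2$-Mackey modules is exact and symmetric monoidal, the entire flatness question is transported to $C_2$ in one stroke: one need only show $\Res^4_2 M$ is not flat over $\Res^4_2 R$. There the kernel of multiplication by $v$ is immediate from the $C_2$ description (no bounding of an annihilator at the $C_4$ level is needed), and the contradiction is produced by the element $a_{\sigma_2}b=e^{\prime\lambda+1}$ rather than $\bar e^u$. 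This sidesteps precisely the two pieces of bookkeeping you flag as delicate---pinning down $\operatorname{Ann}_R(a_\sigma^2)(C_4/C_4)$ from Appendix~\ref{AppendixPoint} and tracking how the negative cone restricts.

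Your approach, by contrast, works directly with $a_\sigma^2$ and the Gold relation, handles the box product at the top level via Frobenius reciprocity (whence your transfer term $\Tr_2^4(M(C_4/C_2))$), and then kills that term using the trivial Weyl action. It is a little longer on verification but very transparent: the obstruction is exactly that $e^u$ is $a_\sigma^2$-torsion without being a $u_\lambda$-multiple, a transfer, or a negative-cone contribution. Either proof is fine; the paper's is shorter because the symmetric monoidality of $\Res^4_2$ front-loads the reduction to $C_2$.
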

 
 \begin{proof}Let $R=k^{\bigstar}$ and $M=k^{\bigstar}(B_{C_4}\Sigma_{2+})$. Consider the map $f:R\to \Sigma^{2\sigma-\lambda}R$ given on top level by multiplication with $a_{\sigma}^2/a_{\lambda}$ and determined on the lower levels by restricting (so it's multiplication with $v\bar u_{\sigma}^2$ on the middle level and $0$ on the bottom level). If $M$ is a flat $R$-module then we have an exact sequence
 	\begin{equation}
 	0\to M\boxtimes_R Ker(f) \to M\xrightarrow{f}\Sigma^{2\sigma-\lambda}M
 	\end{equation}
 	The restriction functor $\Res_2^4$ from $R$ modules to $\Res_2^4R$ modules is exact and symmetric monoidal, so we replace $M,R, Ker(f)$ by $\Res^4_2M, \Res^4_2R, \Res^4_2Ker(f)$ respectively and have an exact sequence of $C_2$ Mackey functors. Using the notation involving the $C_2$ generators $c,b$ and writing $a=a_{\sigma_2}, u=u_{\sigma_2}$, we have $M=\oplus_{i\ge 0}R\{b^{2i},cb^{2i+1}\}\oplus \oplus_{i\ge 0} R\{ab^{2i+1},ub^{2i+1},acb^{2i},ucb^{2i}\}/\sim$. The map $f$ maps each summand to itself, so we may replace $M$ by $R\{c,ab,ub,acb,ucb\}/\sim$ and continue to have the same exact sequence as above. The top level then is:
 	\begin{equation}
 	0\to (M\boxtimes_R Ker(f))(C_2/C_2) \to M(C_2/C_2) \xrightarrow{v\bar u_{\sigma}^2}M(C_2/C_2) 
 	\end{equation}
 	and $v$ acts trivially on $ab,ub,ac,uc$ i.e. on $M(C_2/C_2)$ so we get
 	\begin{equation}
 	(M\boxtimes_RKer(f))(C_2/C_2)=M(C_2/C_2) 
 	\end{equation}
 	We compute directly from definition that $(M\boxtimes_RKer(f))(C_2/C_2) $ is isomorphic to $M(C_2/C_2) \otimes_{R(C_2/C_2) }I$ where $I:=Ker(R\xrightarrow{v}R)$. But $M(C_2/C_2) \otimes_{R(C_2/C_2) }I\to M(C_2/C_2) $ has image $IM(C_2/C_2) $ hence 
 	\begin{equation}IM(C_2/C_2) =M(C_2/C_2) \end{equation}
 	This contradicts that $ab=e^{'\lambda+1}$ is not divisible by any element of the ideal $I$ ($e^{'\lambda+1}$ is only divisible by $\bar u_{\sigma}^{\pm i}\in R$ which are not in $I$).
 \end{proof}
 \section{\texorpdfstring{A cellular decomposition of $B_{C_4}\Sigma_2$}{A cellular decomposition of the C4 classifying space of O(1)}}\label{BC4S2dec}
 
We denote the generators of $C_4$ and $\Sigma_2$ by $g$ and $h$ respectively; let also $\tau$ be the sign representation of $\Sigma_2$ and $\rho=1+\sigma+\lambda$ the regular representation of $C_4$.
 
 The graph subgroups of $C_4\times \Sigma_2$ are $C_4=\ev{g},C_2=\ev{g^2},\Delta=\ev{gh},\Delta'=\ev{g^2h},e$. 
 
 We thus have a model for the universal space:
 \begin{equation}
 E_{C_4}\Sigma_2=S(\infty(\rho\otimes \tau))
 \end{equation}
 and $B_{C_4}\Sigma_2$ is $\mathbb R P^{\infty}$ with nontrivial $C_4$ action:
 \begin{equation}
 g(x_1,x_2,x_3,x_4,...)=(x_1,-x_2,-x_4,x_3,...)
 \end{equation}
 
 $S(\infty (\rho\otimes \tau))$ is the space
 \begin{equation}
 S(\infty)=\{(x_n):\text{ finitely supported and }\sum_ix_i^2=1\}
 \end{equation}
 with $C_4\times \Sigma_2$ action
 \begin{gather}
 g(x_1,x_2,x_3,x_4,x_5,...)=(x_1,-x_2,-x_4,x_3,x_5,...)\\
 h(x_1,x_2,...)=(-x_1,-x_2,...)
 \end{gather}

We shall use the notation $(x_1,...,x_n)$ for the point $(x_1,...,x_n,0,0,...)\in S(\infty)$. Moreover, the subspace of $S(\infty)_+$ where only $x_1,...,x_n$ are allowed to be nonzero shall be denoted by $\{(x_1,...,x_n)\}$.

We now describe a cellular decomposition of $E_{C_4}\Sigma_{2+}$ where the orbits are $C_4\times \Sigma_{2+}/H \wedge S^V$ where $V$ is a $C_4$ representation.
 \begin{itemize}
 	\item Start with $\{(x_1)\}$ the union of two points $(1),(-1)$ and the basepoint. This is $C_4\times \Sigma_2/C_{4+}$.
 	\item $\{(x_1)\}$ includes in $\{(x_1,x_2)\}=S(1+\sigma)_+$. The cofiber is the wedge of two circles, corresponding to $x_2$ being positive or negative, and the action is
 	\begin{equation}
 	g(x_1,+)=(x_1,-)\text{ , }h(x_1,+)=(-x_1,-)
 	\end{equation}
 	After applying the self equivalence given by $f(x_1,+)=(x_1,+)$ and $f(x_1,-)=(-x_1,-)$, the action becomes
 	 	\begin{equation}
 	g(x_1,+)=(-x_1,-)\text{ , }h(x_1,+)=(x_1,-)
 	\end{equation}
 	This is exactly $C_4\times \Sigma_2/\Delta_+\wedge S^{\sigma}$.

 	\item $\{(x_1,x_2)\}$ includes in $\{(x_1,x_2,x_3,0),(x_1,x_2,0,x_4)\}$; the cofiber is the wedge of four spheres corresponding to the sign of the nonzero coordinate among the last two coordinates. If we number the spheres from $1$ to $4$ and use $(x,y)^i$ coordinates to denote them $i=1,2,3,4$ then
 	\begin{equation}
 	g(x,y)^i=(x,-y)^{i+1}\text{ , }h(x,y)^i=(-x,-y)^{i+2}
 	\end{equation}
 	Applying the self equivalence
 	\begin{equation}
 	f(x,y)^1=(x,y)^1\text{ , }f(x,y)^2=(-y,x)^2\text{ , }f(x,y)^3=(-x,-y)^3\text{ , }f(x,y)^4=(y,-x)^4
 	\end{equation}
 	the action becomes $g(x,y)^i=(-y,x)^{i+1}$ and $h(x,y)^i=(x,y)^{i+2}$ i.e. we have  $C_4\times \Sigma_2/\Delta'_+\wedge S^{\lambda}$.

 	\item $\{(x_1,x_2,x_3,0),(x_1,x_2,0,x_4)\}$ includes in $\{(x_1,x_2,x_3,x_4)\}=S(\rho\otimes \tau)$ and the cofiber is the wedge of four $S^3$'s corresponding to the signs of $x_3,x_4$. Analogously to the item above, we get the space $C_4\times \Sigma_2/\Delta'_+\wedge S^{1+\lambda}$.
 	
 	\item The process now repeats:  $\{(x_1,x_2,x_3,x_4)\}$ includes in $\{(x_1,x_2,x_3,x_4,x_5)\}$ and the cofiber is the wedge of two $S^4$'s corresponding to the sign of $x_5$ and we get $C_4\times \Sigma_2/C_{4+}\wedge S^{1+\sigma+\lambda}$. And so on...
 	
 \end{itemize}  
 
 We get the decomposition of $B_{C_4}\Sigma_{2+}$ where the associated graded is:
 \begin{gather}
 gr_{4j}=S^{j\rho}\\
 gr_{4j+1}=S^{j\rho+\sigma}\\
 gr_{4j+2}=\Sigma^{j\rho+\lambda}C_4/C_{2+}\\
 gr_{4j+3}=\Sigma^{j\rho+1+\lambda}C_4/C_{2+}
 \end{gather}
 This filtration gives a spectral sequence of $k^{\bigstar}$ modules converging to $k^{\bigstar}(B_{C_4}\Sigma_{2+})$ that we shall analyze in the next section.
 
 \subsection{A decomposition using trivial spheres}\label{CWDecompOrbits}
 The cellular decomposition of $B_{C_4}\Sigma_{2}$ we just established, consists of one cell in every dimension, whereby "cell" we mean a space of the form $(C_4/H)_+\wedge S^V$ where $H$ is a subgroup of $C_4$ and $V$ is a real non-virtual $C_4$-representation; let us call this a "type I" decomposition. It is also possible to obtain a decomposition using only "trivial spheres", namely with cells of the form $(C_4/H)_+\wedge S^n$; we shall refer to this as a "type II" decomposition. A type I decomposition can be used to produce a type II decomposition by using the type II decompositions of each type I cell $(C_4/H)_+\wedge S^V$. This is useful for computer-based calculations, since type II decompositions lead to chain complexes as opposed to spectral sequences ($k_*((C_4/H)_+\wedge S^V)$ is concentrated in a single degree if and only if $V$ is trivial). Equipped with a type I decomposition, the computer program of \cite{Geo19} can calculate the additive structure of $k^{\bigstar}(B_{C_4}\Sigma_{2+})$ in a finite range (this can be helpful with our spectral sequence calculations: see Remark \ref{ProgramDifferential}).
 
 We note however that the type II decomposition of $B_{C_4}\Sigma_2$ obtained this way is rather inefficient and not minimal. For example, consider the subspace spanned by homogeneous coordinates $\{(x_1:x_2:x_3:x_4)\}$; this is obtained from the subspace $\{(x_1:x_2:0:x_4),(x_1:x_2:x_3:0)\}$ by attaching a cell $(C_4/C_2)_+\wedge S^{1+\lambda}$. The sphere $S^{1+\lambda}$ itself has a top dimensional cell $C_{4+}\wedge S^3$; combining the two we get $(C_4/C_2)_+\wedge C_{4+}\wedge S^3=(C_{4+}\vee C_{4+})\wedge S^3$ i.e. 2 cells in dimension 3. On the other hand, $\{(x_1:x_2:x_3:x_4)\}$ can also be obtained from the subspace $\{(x_1:x_2:0:x_4),(x_1:x_2:x_3:0),(x_1:0:x_3:x_4)\}$ by attaching $C_{4+}\wedge S^3$ i.e. 1 cell in dimension 3. Working out the lower dimensions, we can see that the type II decomposition of $\R P^{\sigma+\lambda}=\{(x_1:x_2:x_3:x_4)\}$ obtained by expanding the type I decomposition has 12 total cells, while it is possible to obtain a more efficient type I decomposition with 9 total cells.

 \section{\texorpdfstring{The spectral sequence for $B_{C_4}\Sigma_2$}{The spectral sequence for the C4 classifying space of O(1)}}\label{BC4S2ss}
 
 Applying $k^{\bigstar}$ on the filtration of $B_{C_4}\Sigma_{2+}$ gives a spectral sequence \begin{equation}
 	E_1^{s,V}=k^{V}gr_s\implies k^VB_{C_4}\Sigma_{2+}
 \end{equation}
 The differential $d^r$ has $(V,s)$ bidegree $(1,r)$ so it goes $1$ unit to the right and $r$ units up in $(V,s)$ coordinates.\medbreak
 
 Before we can write down the $E_1$ page, we will need some notation: For a $G$-Mackey functor $M$ and subgroup $H\subseteq G$, $M_{G/H}$ denotes the $G$-Mackey functor defined on orbits as $M_{G/H}(G/K)=M(G/H\times G/K)$; the restriction, transfer and Weyl group action in $M_{G/H}$ are induced from those in $M$.  For $G=C_4$ and $H=C_2$, the bottom level of $M_{C_4/C_2}$ is: $$M_{C_4/C_2}(C_4/e)=M(C_4/e\times C_4/C_2)=M(C_4/e)\oplus M(C_4/e)=M(C_4/e)\{x,y\}$$
 where $x,y$ are used to distinguish the two copies of $M(C_4/e)$, i.e. so that any element of $M_{C_4/C_2}(C_4/e)$ can be uniquely written as $mx+m'y$ for $m,m'\in M(C_4/e)$. The Weyl group $W_{C_4}e=C_4$ acts as
 $$g(mx+m'y)=(gm)(gx)+(gm')(gy)=(gm)y+(gm')x$$
 i.e. $y=gx$ for a fixed generator $g\in C_4$.
 
 We can then describe $M_{C_4/C_2}$ in terms of $M$ and the computation of the restriction and transfer on $x$, which are shown in the following diagram:
 \begin{equation}
 	M_{C_4/C_2}=\begin{tikzcd}
 		M(C_4/C_2)\{x+gx\}\ar[d, "x+gx\mapsto x+gx" left, bend right]\\
 		M(C_4/C_2)\{x,gx\}\ar[u, "x\mapsto x+gx" right,bend right]\ar[d, "x\mapsto x" left, bend right]\ar[loop right, "C_4/C_2" right]\\
 		M(C_4/e)\{x,gx\}\ar[u, "x\mapsto 0" right,bend right]\ar[loop right, "C_4" right]
 	\end{tikzcd}
 \end{equation}
If $M=R$ is a Green functor, then $R_{C_4/C_2}$ is an $R$-module. Its top level, namely $R(C_4/C_2)\{x+gx\}$, is an $R(C_4/C_4)$ module via extension of scalars along the restriction map $\Res^4_2:R(C_4/C_4)\to R(C_4/C_2)$.

 \subsection{\texorpdfstring{The $E_1$ page}{The E1 page}} \label{E1}
 The rows in the $E_1$ page are:
 \begin{gather}
 E_1^{\bigstar,4j}=k^{\bigstar-j\rho}\\
 E_1^{\bigstar,4j+1}=k^{\bigstar-j\rho-\sigma}\\
 E_1^{\bigstar,4j+2}=(k^{\bigstar-j\rho-\lambda})_{C_4/C_2}\\
 E_1^{\bigstar,4j+3}=(k^{\bigstar-j\rho-\lambda-1})_{C_4/C_2}
 \end{gather}

 We will write $e^{j\rho},e^{j\rho+\sigma},e^{j\rho+\lambda},e^{j\rho+\lambda+1}$ for the unit elements corresponding to the $E_1$ terms above, living in degrees $V=j\rho,j\rho+\sigma,j\rho+\lambda,j\rho+\lambda+1$ and filtrations $s=4j,4j+1,4j+2,4j+3$ respectively. We also write $\bar e^V, \overline{\bar e}^V$ for their restrictions to the middle and bottom levels respectively. In this way: 
\begin{equation}
E_1^{\bigstar,*}=k^{\bigstar}\{e^{j\rho},e^{j\rho+\sigma}\}\oplus (k^{\bigstar})_{C_4/C_2}\{e^{j\rho+\lambda},e^{j\rho+\lambda+1}\}
\end{equation} 
and the three levels of the Mackey functor $E_1^{\bigstar,*}$, from top to bottom, are:
\begin{gather}
k^{\bigstar}_{C_4}\{e^{j\rho},e^{j\rho+\sigma}\}\oplus k^{\bigstar}_{C_2}\{e^{j\rho+\lambda}(x+gx),e^{j\rho+\lambda+1}(x+gx)\} \\
k^{\bigstar}_{C_2}\{\bar e^{j\rho},\bar e^{j\rho+\sigma}\}\oplus k^{\bigstar}_{C_2}\{\bar e^{j\rho+\lambda}x, \bar e^{j\rho+\lambda}gx, \bar e^{j\rho+\lambda+1}x, \bar e^{j\rho+\lambda+1}gx\}\\
k^{\bigstar}_{e}\{\overline{\bar e}^{j\rho},\overline{\bar e}^{j\rho+\sigma}\}\oplus k^{\bigstar}_e\{\overline{\bar e}^{j\rho+\lambda}x, \overline{\bar e}^{j\rho+\lambda}gx, \overline{\bar e}^{j\rho+\lambda+1}x, \overline{\bar e}^{j\rho+\lambda+1}gx\}
\end{gather}
For the top level, $k_{C_2}^{\bigstar}$ is a $k_{C_4}^{\bigstar}$ module through the restriction $\Res^4_2:k_{C_4}^{\bigstar}\to k_{C_2}^{\bigstar}$:
 \begin{equation}
 k^{\bigstar}_{C_2}=\frac{k^{\bigstar}_{C_4}[u_{\sigma}^{-1}]}{a_{\sigma}}\{1,\sqrt{\bar a_{\lambda}\bar u_{\lambda}}\}
 \end{equation}
It's important to note that this is \emph{not} a cyclic $k_{\bigstar}^{C_4}$ module.\medbreak
 
At this point, the reader may want to look over pictures of the $E_1$ page that we have included in the Appendix \ref{AppendixSS}.

\subsection{\texorpdfstring{The $d^1$ differentials}{The d1 differentials}}\label{d1comp} In this subsection, we explain how the $d^1$ differentials on each level are computed. We shall need this crucial remark:

\begin{rem}\label{Remark}The restriction of the $C_4$ action on $B_{C_4}\Sigma_2$ to $C_2\subseteq C_4$ results in a $C_2$ space equivalent to $B_{C_2}\Sigma_2$. The equivariant cohomology of this space is known from subsection \ref{BC2S2} and we shall use this result to compute the middle level spectral sequence for $B_{C_4}\Sigma_2$. Further restricting to the trivial group $e\subseteq C_4$, we get the nonequivariant space $\R P^{\infty}$ and this will be used to compute the bottom level spectral sequence.
\end{rem}

First of all, the bottom level spectral sequence is concentrated on the diagonal and the nontrivial $d^1$'s are $k\{x,gx\}\to k\{x,gx\}$, $x\mapsto x+gx$ (since $k^*(\R P^{\infty})$ is $k$ in every nonnegative degree).\medbreak

 The $d^1$'s on middle and top level are computed from the fact that they are $k^{\bigstar}$ module maps, hence determined on $$e^{j\rho},e^{j\rho+\sigma},\bar u_\sigma^{-i}\sqrt{\bar a_\lambda\bar u_\lambda}^\epsilon e^{j\rho+\lambda+\epsilon'}(x+gx)$$
 for the top level ($\epsilon,\epsilon'=0,1$), and on $$\bar e^{j\rho},\bar e^{j\rho+\sigma},\bar e^{j\rho+\lambda}x,\bar e^{j\rho+\lambda+1}x$$ for the middle level. We remark that because $k^{\bigstar}_{C_2}$ is not a cyclic $k^{\bigstar}_{C_4}$ module, it does not suffice to compute the top level $d^1$ on $e^{j\rho},e^{j\rho+\sigma},e^{j\rho+\lambda},e^{j\rho+\lambda+1}$.\medbreak

 The $d^1$ differentials from row $4j$ to row $4j+1$ are all determined by the differential % $d^1:k^{\bigstar-j\rho}e^{j\rho}\to k^{\bigstar-j\rho-\sigma+1}e^{j\rho+\sigma}$ hence is 
 $d^1:ke^{j\rho}\to k^{1-\sigma}e^{j\rho+\sigma}$. Note that $k^{1-\sigma}$ is generated by $0|u_{\sigma}^{-1}|\overline{\bar u}_{\sigma}^{-1}$ (this notation was defined in \cite{Geo19} and expresses the generators of all three levels from top to bottom separated by vertical columns). The $d^1$ is trivial on bottom level, and using the fact that it commutes with restriction we can see that it's trivial in all levels. \medbreak
 
 Similarly, the $d^1$ differentials from row $4j+1$ to row $4j+2$ are all determined by $d^1:ke^{j\rho+\sigma}\to (k^{\sigma-\lambda+1})_{C_4/C_2}e^{j\rho+\lambda}$. Note that $(k^{\sigma-\lambda+1})_{C_4/C_2}$ is generated by $$v\bar u_{\sigma}(x+gx)|v\bar u_{\sigma}(x,gx)|\overline{\bar u}_{\sigma}\overline{\bar u}_{\lambda}^{-1}(x,gx)$$ The differential is trivial on the bottom level, but on middle level the $C_2$ computation gives $k^{\sigma}_{C_2}(B_{C_4}\Sigma_{2+})=0$ forcing the differential to be nontrivial (the only other way to kill $E_1^{\sigma-\lambda+1,4j+2}(C_4/C_2)=k^2$ is for the $d^1$ differential from row $4j+2$ to $4j+3$ to be the identity $k^2\to k^2$ on middle level, which can't happen as we show in the next paragraph). Thus:
 \begin{gather}
 d^1(\bar e^{j\rho+\sigma})=v\bar u_{\sigma}\bar e^{j\rho+\lambda}(x+gx)\\
 d^1(e^{j\rho+\sigma})=v\bar u_{\sigma}e^{j\rho+\lambda}(x+gx)
 \end{gather}
The $d^1$ differentials from row $4j+2$ to row $4j+3$ %  $d^1:(k^{\bigstar-j\rho-\lambda})_{C_4/C_2}e^{j\rho+\lambda}\to (k^{\bigstar-j\rho-\lambda})_{C_4/C_2}e^{j\rho+\lambda+1}$ hence 
are determined by \begin{gather}
	d^1:k_{C_4/C_2}\bar u_\sigma^{-i}e^{j\rho+\lambda}\to k_{C_4/C_2}\bar u_\sigma^{-i}e^{j\rho+\lambda+1}\text{ and }\\
	d^1:k_{C_4/C_2}\bar u_\sigma^{-i}\sqrt{\bar a_{\lambda}\bar u_{\lambda}}e^{j\rho+\lambda}\to k_{C_4/C_2}\bar u_\sigma^{-i}\sqrt{\bar a_{\lambda}\bar u_{\lambda}}e^{j\rho+\lambda+1}
\end{gather}
On bottom level, these $d^1$'s all are $x\mapsto x+gx$ and the commutation with restriction and transfer gives:
\begin{gather}
d^1(\bar e^{j\rho+\lambda}x)=\bar e^{j\rho+\lambda+1}(x+gx)\\
d^1(\bar u_\sigma^{-i}e^{j\rho+\lambda}(x+gx))=0\\
d^1(\bar u_\sigma^{-i}\sqrt{\bar a_{\lambda}\bar u_{\lambda}} e^{j\rho+\lambda}(x+gx))=0
\end{gather}
 
Finally, the $d^1$ differentials from row $4j+3$ to row $4j+4$ are determined by
\begin{gather}
	d^1:k_{C_4/C_2}\bar u_\sigma^{-i}e^{j\rho+\lambda+1}\to k^{1-\sigma}e^{j\rho+\rho}\text{ and }\\
d^1:k_{C_4/C_2}\bar u_\sigma^{-i}\sqrt{\bar a_{\lambda}\bar u_{\lambda}}e^{j\rho+\lambda+1}\to k^{2-\sigma+\lambda}e^{j\rho+\rho}
\end{gather}
These are trivial on the bottom level and by the commutation with restriction and transfer we can see that they are trivial on all levels.\medbreak

This settles the $E_1$ page computation.

\subsection{Bottom level computation}

We can immediately conclude that the bottom level spectral sequence collapses in $E_2$, giving a single $k$ in every $RO(C_4)$ degree. Thus there are no extension problems and the $C_4$ (Weyl group) action is trivial.
 
\subsection{Middle level computation}\label{MidLevelComp}

By remark \ref{Remark} we can immediately conclude that the middle level spectral sequence collapses on $E_2=E_\infty$. 

If we have a middle level element $\alpha\in E_\infty^{s,V}$ and $E_\infty^{t,V}=0$ for $t>s$ then $\alpha$ lifts uniquely to $k^{\bigstar}_{C_2}(B_{C_4}\Sigma_{2+})$. If on the other hand $E_\infty^{t,V}\neq 0$ for some $t>s$, then there are multiple lifts of $\alpha$. In that case, we pick the lift for which there are no exotic restrictions (if possible). For example, if $\Res^2_1(\alpha)=0$ in $E_\infty$ and there is a unique lift $\beta$ of $\alpha$ such that $\Res^2_1(\beta)=0$, then we use $\beta$ as our lift of $\alpha$.

 With this in mind, and the computation of the $E_2$ terms, we have:
\begin{itemize}
	\item The elements $\bar e^{j\rho}$ survive the spectral sequence and lift uniquely to elements $\bar e^{j\rho}$ in $k^{\bigstar}_{C_2}(B_{C_4}\Sigma_{2+})$.
	\item  The elements $\bar u_{\sigma}^i\bar e^{j\rho+\sigma}$ don't survive, but every other multiple of $\bar e^{j\rho+\sigma}$ does. These multiples are generated by: $$\bar a_\lambda \bar e^{j\rho+\sigma}, \sqrt{\bar a_{\lambda}\bar u_{\lambda}}\bar e^{j\rho+\sigma},\bar u_{\lambda}\bar e^{j\rho+\sigma}$$ 
	\begin{itemize}
		\item The elements $\bar u_{\lambda}\bar e^{j\rho+\sigma}$ lift uniquely to elements $\bar e^{j,u}$ in $k^{\bigstar}_{C_2}(B_{C_4}\Sigma_{2+})$.
		\item For each $j\ge 0$, the element $\bar a_\lambda \bar e^{j\rho+\sigma}$ has two distinct lifts. On $E_\infty$ we have that  $\Res^2_1(\bar a_\lambda \bar e^{j\rho+\sigma})=0$ and on $k^{\bigstar}_{C_2}(B_{C_4}\Sigma_{2+})$ only one of the two lifts has trivial restriction. We denote that lift by $\tilde e^{j,a}$.
		\item Similarly, the elements $\sqrt{\bar a_{\lambda}\bar u_{\lambda}}\bar e^{j\rho+\sigma}$ have trivial restriction on $E_\infty$ and unique lifts with trivial restriction on $k^{\bigstar}_{C_2}(B_{C_4}\Sigma_{2+})$, that we denote by $e^{j,au}$. 	\end{itemize}
	\item  The elements $\bar e^{j\rho+\lambda}x, \bar e^{j\rho+\lambda}gx$ don't survive while the elements $\bar e^{j\rho+\lambda}(x+gx)$ do. They lift uniquely to elements in $k^{\bigstar}_{C_2}(B_{C_4}\Sigma_{2+})$ that we denote by $\bar e^{j\rho+\lambda}$. We have the relation 
	\begin{equation}
		v\bar e^{j\rho+\lambda}=0
	\end{equation}
	\item The elements $\bar e^{j\rho+\lambda+1}(x+gx)$ don't survive while the elements $\bar e^{j\rho+\lambda+1}x=\bar e^{j\rho+\lambda+1}gx$ do. They lift uniquely to elements in $k^{\bigstar}_{C_2}(B_{C_4}\Sigma_{2+})$ that we denote by $e^{'j\rho+\lambda+1}$.
\end{itemize}

\begin{rem}
We should explain the notation used for the generators above. \\
First, the elements $\bar e^{j,u}, \bar e^{j\rho+\lambda}$ will turn out to be the restrictions of top level elements $e^{j,u}, e^{j\rho+\lambda}$ respectively, both in $E_{\infty}$ and in $k^{\bigstar}_{C_4}(B_{C_4}\Sigma_{2+})$, hence their notation. \\
Second, the elements $e^{j,au}, e^{'j\rho+\lambda+1}$ are never restrictions, neither in $E_{\infty}$ nor in $k^{\bigstar}_{C_4}(B_{C_4}\Sigma_{2+})$, so their notation is rather ad-hoc: the $au$ in $e^{j,au}$ serves as a reminder of the $\sqrt{\bar a_{\lambda}\bar u_{\lambda}}$ in $e^{j,au}=\sqrt{\bar a_{\lambda}\bar u_{\lambda}}\bar e^{j\rho+\sigma}$, while the prime $'$ in $e^{'j\rho+\lambda+1}$ is used to distinguish them from the top level generators $e^{j\rho+\lambda+1}$ that the $e^{'j\rho+\lambda+1}$ transfer to.\\
Finally, the elements $\tilde e^{j,a}$ are restrictions of top level elements $e^{j,a}$ in $E_{\infty}$, but not in $k^{\bigstar}_{C_4}(B_{C_4}\Sigma_{2+})$ due to nontrivial Mackey functor extensions (exotic restrictions). That's why we denote them by $\tilde e^{j,a}$ as opposed to $\bar e^{j,a}$; the $\bar e^{j,a}$ are reserved for $\Res^4_2(e^{j,a})=\tilde e^{j,a}+\bar u_\sigma e^{'j\rho+\lambda+1}$ (see Lemma \ref{ResOfea}) .
\end{rem}

For convenience, when $j=0$ we write $\tilde e^a,e^{au}, \bar e^u$ in place of $\tilde e^{0,a}, e^{0,au}, \bar e^{0,u}$ respectively.\medbreak
 
 Now recall that $k^{\bigstar}_{C_2}(B_{C_2}\Sigma_{2+})=k^{\bigstar}_{C_2}\{e^{j\rho_2},e^{j\rho_2+\sigma_2}\}$ (see subsection \ref{BC2S2}). We shall write our middle level $C_4$ generators in terms of the $C_2$ generators.
 
 \begin{prop}We have:
 \begin{align}
 \bar e^{j\rho}&=\bar u_{\sigma}^{j}e^{2j\rho_2}\\
  \bar e^{j,u}&= \bar u_{\sigma}^{j+1}u_{\sigma_2}e^{2j\rho_2+\sigma_2}\\
   e^{j,au}&= \bar u_{\sigma}^{j+1}a_{\sigma_2}e^{2j\rho_2+\sigma_2}\\
 \tilde e^{j,a}&=\bar u_{\sigma}^{j+1}a_{\sigma_2}e^{(2j+1)\rho_2}\\
 \bar e^{j\rho+\lambda}&=\bar u_{\sigma}^{j}a_{\sigma_2}e^{2j\rho_2+\sigma_2}+\bar u_{\sigma}^{j}u_{\sigma_2}e^{(2j+1)\rho_2}\\
  e^{'j\rho+\lambda+1}&= \bar u_{\sigma}^{j}e^{(2j+1)\rho_2+\sigma_2}
 \end{align}
 \end{prop}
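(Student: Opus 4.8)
The plan is to restrict the cellular filtration of $B_{C_4}\Sigma_2$ from subsection \ref{BC4S2dec} to the subgroup $C_2$ and compare it with the cellular filtration of $B_{C_2}\Sigma_2$ from subsection \ref{BC2S2}. By Remark \ref{Remark} the two spectral sequences converge to the same group, and as recorded after Proposition \ref{BiggestProposition} this common group is $k^{\bigstar}_{C_2}(B_{C_4}\Sigma_{2+})=k^{\bigstar'}_{C_2}(B_{C_2}\Sigma_{2+})[\bar u_\sigma^{\pm}]$, with $\bar u_\sigma$ the invertible class of $RO(C_4)$-degree $1-\sigma$ coming from the fact that $\sigma$ restricts trivially to $C_2$ (while $\lambda$ restricts to $2\sigma_2$). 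In particular this ring is free over $k^{\bigstar}_{C_2}$ on the monomials $\bar u_\sigma^{n}b^{i}$ and $\bar u_\sigma^{n}cb^{i}$ with $n\in\Z$ and $i\ge 0$, so for each of the generators $\bar e^{j\rho},\bar e^{j,u},e^{j,au},\tilde e^{j,a},\bar e^{j\rho+\lambda},e^{'j\rho+\lambda+1}$ it suffices to compute its $RO(C_4)$-degree and then its coordinates in this basis.

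First I would compute the degrees. Each generator was introduced in subsection \ref{MidLevelComp} as a lift of a specified $E_{\infty}$-class --- $\bar e^{j\rho}$ of the restriction of the degree-$j\rho$ cell class; $\bar e^{j,u},e^{j,au},\tilde e^{j,a}$ of $\bar u_\lambda\bar e^{j\rho+\sigma}$, $\sqrt{\bar a_\lambda\bar u_\lambda}\,\bar e^{j\rho+\sigma}$, $\bar a_\lambda\bar e^{j\rho+\sigma}$; and $\bar e^{j\rho+\lambda},e^{'j\rho+\lambda+1}$ of the surviving classes in rows $4j+2,4j+3$ --- so applying $\Res^4_2$ to the evident representations (and using $\bar a_\lambda=a_{\sigma_2}^{2}$, $\bar u_\lambda=u_{\sigma_2}^{2}$, $\sqrt{\bar a_\lambda\bar u_\lambda}=a_{\sigma_2}u_{\sigma_2}$ from subsection \ref{ROC4HomolPt}) yields all six degrees. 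In each such degree only a short list of terms $\bar u_\sigma^{n}b^{i}$, $\bar u_\sigma^{n}cb^{i}$ with $k^{\bigstar}_{C_2}$-coefficient can occur, so the candidates are few.

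The correct candidate is singled out by a leading-term argument comparing the two filtrations. The $C_4$-cell $S^{j\rho}$ restricts to $S^{2j\rho_2}$, which is literally the dimension-$4j$ cell of the $C_2$-filtration of $B_{C_2}\Sigma_2$, so $\bar e^{j\rho}$ is the corresponding cell class $e^{2j\rho_2}$, normalised to $\bar u_\sigma^{j}e^{2j\rho_2}$ by the degree; the cells $S^{j\rho+\sigma}$, $\Sigma^{j\rho+\lambda}C_4/C_{2+}$, $\Sigma^{j\rho+1+\lambda}C_4/C_{2+}$ restrict to $S^{2j\rho_2+1}$, a wedge of two copies of $S^{2j\rho_2+2\sigma_2}$, and a wedge of two copies of $S^{2j\rho_2+1+2\sigma_2}$, and locating the surviving $E_{\infty}$-classes (the multiples of $\bar e^{j\rho+\sigma}$ by $\bar a_\lambda$, $\sqrt{\bar a_\lambda\bar u_\lambda}$, $\bar u_\lambda$ from subsection \ref{MidLevelComp}, and the $x+gx$-classes from the wedges) inside the $C_2$-filtration reads off the top-filtration term of each of $\bar e^{j,u},e^{j,au},\tilde e^{j,a},\bar e^{j\rho+\lambda},e^{'j\rho+\lambda+1}$ in terms of $e^{i\rho_2}=b^{i}$ and $e^{i\rho_2+\sigma_2}=cb^{i}$. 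Any lower-filtration correction is then pinned down by the prescribed choice of lift: $\tilde e^{j,a}$ and $e^{j,au}$ are the unique lifts whose restriction to the bottom level vanishes, and since $a_{\sigma_2}$ restricts to $0$ there while $c\mapsto x$ and $b\mapsto x^{2}$ in $k^{*}(\R P^{\infty})=k[x]$, this selects exactly the stated $a_{\sigma_2}$-multiples; multiplicativity together with the relation $c^{2}=a_{\sigma_2}c+u_{\sigma_2}b$ disposes of the residual bookkeeping, and a final comparison of restrictions to the bottom level confirms the formulas.

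The step I expect to be the real obstacle is the formula for $\bar e^{j\rho+\lambda}$, which comes out as a genuine two-term combination $\bar u_\sigma^{j}\bigl(a_{\sigma_2}e^{2j\rho_2+\sigma_2}+u_{\sigma_2}e^{(2j+1)\rho_2}\bigr)$ rather than a single monomial. This happens because the filtration stage $\Sigma^{j\rho+\lambda}C_4/C_{2+}$ in which it is born, after restriction to $C_2$, breaks up into two ordinary $C_2$-cells lying in the two adjacent filtrations $4j+1$ and $4j+2$ of $B_{C_2}\Sigma_2$; the leading-term argument only produces the filtration-$(4j+2)$ summand $\bar u_\sigma^{j}u_{\sigma_2}e^{(2j+1)\rho_2}$, and extracting the filtration-$(4j+1)$ correction $\bar u_\sigma^{j}a_{\sigma_2}e^{2j\rho_2+\sigma_2}$ with the correct coefficient requires comparing the restricted $C_4$-cell structure with the $C_2$-cell structure directly, i.e.\ computing the relevant connecting map. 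Carrying this out while keeping the $\bar u_\sigma$-powers and $u_{\sigma_2}$-twists straight uniformly in $j$ is where the actual work lies; the other five families of generators are single monomials and fall out of the degree count together with the vanishing of the appropriate bottom-level restrictions.
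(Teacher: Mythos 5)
Your overall strategy matches the paper's: the paper's proof runs on exactly this comparison, via the explicit $C_2\times\Sigma_2$-equivariant homeomorphism $f\colon E_{C_4}\Sigma_2\to E_{C_2}\Sigma_2$, $f(x_1,x_2,x_3,x_4,\ldots)=(x_1,x_3,x_2,x_4,\ldots)$, which produces a zigzag of inclusions between the restricted $C_4$-filtration and the $C_2$-filtration. The degree bookkeeping, the identifications $\bar a_\lambda=a_{\sigma_2}^2$, $\bar u_\lambda=u_{\sigma_2}^2$, and the fact that $\tilde e^{j,a}, e^{j,au}$ are pinned by the vanishing of their bottom-level restrictions are all as in the paper.

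There is a genuine gap, in two places. First, the claim that the five single-monomial families "fall out of the degree count together with the vanishing of the appropriate bottom-level restrictions" fails already for $\bar e^{j,u}$: the degree count leaves a candidate $\bar e^u\bar u_\sigma^{-1}=\epsilon_1 a_{\sigma_2}u_{\sigma_2}+u_{\sigma_2}c$, and since $\Res^2_1(a_{\sigma_2})=0$, restriction to the bottom level cannot see $\epsilon_1$. Unlike $\tilde e^{j,a}$ and $e^{j,au}$, which are characterized by having trivial bottom-level restriction, $\bar e^{j,u}$ has nontrivial restriction, so every candidate correction to it lies in $\mathrm{Ker}(\Res^2_1)$ and the restriction argument is silent. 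Second, your leading-term argument presumes a cell-by-cell correspondence of filtrations, but $\Res^4_2 S^{j\rho+\sigma}=S^{(2j+1)+2j\sigma_2}$ is not a cell of the $C_2$-filtration (it matches neither $S^{i\rho_2}$ nor $S^{(i+1)\rho_2-1}$ for any $i$), so the two filtrations only interleave and there is no canonical leading term. The paper sidesteps the proposed connecting-map computation entirely: it observes that the $E_\infty$ page in $RO(C_4)$-degree $2\lambda+\sigma-2$ forces a module relation $\bar a_\lambda\bar e^u=\bar u_\lambda\tilde e^a+\epsilon_6\sqrt{\bar a_\lambda\bar u_\lambda}\,\bar u_\sigma\bar e^\lambda+\epsilon_7\bar u_\sigma\bar u_\lambda e^{'\lambda+1}$, then substitutes the parametrized $c,b$-expansions and matches coefficients, which determines $\epsilon_1,\ldots,\epsilon_7$ all at once, giving $\epsilon_1=0$ and the two-term formula for $\bar e^\lambda$. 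That device (or the harder connecting-map computation you gesture at) is what is missing; "multiplicativity together with $c^2=a_{\sigma_2}c+u_{\sigma_2}b$" as you state it does not supply it.
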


 \begin{proof}
 	The map $f:E_{C_4}\Sigma_2\to E_{C_2}\Sigma_2$,
 	$f(x_1,x_2,x_3,x_4,...)=(x_1,x_3,x_2,x_4,...)$ is a $C_2\times \Sigma_2$ equivariant homeomorphism and induces a map on filtrations:
 	\begin{center}
 	 \begin{tikzcd}
 		S(1)\ar[d, <->]\ar[r,hook]&S(1+\sigma)\ar[rd, ]\ar[r,hook]&X\ar[r,hook]\ar[rd]&S(\rho)\ar[d]\ar[r,hook]&\cdots\\
 		S(1)\ar[r,hook]&S(\rho_2)\ar[r,hook]\ar[ru,crossing over]&S(1+\rho_2)\ar[r,hook]\ar[u]&S(2\rho_2)\ar[r,hook]\ar[u]&\cdots
 	\end{tikzcd}
 	%	\begin{tikzcd}
 	%	S(j\rho)\ar[d, <->]\ar[r,hook]&S(j\rho+1)\ar[d, <->]\ar[r,hook]&S(j\rho+\sigma)\ar[r,hook]\ar[rd]&X\ar[rd]\ar[r,hook]&S(j\rho+\rho)\ar[d, <->]\\
 	%	S(2j\rho_2)\ar[r,hook]&S(2j\rho_2+1)\ar[r,hook]&S(2j\rho_2+\rho_2)\ar[r,hook]\ar[ru,crossing over]&S(2j\rho_2+\rho_2+\sigma_2)\ar[r,hook]\ar[u]&S(2j\rho_2+2\rho_2)
 	%	\end{tikzcd}
 	\end{center}
 	(the downwards arrows are $f$ while the arrows in the opposite direction are $f^{-1}$).\\ 	
 	To keep the notation tidy, we verify the correspondence of generators for $j=0$. 
 	
 	In the $C_4$ spectral sequence, we have $\tilde e^a\bar u_{\sigma}^{-1}, e^{'\lambda+1}$ in degree $\lambda+1$ and filtrations $1,3$ respectively. In the $C_2$ spectral sequence, we have $a_{\sigma_2}e^{\sigma_2+1}$ and $e^{2\sigma_2+1}$ in the same degree and filtrations $2,3$ respectively. The correspondence of filtrations gives:
 	\begin{gather}e^{'\lambda+1}=e^{2\sigma_2+1}\\
 		\tilde e^a\bar u_{\sigma}^{-1}=a_{\sigma_2}e^{\sigma_2+1}+\epsilon e^{2\sigma_2+1}
 		\end{gather}
 	where $\epsilon=0,1$. Applying restriction on the second equation reveals that $\epsilon=0$ and thus $\tilde e^a\bar u_\sigma^{-1}=a_{\sigma_2}e^{\sigma_2+1}$.
% 	 $\bar e^a\bar u_{\sigma}^{-1}=a_{\sigma_2}e^{\sigma_2+1}$. The other elements in degree $\lambda+1$ are $e^{'\lambda+1}$ and $e^{2\sigma_2+1}$ in the two spectral sequences, hence:
% 	\begin{equation}
 %		e^{'\lambda+1}=?a_{\sigma_2}e^{\sigma_2+1}+e^{2\sigma_2+1}
 %	\end{equation}
 %where $?=0,1$. We may take $?=0$ by redefining $e^{'\lambda+1}$ as $\bar e^{a}\bar u_{\sigma}^{-1}+e^{'\lambda+1}$ if needed; this keeps $e^{'\lambda+1}$ in filtration $\le 3$. 
 The correspondence of filtrations in degrees $\lambda-1,\lambda$ gives:
 	\begin{gather}
 	\bar e^u\bar u_{\sigma}^{-1}=\epsilon_1a_{\sigma_2}u_{\sigma_2}+u_{\sigma_2}e^{\sigma_2}\\
 	e^{au}\bar u_{\sigma}^{-1}=\epsilon_2a_{\sigma_2}e^{\sigma_2}+\epsilon_3u_{\sigma_2}e^{\sigma_2+1}\\
 	\bar e^{\lambda}=\epsilon_4a_{\sigma_2}e^{\sigma_2}+\epsilon_5u_{\sigma_2}e^{\sigma_2+1}
 	\end{gather}
 	where $\epsilon_i=0,1$.  Looking at degree $2\lambda+\sigma-2$ in the $C_4$ spectral sequence we see that we have a relation 
 	$$\bar a_{\lambda}\bar e^u=\bar u_{\lambda}\tilde e^a+\epsilon_6\sqrt{\bar a_{\lambda}\bar u_{\lambda}}\bar u_{\sigma}\bar e^{\lambda}+\epsilon_7\bar u_{\sigma}\bar u_{\lambda}e^{'\lambda+1}$$
 	where again $\epsilon_i=0,1$. Combining the equations above we conclude that $$\bar a_{\lambda}\bar e^u=\bar u_{\lambda}\tilde e^a+\sqrt{\bar a_{\lambda}\bar u_{\lambda}}\bar u_{\sigma}\bar e^{\lambda}$$ and
 	\begin{gather}
 	\bar e^u\bar u_{\sigma}^{-1}=u_{\sigma_2}e^{\sigma_2}\\
 	 e^{au}\bar u_{\sigma}^{-1}=a_{\sigma_2}e^{\sigma_2}\\
 	\bar e^{\lambda}=a_{\sigma_2}e^{\sigma_2}+u_{\sigma_2}e^{\sigma_2+1}
 	\end{gather}
 \end{proof}
As a corollary we obtain the relations:
\begin{gather}
\bar u_{\lambda}\tilde e^{j,a}=\bar a_{\lambda}\bar e^{j,u}+\sqrt{\bar a_{\lambda}\bar u_{\lambda}}\bar u_{\sigma}\bar e^{j\rho+\lambda}\\
\bar u_{\lambda}e^{j,au}=\sqrt{\bar a_{\lambda}\bar u_{\lambda}}\bar e^{j,u}\\
\sqrt{\bar a_{\lambda}\bar u_{\lambda}}e^{j,au}=\bar a_{\lambda}\bar e^{j,u}\\
\bar a_{\lambda}e^{j,au}=\sqrt{\bar a_{\lambda}\bar u_{\lambda}}\tilde e^{j,a}+\bar a_{\lambda}\bar u_{\sigma}\bar e^{j\rho+\lambda}\\
\frac{v}{\bar a_{\lambda}^i}\bar e^{j\rho+\lambda}=0
%\\
%\frac{v}{\sqrt{\bar a_{\lambda}\bar u_{\lambda}}}=\frac{v}{\bar u_{\lambda}}(e^{j,au}+\bar u_{\sigma}\bar e^{j\rho+\lambda})\\
%\frac{v}{\sqrt{\bar a_{\lambda}\bar u_{\lambda}}}e^{j,au}=\frac{v}{\bar u_{\lambda}}\bar e^{j,u}\\
%\frac{v}{\sqrt{\bar a_{\lambda}\bar u_{\lambda}}}\bar e^{j,u}=\frac{v}{\bar a_{\lambda}}e^{j,au}\\
%\frac{v}{\bar u_{\lambda}^i}\bar e^{j,a}=\frac{v}{\bar u_{\lambda}^i}e^{j,au}=\frac{v}{\bar a_{\lambda}^i}\bar e^{j,u}=0
\end{gather} 
Therefore, $k^{\bigstar}_{C_2}(B_{C_4}\Sigma_{2+})$ is spanned as a $k^{\bigstar}_{C_2}$ module by $\bar e^{j\rho}, \tilde e^{j,a}, e^{j,au}, \bar e^{j,u}$, $\bar e^{j\rho+\lambda}, e^{'j\rho+\lambda+1}$ under the relations above. The bottom level $k^{\bigstar}_e(B_{C_4}\Sigma_{2+})$ is free on the restrictions of $\bar e^{j\rho},\bar  e^{j,u}, \bar e^{j\rho+\lambda}, e^{'j\rho+\lambda+1}$.\medbreak
 
 The $C_4/C_2$ (Weyl group) action is trivial: The only extension extensions that may arise are $g\tilde e^{j,a}=\tilde e^{j,a}+\epsilon e^{'j\rho+\lambda+1}$ and $ge^{j,au}=e^{j,au}+\epsilon'\bar e^{j\rho+\lambda}$ where $\epsilon,\epsilon'=0,1$; applying restriction shows that $\epsilon=\epsilon'=0$.\medbreak
 
 The cup product structure can be understood in terms of the $C_2$ generators $c,b$ of subsection \ref{BC2S2}. As an algebra, $k^{\bigstar}_{C_2}(B_{C_4}\Sigma_{2+})$ is generated by $\tilde e^a, e^{au}, \bar e^u, \bar e^\lambda, e^{'\lambda+1}, \bar e^\rho$ under multiplicative relations that are implied by the correspondence of generators:
 \begin{align}
 	e^{\rho}&=\bar u_\sigma b^{2}\\
 	\tilde e^a&=\bar u_\sigma a_{\sigma_2}b\\
 	e^{au}&=\bar u_\sigma a_{\sigma_2}c\\
 	\bar e^{u}&=\bar u_{\sigma}u_{\sigma_2}c\\
 	\bar e^{\lambda}&=c^2=a_{\sigma_2}c+u_{\sigma_2}b\\
 	e^{'\lambda+1}&= cb
 \end{align}

\begin{rem}

The reader may notice that this description of the middle level $k^{\bigstar}_{C_2}(B_{C_4}\Sigma_{2+})$ is rather different from the one given in Proposition \ref{BiggestProposition}. Let us now explain this discrepancy. First, the relation
\begin{gather}
	\bar u_{\lambda}e^{au}=\sqrt{\bar a_{\lambda}\bar u_{\lambda}}\bar e^{u}
\end{gather} 
allows us to replace $e^{au}$ by the quotient $$\frac{\sqrt{\bar a_{\lambda}\bar u_{\lambda}}\bar e^{u}}{\bar u_\lambda}$$ which is why $e^{au}$ does not appear in Proposition \ref{BiggestProposition} but $(\sqrt{\bar a_{\lambda}\bar u_{\lambda}}\bar e^{u})/\bar u_\lambda$ does.\\
Second, in Lemma \ref{ResOfea}, we shall see that $\tilde e^{a}+\bar u_\sigma e^{'\lambda+1}$ is the restriction of a top level generator $e^{a}$, which we denote by $\bar e^{a}$. We can replace the generator $\tilde e^{a}$ by the element $\bar e^{a}$ and get the relation:
 \begin{gather}
 \bar u_\sigma \bar u_\lambda e^{'\lambda+1}=	\bar u_{\lambda}\bar e^{a}+\bar a_{\lambda}\bar e^{u}+\sqrt{\bar a_{\lambda}\bar u_{\lambda}}\bar u_{\sigma}\bar e^{\lambda}
 \end{gather} 
 Thus we can replace the generator $e^{'\lambda+1}$ by the quotient $$\frac{\bar u_\sigma^{-1}\bar a_{\lambda}\bar e^{u}+\sqrt{\bar a_{\lambda}\bar u_{\lambda}}\bar e^{\lambda}}{\bar u_\lambda}$$
 which is what we do in the description of the middle level $k^{\bigstar}_{C_2}(B_{C_4}\Sigma_{2+})$ found in Proposition \ref{BiggestProposition}. For our convenience, we shall continue to use the generators $\tilde e^{j,a}, e^{j,au}, e^{'j\rho+\lambda+1}$ in the following subsections instead of their replacements. 
 
\end{rem}

\subsection{Top level differentials}

In this subsection, we compute the top level of the $E_{\infty}$ page.
 
% First observe that $E_r^{s,\bigstar}=E_r^{s+4,\bigstar+\rho}$. This guarantees that all arguments that work for $e^0,e^{\sigma},e^{\lambda},e^{\lambda+1}$ also work for $e^{j\rho},e^{j\rho+\sigma},e^{j\rho+\lambda},e^{j\rho+\lambda+1}$ so in the proofs we can always assume $j=0$ for simplicity (although all th).
 
 From subsection \ref{d1comp}, we know that (the top level of) the $E_2$ page is generated by $$e^{j\rho}, \alpha e^{j\rho+\sigma}, \bar u_\sigma^{-i}\sqrt{\bar a_\lambda \bar u_\lambda}^{\epsilon}e^{j\rho+\lambda+\epsilon'}$$
 where $i,j\ge 0$, $\epsilon,\epsilon'=0,1$ and $\alpha\in k^{C_4}_{\bigstar}\setminus \{u_\sigma^m\text{ , }m\ge 0\}$. We also have the relation:
  \begin{equation}
 v\bar u_{\sigma}e^{j\rho+\lambda}=0
 \end{equation}
 
 For degree reasons, the elements $e^{j\rho}$ survive the spectral sequence. 
 
 The elements $\bar u_\sigma^{-i}e^{j\rho+\lambda+1}, \bar u_\sigma^{-i}\sqrt{\bar a_{\lambda}\bar u_{\lambda}}e^{j\rho+\lambda+1}$ are transfers hence also survive (by the middle level computation of subsection \ref{MidLevelComp}). 
 
 If $\alpha\in k^{C_4}_{\bigstar}$ is a transfer then so are the elements $\alpha e^{j\rho+\sigma}$ and thus they survive. The remaining elements $\alpha\in k^{\bigstar}_{C_4}\setminus \{u_\sigma^m\text{ , }m\ge 0\}$ can be broken into three categories:
 \begin{itemize}
 	\item multiples of $a_{\lambda}$
 	\item multiples of $u_{\lambda}/u_{\sigma}^i$
 	\item $a_{\sigma}u_{\sigma}^i$
 \end{itemize}

 \begin{prop}The elements $a_{\lambda}e^{j\rho+\sigma}$ survive the spectral sequence, while the elements $a_{\sigma}u_{\sigma}^ie^{j\rho+\sigma}$ support nontrivial differentials:
 	\begin{equation}
 	d^2(a_{\sigma}u_{\sigma}^ie^{j\rho+\sigma})=v\bar u_{\sigma}^{i+2}e^{j\rho+\lambda+1}
 	\end{equation}
 	for $i,j\ge 0$.
 \end{prop}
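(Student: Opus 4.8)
The plan is to combine the $k^{\bigstar}$-module structure of the spectral sequence, the $d^1$-differentials of subsection \ref{d1comp}, the already-computed middle level (which collapses at $E_2=E_\infty$ by Remark \ref{Remark}), the map to homotopy fixed points, and --- for the $d^2$ --- an Euler-class cofiber sequence. First, both families survive to $E_2$. Since $d^1$ is $k^{\bigstar}$-linear and $d^1(e^{j\rho+\sigma})=v\bar u_\sigma e^{j\rho+\lambda}(x+gx)$ lies in the top level of the summand $(k^{\bigstar})_{C_4/C_2}$, on which $k^{\bigstar}_{C_4}$ acts only through $\Res^4_2$, one gets $d^1(a_\sigma u_\sigma^i e^{j\rho+\sigma})=\bar a_\sigma\bar u_\sigma^{i+1}v\,e^{j\rho+\lambda}(x+gx)=0$ since $\bar a_\sigma=0$, and $d^1(a_\lambda e^{j\rho+\sigma})=\bar a_\lambda v\bar u_\sigma e^{j\rho+\lambda}(x+gx)=0$ since $\bar a_\lambda v=0$ (the top of the negative cone of $k^{\bigstar}_{C_2}$ is killed by $\bar a_\lambda$). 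Both classes are nonzero in $E_2$ because the only $d^1$ into row $4j+1$ comes from row $4j$ and is trivial on every level, so there are no boundaries in that row.

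For the survival of $a_\lambda e^{j\rho+\sigma}$ I would show it is a permanent cycle by detection. Its restriction to the middle level is $\bar a_\lambda\bar e^{j\rho+\sigma}$, which survives since the middle level spectral sequence collapses at $E_2=E_\infty$; hence any differential off $a_\lambda e^{j\rho+\sigma}$ would land in $\ker\Res^4_2$ on the pertinent $E_r$. Inspecting the top level of $E_2$ in the few bidegrees $(V+1,\,4j+1+r)$ accessible to such a differential shows that $\ker\Res^4_2$ vanishes there (each surviving top-level class in those spots restricts nontrivially), so $a_\lambda e^{j\rho+\sigma}$ supports no differential. It is nonzero in $E_\infty$ because it maps to a nonzero monomial in the homotopy fixed point spectral sequence, which (converging to the free module $k^{hC_4\bigstar}[w]$) collapses: there $u_\sigma,u_\lambda$ are inverted but $a_\lambda$ remains a nonzerodivisor.

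For the $d^2$ on $a_\sigma u_\sigma^i e^{j\rho+\sigma}$ it is enough, by $u_\sigma$-linearity of $d^2$, to treat $i=0$. I would use the cofiber sequence
\begin{equation}
(B_{C_4}\Sigma_2)_+\wedge S^{-\sigma}\xrightarrow{\ a_\sigma\ }(B_{C_4}\Sigma_2)_+\longrightarrow (B_{C_4}\Sigma_2)_+\wedge(C_4/C_2)_+\wedge S^{1-\sigma}.
\end{equation}
Filtering every term by the cell filtration of section \ref{BC4S2dec} gives three compatible spectral sequences together with connecting maps; by the induction isomorphism the spectral sequence of the third term is the middle level spectral sequence of $B_{C_4}\Sigma_2$, suspended by $1-\sigma$, hence completely known together with its $d^1$. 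The class $a_\sigma e^{j\rho+\sigma}$ is the image under $a_\sigma$ of $e^{j\rho+\sigma}$ in the first spectral sequence, where $e^{j\rho+\sigma}$ fails to be a $d^1$-cycle ($d^1 e^{j\rho+\sigma}=v\bar u_\sigma e^{j\rho+\lambda}(x+gx)$) but $a_\sigma$ annihilates this target; the geometric boundary theorem (relating differentials across the three spectral sequences) then yields the secondary differential $d^2(a_\sigma e^{j\rho+\sigma})$ and identifies it with the image, along the connecting map, of the $d^1$ in the third (middle-level) spectral sequence that annihilates $e^{j\rho+\lambda}(x+gx)$ after restriction, namely $d^1(\bar e^{j\rho+\lambda}x)=\bar e^{j\rho+\lambda+1}(x+gx)$. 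Carrying this back through the $S^{1-\sigma}$-suspension together with the coefficient of $d^1 e^{j\rho+\sigma}$ produces $d^2(a_\sigma e^{j\rho+\sigma})=v\bar u_\sigma^{2}e^{j\rho+\lambda+1}(x+gx)$, which is nonzero in $E_2$ because the top-level $d^1$ into row $4j+3$ vanishes; multiplying by $u_\sigma^i$ gives the stated formula. (This matches the differential flagged by the software; see Remark \ref{ProgramDifferential}.)

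The delicate point will be this last step: carefully running the geometric boundary theorem across the $1-\sigma$ suspension and confirming that the output target is exactly the class named in the statement (in particular the power of $\bar u_\sigma$) and is nonzero at $E_2$. An alternative that sidesteps the geometric boundary theorem, and is perhaps cleaner to write out in full, is to refine the cell filtration to a ``type II'' decomposition with trivial-sphere cells $(C_4/H)_+\wedge S^n$ as in subsection \ref{CWDecompOrbits}; in that model $d^2$ is literally the composite of two cellular $k^{\bigstar}$-coboundary maps, and the identification of the target, while longer, is entirely mechanical.
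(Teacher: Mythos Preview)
Your outline is sound, but both halves differ from the paper's route, and in each case the paper's argument is shorter.

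For the survival of $a_\lambda e^{j\rho+\sigma}$ the paper does exactly one step: for degree reasons the only possible differential is $d^3(a_\lambda e^{j\rho+\sigma})=e^{(j+1)\rho}$, and applying $\Res^4_2$ (injective on the target $k$) rules it out. Your detection argument via $\ker\Res^4_2$ is the same idea dressed up; the homotopy-fixed-point check for nonvanishing in $E_\infty$ is extra and not needed for the statement.

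For the $d^2$ the paper does not touch the geometric boundary theorem. It argues by contradiction: if $a_\sigma e^{j\rho+\sigma}$ survived it would lift uniquely to some $\alpha$, and one of the two lifts $\beta$ of $a_\lambda e^{j\rho+\sigma}$ would satisfy $(a_\sigma^2/a_\lambda)\beta=a_\sigma\alpha$ in $k^{j\rho+3\sigma}_{C_4}(B_{C_4}\Sigma_{2+})$. Restricting to $C_2$ kills the right side, while Lemma~\ref{ResOfea} (whose proof is independent of this survival question) gives $\Res^4_2(\beta)=\tilde e^{j,a}+\bar u_\sigma e^{'j\rho+\lambda+1}$; since $\Res^4_2(a_\sigma^2/a_\lambda)=v\bar u_\sigma^2$ and $v\tilde e^{j,a}=0$, one obtains $v\bar u_\sigma^3 e^{'j\rho+\lambda+1}=0$, contradicting the middle-level module structure computed in subsection~\ref{MidLevelComp}. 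So the entire $d^2$ is forced by a single algebraic identity in the already-known middle level.

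Your cofiber-sequence approach is a legitimate alternative and has the conceptual advantage of producing the differential directly rather than by contradiction. The trade-off is exactly what you flag: the bookkeeping through the $1-\sigma$ suspension and the identification of the target (including the $\bar u_\sigma$ power) is the whole proof, and you have not done it. The paper's route avoids that bookkeeping entirely by leaning on the exotic restriction of Lemma~\ref{ResOfea}.
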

 
 \begin{proof}
 	The elements $a_{\lambda}e^{j\rho+\sigma}$ can only support $d^3(a_{\lambda}e^{j\rho+\sigma})=e^{(j+1)\rho}$ and applying restriction shows that this cannot happen. 
 	
 	Fix $j\ge 0$. For degree reasons, the only differential $a_{\sigma}e^{j\rho+\sigma}$ can support is $d^2(a_{\sigma}e^{j\rho+\sigma})=v\bar u_{\sigma}^2e^{j\rho+\lambda+1}$. 
 	If $a_\sigma e^{j\rho+\sigma}$ survives then it lifts to a unique element $\alpha$ of $k^{j\rho+2\sigma}_{C_4}$, while $a_\lambda e^{{j\rho+\sigma}}$ has two possible lifts to $k^{j\rho+2\sigma}_{C_4}$ that differ by $\Tr_2^4(u_\sigma e^{'j\rho+\lambda+1})$. Both lifts have the same restriction, which by Lemma \ref{ResOfea} is computed to be $\tilde e^{j,a}+\bar u_\sigma e^{'j\rho+\lambda+1}$ (the proof of the Lemma works regardless of the survival of $a_\sigma e^{j\rho+\sigma}$). Now one of those lifts, that we shall call $\beta$, satisfies:
 	$$\frac{a_\sigma^2}{a_\lambda}\beta=a_\sigma \alpha$$
 	in $k^{j\rho+3\sigma}_{C_4}(B_{C_4}\Sigma_{2+})$. Applying $\Res^4_2$ gives that
 	$$\Res^4_2\Big(\frac{a_\sigma^2}{a_\lambda}\Big)\Res^4_2(\beta)=0\implies v\bar u_\sigma^2(\tilde e^{j,a}+\bar u_\sigma e^{'j\rho+\lambda+1})=0\implies v\bar u_\sigma^3 e^{'j\rho+\lambda+1}=0$$
 	which contradicts the computation of the module structure of the middle level.
 \end{proof}

\begin{rem}\label{ProgramDifferential}The non-survival of $a_{\sigma}e^{\sigma}$ is consistent with the computation that $k^{2\sigma}_{C_4}$ has dimension $1$ (spanned by $a_{\sigma}^2e^0$) by the computer program of \cite{Geo19}.
\end{rem}

All the other elements of $E_2$ survive the spectral sequence:
 
 \begin{prop}The elements $(u_\lambda/u_\sigma^i)e^{j\rho+\sigma}, \bar u_\sigma^{-i}e^{j\rho+\lambda}, \bar u_\sigma^{-i}\sqrt{\bar a_{\lambda}\bar u_{\lambda}}e^{j\rho+\lambda}$ survive the spectral sequence for $i,j\ge 0$.
 	\end{prop}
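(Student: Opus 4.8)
The plan is to rule out, for each of the three families, every differential $d^r$ with $r\ge 2$ that its bidegree permits. Recall that $(u_\lambda/u_\sigma^i)e^{j\rho+\sigma}$ sits in filtration $4j+1$, so (having already survived to $E_2$, since $u_\lambda/u_\sigma^i\notin\{u_\sigma^m:m\ge0\}$) its only candidate differentials are a $d^2$ into row $4j+3$ and a $d^3$ into row $4j+4$; the elements $\bar u_\sigma^{-i}e^{j\rho+\lambda}$ and $\bar u_\sigma^{-i}\sqrt{\bar a_\lambda\bar u_\lambda}e^{j\rho+\lambda}$ sit in filtration $4j+2$ and can only hit rows $4j+2+r$. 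For each candidate I would compute the total $RO(C_4)$-degree of the target group as the degree of the source minus that of the relevant unit class $e^{j'\rho}$, $e^{j'\rho+\sigma}$, or $e^{j'\rho+\lambda+\epsilon'}$, and confront it with the presentation of $k^{C_4}_{\bigstar}$, resp.\ of $k^{C_2}_{\bigstar}$ for the rows $E_1^{\bigstar,4j+2}$ and $E_1^{\bigstar,4j+3}$, from subsection \ref{ROC4HomolPt}. Most of these groups vanish: passing up a row subtracts a copy of $\rho$, hence a $\lambda$ and a $\sigma$, so the coefficient is forced to have non-positive $\lambda$-content together with too-negative $\sigma$- and trivial content, and the only monomials that could survive are killed by the Gold relation $a_\sigma^2u_\lambda=0$. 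The delicate part of this step is keeping track of the hidden $\bar u_\sigma^{\pm}$-grading on the middle level (it records the kernel of $RO(C_4)\to RO(C_2)$) and of the coherent-quotient conventions in $k^{C_4}_{\bigstar}$.

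The candidates that survive the degree count are exactly those whose target lands in the $\theta$-summand of a $k^{C_4}_{\bigstar}$-row, namely $(\theta/(a_\sigma^2u_\sigma^{i-3}))e^{(j+1)\rho}$ for the $d^2$ on $\bar u_\sigma^{-i}e^{j\rho+\lambda}$, $(\theta/(a_\sigma^3u_\sigma^{i-5}))e^{(j+1)\rho}$ for the $d^3$ on $(u_\lambda/u_\sigma^i)e^{j\rho+\sigma}$, and their obvious analogues (e.g.\ $(a_\lambda\theta/(a_\sigma^3u_\sigma^{i-4}))e^{(j+1)\rho}$ for $\bar u_\sigma^{-i}\sqrt{\bar a_\lambda\bar u_\lambda}e^{j\rho+\lambda}$). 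I would eliminate these by an Euler-class torsion argument. Since the top level of $E_1^{\bigstar,4j+2}$ and $E_1^{\bigstar,4j+3}$ is a $k^{C_4}_{\bigstar}$-module via $\Res^4_2$, and $\Res^4_2(a_\sigma)=\bar a_\sigma=0$ is the Euler class of the trivial representation $\sigma|_{C_2}$ (already implicit in the quotient by $a_\sigma$ in subsection \ref{E1}), the classes $\bar u_\sigma^{-i}e^{j\rho+\lambda}$ and $\bar u_\sigma^{-i}\sqrt{\bar a_\lambda\bar u_\lambda}e^{j\rho+\lambda}$ are annihilated by $a_\sigma$, while $(u_\lambda/u_\sigma^i)e^{j\rho+\sigma}$ is annihilated by $a_\sigma^2$ because $\tfrac{u_\lambda}{u_\sigma^i}\cdot a_\sigma^2=0$ in $k^{C_4}_{\bigstar}$. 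Now $a_\sigma$ (hence $a_\sigma^2$) is a permanent cycle, being $a_\sigma$ times the unit $e^0$ in filtration $0$, so the Leibniz rule gives $a_\sigma\cdot d^r(\text{source})=d^r(a_\sigma\cdot\text{source})=0$, and likewise for $a_\sigma^2$. Applying the annihilating Euler class to the candidate target and using coherent cancellation turns $(\theta/(a_\sigma^{k}u_\sigma^{\ell}))e^{(j+1)\rho}$ into $(\theta/(a_\sigma^{k-1}u_\sigma^{\ell}))e^{(j+1)\rho}$, which is nonzero on the page $E_r$ where the differential would live: it is nonzero on $E_1$ since row $4(j+1)$ is a free rank-one $k^{C_4}_{\bigstar}$-module, it is not a $d^1$-boundary because the $d^1$ into row $4(j+1)$ vanishes, and one checks that the internal degree in which a $d^2$ could hit it corresponds to a vanishing middle-level group. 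This contradiction forces each candidate differential to be zero, so the three families consist of permanent cycles.

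The main obstacle, as the above makes clear, is the second step rather than the first: the degree bookkeeping is intricate — one must be scrupulous about the $\bar u_\sigma^{\pm}$-grading and the divided-power/coherent-quotient relations in the homology of a point — and one must check carefully that the classes one compares against really are nonzero on the precise spectral-sequence page on which the differential in question is defined, and are not themselves boundaries. Once the residual $\theta$-part targets have been isolated, however, the Euler-torsion trick dispatches them all in the same way.
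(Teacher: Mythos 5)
Your overall strategy is the same as the paper's: exhaust the degree-reasons candidates, then eliminate the residual ones by multiplying the differential equation by $a_\sigma$ (or $a_\sigma^2$), using that these Euler powers annihilate the sources — $\Res^4_2(a_\sigma)=0$ kills the rows $4j+2,4j+3$, and $a_\sigma^2(u_\lambda/u_\sigma^i)=0$ by the Gold relation — whereas the targets stay nonzero after the same multiplication. Since the differentials are $k^{\bigstar}$-module maps (your phrasing via $a_\sigma$ being a permanent cycle and the Leibniz rule amounts to the same thing), this forces every candidate coefficient to vanish. That is the engine of the printed proof.

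That said, two points in your degree bookkeeping need repair before the argument is airtight.

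First, your explicit $\theta$-targets are each off by one unit of internal degree: $d^r$ has $(V,s)$-bidegree $(1,r)$, so passing from row $4j+2$ to $4(j+1)$ under a $d^2$ adds $1$ to $V$; the coefficient of $e^{(j+1)\rho}$ for the $d^2$ on $\bar u_\sigma^{-i}e^{j\rho+\lambda}$ therefore sits in cohomological degree $i-(i+1)\sigma$ and is $\theta/(a_\sigma u_\sigma^{i-2})$, not $\theta/(a_\sigma^2 u_\sigma^{i-3})$ (whose degree is $(i-1)-(i+1)\sigma$). Likewise the $d^3$ target of $(u_\lambda/u_\sigma^i)e^{j\rho+\sigma}$ is $\theta/(a_\sigma^2 u_\sigma^{i-4})$, not $\theta/(a_\sigma^3 u_\sigma^{i-5})$.

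Second, and more substantively, the claim that the degree count leaves only $\theta$-summand targets, the rest being killed by the Gold relation, is false. For the $d^2$ on $\bar u_\sigma^{-i}\sqrt{\bar a_\lambda\bar u_\lambda}e^{j\rho+\lambda}$, the coefficient degree for $e^{(j+1)\rho}$ is $\lambda+i-1-(i+1)\sigma$, which has $\lambda$-content $+1$, and $u_\lambda/u_\sigma^{i+1}$ is a genuine nonzero element there; it is not annihilated by the Gold relation. Your blanket assertion would have you miss this candidate. It happens that it too is eliminated by the same $a_\sigma$-multiplication (since $a_\sigma u_\lambda/u_\sigma^{i+1}\neq 0$), so the method survives, but your write-up should explicitly include and dispatch this term, as the paper does.

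On the positive side, you go beyond the printed proof in explicitly verifying that the $a_\sigma$-multiplied target is nonzero on the page $E_r$ where the differential lives (row $4(j+1)$ is free of rank one on $E_1$, the incoming $d^1$ was shown to vanish, and no earlier $d^2$ reaches it in the relevant internal degree). That is a worthwhile precision.
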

 \begin{proof}We work page by page. On $E_2$ we have:
 	 	\begin{gather}
 	d^2(\bar u_\sigma^{-i}e^{j\rho+\lambda})=\epsilon_1\frac{\theta}{a_\sigma u_\sigma^{i-2}}e^{(j+1)\rho}\\
 	d^2(\bar u_\sigma^{-i}\sqrt{\bar a_{\lambda}\bar u_{\lambda}}e^{j\rho+\lambda})=\epsilon_2\frac{\theta}{a_\sigma^2 u_\sigma^{i-3}}a_\lambda e^{(j+1)\rho}+\epsilon_3\frac{u_\lambda}{u_\sigma^{i+1}}e^{(j+1)\rho}
 	\end{gather}
 	where $\epsilon_i=0,1$. Multiplying by $a_\sigma$ and using that $a_\sigma\bar u_\sigma^{-i}e^{j\rho+\lambda}=0$ and that $a_\sigma\bar u_\sigma^{-i}\sqrt{\bar a_{\lambda}\bar u_{\lambda}}e^{j\rho+\lambda}=0$ shows that $\epsilon_1=\epsilon_2=\epsilon_3=0$.
 	
 	On $E_3$ we have:
 	 	 	\begin{gather}
 	d^3(\bar u_\sigma^{-i}e^{j\rho+\lambda})=\epsilon_1\frac{\theta}{a_\sigma^2 u_\sigma^{i-2}}e^{(j+1)\rho+\sigma}\\
 	d^3(\bar u_\sigma^{-i}\sqrt{\bar a_{\lambda}\bar u_{\lambda}}e^{j\rho+\lambda})=\epsilon_2\frac{\theta}{a_\sigma^3 u_\sigma^{i-3}}a_\lambda e^{(j+1)\rho+\sigma}\\
 	d^3\Big(\frac{u_{\lambda}}{u_{\sigma}^i}e^{j\rho+\sigma}\Big)=\epsilon_3\frac{\theta}{a_{\sigma}^2u_{\sigma}^{i-4}}e^{(j+1)\rho}
 	\end{gather}
 	where again $\epsilon_i=0,1$. We see that $\epsilon_1=\epsilon_2=0$ by multiplication with $a_\sigma$, while $\epsilon_3=0$ can be seen by multiplying with $a_\sigma^2$.
 	
 	The pattern of higher differentials is the same as in $E_2,E_3$ and the same arguments show that there are no higher differentials.
 \end{proof}

In conclusion:

\begin{cor}\label{EinftyGen} The $E_{\infty}$ page is generated as a $k^{\bigstar}_{C_4}$ module by $$e^{j\rho}, a_{\lambda}e^{j\rho+\sigma},(u_{\lambda}/u_{\sigma}^i)e^{j\rho+\sigma},\bar u_\sigma^{-i}\sqrt{\bar a_{\lambda}\bar u_{\lambda}}^{\epsilon}e^{j\rho+\lambda+\epsilon'}$$ where $i,j\ge 0$ and $\epsilon,\epsilon'=0,1$. We have relations: $$v\bar u_{\sigma}e^{j\rho+\lambda}=v\bar u_{\sigma}^2e^{j\rho+\lambda+1}=0$$
	\end{cor}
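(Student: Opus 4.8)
The plan is to derive this by assembling the two preceding propositions with the $d^{1}$-computation of subsection~\ref{d1comp}, so the argument is essentially bookkeeping carried out page by page. First I would recall (as already recorded at the start of this subsection) that the top level of $E_{2}$ is generated over $k^{\bigstar}_{C_{4}}$ by $e^{j\rho}$, by the classes $\alpha e^{j\rho+\sigma}$ with $\alpha\in k^{C_{4}}_{\bigstar}$ not a nonnegative power of $u_{\sigma}$, and by $\bar u_{\sigma}^{-i}\sqrt{\bar a_{\lambda}\bar u_{\lambda}}^{\epsilon}e^{j\rho+\lambda+\epsilon'}$ ($i,j\ge 0$, $\epsilon,\epsilon'\in\{0,1\}$), subject to $v\bar u_{\sigma}e^{j\rho+\lambda}=0$. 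By the two preceding propositions the only nonvanishing higher differential among these is $d^{2}(a_{\sigma}u_{\sigma}^{i}e^{j\rho+\sigma})=v\bar u_{\sigma}^{i+2}e^{j\rho+\lambda+1}$, while $e^{j\rho}$, $a_{\lambda}e^{j\rho+\sigma}$, $(u_{\lambda}/u_{\sigma}^{i})e^{j\rho+\sigma}$ and $\bar u_{\sigma}^{-i}\sqrt{\bar a_{\lambda}\bar u_{\lambda}}^{\epsilon}e^{j\rho+\lambda+\epsilon'}$ are permanent cycles not hit by any differential (this is exactly what was checked, via $\Res^{4}_{2}$ and multiplication by $a_{\sigma}$, in those proofs). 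Thus $E_{\infty}$ is $E_{2}$ with the classes $a_{\sigma}u_{\sigma}^{i}e^{j\rho+\sigma}$ removed and the relation $v\bar u_{\sigma}^{2}e^{j\rho+\lambda+1}=0$ imposed; this last relation kills the entire image of $d^{2}$, since $v\bar u_{\sigma}^{i+2}e^{j\rho+\lambda+1}=u_{\sigma}^{i}\cdot v\bar u_{\sigma}^{2}e^{j\rho+\lambda+1}$, while $v\bar u_{\sigma}e^{j\rho+\lambda}=0$ is inherited from $E_{2}$ as the $d^{1}$-image of $e^{j\rho+\sigma}$.

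It then remains to cut the generating set down to the stated list, i.e. to check that every surviving class $\alpha e^{j\rho+\sigma}$ is a $k^{\bigstar}_{C_{4}}$-multiple of the listed generators. Using the description of $k^{C_{4}}_{\bigstar}$, any such $\alpha$ (not a nonnegative power of $u_{\sigma}$, and with the $a_{\sigma}u_{\sigma}^{i}$ already deleted) is either a multiple of $a_{\lambda}$ (which absorbs the $a_{\sigma}^{2}/a_{\lambda}^{j}$-multiples and the classes $\theta/(a_{\sigma}^{i}u_{\sigma}^{j})$ with $i\ge 1$), a multiple of $u_{\lambda}/u_{\sigma}^{i}$ (which absorbs $(a_{\sigma}u_{\lambda})/u_{\sigma}^{i}=a_{\sigma}(u_{\lambda}/u_{\sigma}^{i})$), or a transfer $\alpha=\Tr^{4}_{2}(\bar\alpha)$ of $\theta/u_{\sigma}^{j}$-type. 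In the first two cases $\alpha e^{j\rho+\sigma}$ is visibly a multiple of $a_{\lambda}e^{j\rho+\sigma}$ or of $(u_{\lambda}/u_{\sigma}^{i})e^{j\rho+\sigma}$. For the transfer case I would use naturality of the spectral sequence under $\Tr^{4}_{2}$ together with the middle-level $E_{\infty}$-description of subsection~\ref{MidLevelComp}: the surviving transfers in row $4j+1$ are built from the surviving middle-level classes $\tilde e^{j,a}$, $e^{j,au}$, $\bar e^{j,u}$, and the Mackey-functor formulas of Proposition~\ref{BiggestProposition} (such as $\Tr^{4}_{2}(\bar u_{\sigma}^{-i}(\sqrt{\bar a_{\lambda}\bar u_{\lambda}}\bar e^{u})/\bar u_{\lambda})=a_{\sigma}e^{u}/u_{\sigma}^{i+1}$) rewrite them in terms of $a_{\sigma}(u_{\lambda}/u_{\sigma}^{i})e^{j\rho+\sigma}$ and $a_{\lambda}\bar u_{\sigma}^{-i}e^{j\rho+\lambda+1}$, hence inside the span of the listed generators. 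Finally I would record the two relations, already noted above.

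The one genuinely delicate point is this last reduction of the transfer classes: $k^{C_{4}}_{\bigstar}$ carries infinitely many transfers, and one must verify that not one of them contributes a module generator outside the stated list — and this is compounded by the fact that several of the relevant classes $\alpha e^{j\rho+\sigma}$ are transfers of $E_{1}$-elements that do \emph{not} survive to $E_{2}$ at the middle level, so the argument has to be run with the Frobenius relations in hand rather than by a naive "transfer of a surviving middle-level class". In a full write-up I would first tabulate all transfers of $k^{C_{2}}_{\bigstar}$ explicitly and then dispatch each family against the list in turn.
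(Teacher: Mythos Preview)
Your overall plan is exactly the paper's: the corollary is bookkeeping from the $E_2$ description plus the two propositions on differentials. The one genuine problem is in your handling of the transfer classes $\alpha e^{j\rho+\sigma}$. You invoke the Mackey-functor formulas of Proposition~\ref{BiggestProposition} to rewrite them --- but that proposition is the final result whose proof passes through this very corollary, so the appeal is circular. Your self-flagged ``delicate point'' about running the argument with Frobenius relations in hand is therefore not just delicate, it is not available at this stage.

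The fix is much simpler than what you propose, and it is what the paper has in mind when it writes ``In conclusion''. Every transfer $\alpha\in k^{C_4}_{\bigstar}$ is \emph{already} a multiple of $a_{\lambda}$ or of some $u_{\lambda}/u_{\sigma}^{i}$, purely from the explicit description of $k^{C_4}_{\bigstar}$ in subsection~\ref{ROC4HomolPt}. Indeed, the $\theta$-classes sit in the summand $k[a_{\lambda}^{\pm}]\{\theta/(a_{\sigma}^{i}u_{\sigma}^{j})\}$, so $\theta/u_{\sigma}^{j}=a_{\lambda}\cdot(\theta a_{\lambda}^{-1}/u_{\sigma}^{j})$ is an $a_{\lambda}$-multiple; the $x_{n,m}$- and $s$-type classes lie in summands where the $a_{\lambda}$-denominator index is unbounded, so they are $a_{\lambda}$-multiples for the same reason; and transfers of the form $a_{\sigma}u_{\lambda}/u_{\sigma}^{i}=\Tr^{4}_{2}(\sqrt{\bar a_{\lambda}\bar u_{\lambda}}\,\bar u_{\sigma}^{-i+1})$ are visibly $u_{\lambda}/u_{\sigma}^{i}$-multiples. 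More uniformly: once one checks (from the list of generators of $k^{C_4}_{\bigstar}$) that every element not of the form $u_{\sigma}^{m}$ or $a_{\sigma}u_{\sigma}^{m}$ lies in the ideal $(a_{\lambda})+\sum_{i}(u_{\lambda}/u_{\sigma}^{i})$, the surviving part of row $4j+1$ is tautologically generated by $a_{\lambda}e^{j\rho+\sigma}$ and $(u_{\lambda}/u_{\sigma}^{i})e^{j\rho+\sigma}$, with no separate transfer case needed.
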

 %For example, $a_{\sigma}^2e^{j\rho+\sigma}=(a_{\sigma}^2/a_{\lambda})a_{\lambda}e^{j\rho+\sigma}$.
 
 \subsection{Coherent lifts}
 If we have a top level element $\alpha\in E_\infty^{s,V}$ and $E_\infty^{t,V}=0$ for $t>s$ then $\alpha$ lifts uniquely to $k^{\bigstar}_{C_4}(B_{C_4}\Sigma_{2+})$. If on the other hand $E_\infty^{t,V}\neq 0$ for some $t>s$, then there are multiple choices of lifts of $\alpha$. 
 
 When it comes to fractions $y/x$, we should make sure our choices of lifts are "coherent". Let us explain what that means with an example: The element  $u_\lambda e^\sigma$ has a unique lift $x_0$ while $(u_\lambda/u_\sigma^i)e^\sigma$ has multiple distinct lifts if $i\ge 5$. If we choose $x_i$ to lift $(u_\lambda/u_\sigma^i)e^\sigma$ then it will always be true that $u_\sigma^ix_i=x_0$; however, we shouldn't write $x_i=x_0/u_\sigma^i$ unless we can also guarantee that:
 \begin{equation}
 u_\sigma x_i=x_{i-1}
 \end{equation}
 This expresses the coherence of fractions (also discussed in subsection \ref{ROC4HomolPt} and Appendix \ref{AppendixPoint}) which is the cancellation property:
 \begin{equation}
 u_\sigma \frac{u_\lambda}{u_\sigma^i}e^\sigma=\frac{u_\lambda}{u_\sigma^{i-1}}e^\sigma
 \end{equation}
 This holds on $E_\infty$ and we also want it to hold on $k^{\bigstar}_{C_4}(B_{C_4}\Sigma_{2+})$.
 
 One more property enjoyed by the $(u_\lambda/u_\sigma^i)e^\sigma$ is that $a_\sigma^2(u_\lambda/u_\sigma^i)e^\sigma=0$; it turns out that there are unique lifts $x_i$ of $(u_\lambda/u_\sigma^i)e^\sigma$ such that $a_\sigma^2x_i=0$ and those lifts also satisfy the coherence property $u_\sigma x_i=x_{i-1}$:

  \begin{prop}\label{HalfCoh}For $i,j\ge 0$, there are unique lifts $e^{j,u}/u_\sigma^i, e^{j\rho+\lambda}/u_\sigma^i$ of the elements $(u_\lambda/u_\sigma^i) e^{j\rho+\sigma}, \bar u_\sigma^{-i}e^{j\rho+\lambda}$ respectively that satisfy:
	\begin{gather}
		a_\sigma^2\frac{e^{j,u}}{u_\sigma^i}=0\\
		a_\sigma^2\frac{e^{j\rho+\lambda}}{u_\sigma^i}=0
	\end{gather}
These lifts are also coherent.
\end{prop}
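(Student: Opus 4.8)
The plan is to work with the cellular filtration of $B_{C_4}\Sigma_2$ and the computed $E_\infty$ page (Corollary \ref{EinftyGen}). Fix $i,j$ and let $z$ be one of the two $E_\infty$ generators $(u_\lambda/u_\sigma^i)e^{j\rho+\sigma}$ or $\bar u_\sigma^{-i}e^{j\rho+\lambda}$, sitting in internal degree $V$ and filtration $s$ (so $s=4j+1$ or $s=4j+2$). Writing $M=k^{\bigstar}_{C_4}(B_{C_4}\Sigma_{2+})$, the set of lifts of $z$ to $M^V$ is a torsor over the subgroup $N:=F^{s+1}M^V$ of elements of filtration $>s$. Since $a_\sigma$ lies in filtration $0$ and $a_\sigma^2z=0$ already on $E_\infty$ — for the first family this is the Gold relation $a_\sigma^2u_\lambda=0$ together with coherence of fractions in $k^{\bigstar}_{C_4}$, and for the second family even $a_\sigma\,\bar u_\sigma^{-i}e^{j\rho+\lambda}=0$, which was used in the survival arguments above — we get $a_\sigma^2\ell\in F^{s+1}M^{V-2\sigma}$ for every lift $\ell$. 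Hence correcting $\ell$ to a lift killed by $a_\sigma^2$ amounts to solving $a_\sigma^2n=a_\sigma^2\ell$ for $n\in N$, and the Proposition is equivalent to the conjunction of: \emph{(i)} $a_\sigma^2\ell\in a_\sigma^2N$ (existence of the corrected lift), and \emph{(ii)} multiplication by $a_\sigma^2$ is injective on $N$ (uniqueness).

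For \emph{(ii)}, I would argue one associated-graded piece at a time, the point being to control the classes annihilated by $a_\sigma^2$. These come from two sources: the $C_4/C_2$-induced generators of $E_\infty$ (rows $\equiv2,3\pmod4$, together with their $\sqrt{\bar a_\lambda\bar u_\lambda}$-multiples), on which $a_\sigma$ and hence $a_\sigma^2$ act as $0$ because $\Res^4_2a_\sigma=0$; and the untwisted generators $e^{j'\rho}$ and the survivors $a_\lambda e^{j'\rho+\sigma},(u_\lambda/u_\sigma^{i'})e^{j'\rho+\sigma}$ multiplied by an $a_\sigma^2$-torsion coefficient of $k^{\bigstar}_{C_4}$ (e.g.\ a $\theta$-divisible class, or a product hitting a relation such as $(u_\lambda/u_\sigma^{i'})(a_\sigma^2/a_\lambda^{j'})=0$). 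The technical heart is an $RO(C_4)$-degree bookkeeping, using Corollary \ref{EinftyGen} and the relations of subsection \ref{ROC4HomolPt}, showing that in the specific degree $V$ and filtrations $>s$ at hand the induced-type contributions all carry coefficients in $\ker\Res^4_2$ (i.e.\ divisible by $a_\sigma$ or by $\theta$, both of which act as $0$ on those summands, since $\Res^4_2\theta=0$ as well) and so actually vanish, while on the remaining untwisted contributions the relevant coefficient degrees lie in the part of $k^{\bigstar}_{C_4}$ on which $a_\sigma^2$ is a non-zero-divisor. Granting this, a nonzero $n\in N$ has nonzero image in some $E_\infty^{t,V}$ with $t>s$ on which $a_\sigma^2$ is injective, whence $a_\sigma^2n\neq0$; this gives \emph{(ii)}. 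Claim \emph{(i)} then falls out of the same analysis by producing the correcting $n$ directly from the leading term of $a_\sigma^2\ell$; alternatively, one reduces to the cases $i=0$ (where the lifts $e^{j,u}$, $e^{j\rho+\lambda}$ are already pinned down) using that $e^{j,u}$ and $e^{j\rho+\lambda}$ are $u_\sigma$-divisible with $a_\sigma^2e^{j,u}=a_\sigma^2e^{j\rho+\lambda}=0$, together with injectivity of $u_\sigma$ in the relevant degree.

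Once \emph{(i)} and \emph{(ii)} hold, coherence is a formal consequence of uniqueness: $u_\sigma\cdot(e^{j,u}/u_\sigma^i)$ is a lift of $u_\sigma\cdot(u_\lambda/u_\sigma^i)e^{j\rho+\sigma}=(u_\lambda/u_\sigma^{i-1})e^{j\rho+\sigma}$ — coherence already holds on $E_\infty$ and the leading term is nonzero there — and it satisfies $a_\sigma^2\bigl(u_\sigma\cdot(e^{j,u}/u_\sigma^i)\bigr)=u_\sigma\cdot a_\sigma^2(e^{j,u}/u_\sigma^i)=0$; by the uniqueness statement it must therefore equal $e^{j,u}/u_\sigma^{i-1}$, and the identical argument handles the family $e^{j\rho+\lambda}/u_\sigma^i$. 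The main obstacle is the degree-and-filtration bookkeeping underlying \emph{(i)} and \emph{(ii)} — concretely, verifying that no $a_\sigma^2$-torsion class, induced or $\theta$-divisible, survives in $N$ in the degrees in question; everything after that is formal.
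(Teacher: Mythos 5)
Your framework is essentially the one the paper uses: lifts form a torsor over $N=F^{s+1}M^{V}$, existence amounts to an iterative correction of leading terms in $E_{\infty}$, uniqueness to injectivity of $a_{\sigma}^{2}$-multiplication on $N$, and coherence then follows formally from uniqueness by applying $u_{\sigma}$ and comparing. That last step is exactly the paper's, and is fine. However, the proposal leaves the actual content of the proof — what you yourself call ``the technical heart'' — undone, and the one place where the argument is genuinely delicate is not even flagged.

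Concretely: for the family $(u_{\lambda}/u_{\sigma}^{i})e^{j\rho+\sigma}$ one has to rule out that $a_{\sigma}^{2}\ell$ has a nonzero leading term in filtration $4j+3$. In the degree of $a_{\sigma}^{2}\ell$, the group $E_{\infty}^{4j+3,\bullet}$ is nonzero — it is spanned by $\bar u_{\sigma}^{3-i}e^{j\rho+\lambda+1}$, an induced-type generator — and since this generator is a transfer it is killed by $a_{\sigma}$. So this is precisely an $a_{\sigma}^{2}$-torsion class in $N$ (in the relevant target degree) that cannot be absorbed by subtracting an $a_{\sigma}^{2}$-divisible term; if $a_{\sigma}^{2}\ell$ lands there, the correction procedure halts. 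Your claim that ``the induced-type contributions all carry coefficients in $\ker\Res^{4}_{2}$ \dots and so actually vanish'' is exactly the assertion that needs proof here and it does not follow from generalities; the paper handles it with a separate, nonformal argument (multiply by $u_{\sigma}^{i}$ to reduce to $i=0$, then observe $(a_{\sigma}^{2}/a_{\lambda})\,u_{\sigma}^{i}\alpha=0$ for degree reasons, whence $a_{\sigma}^{2}u_{\sigma}^{i}\alpha=0$). Only after this is established does the paper's inductive correction — and the identification of every higher-filtration coefficient as a fraction with $a_{\sigma}^{2}$ in its denominator — go through.

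The fallback you offer for existence (reduce to $i=0$ using $a_{\sigma}^{2}e^{j,u}=0$ and ``injectivity of $u_{\sigma}$'') does not work as stated: if $\alpha,\alpha'$ are two lifts then $u_{\sigma}^{i}(\alpha-\alpha')=0$, since the lift of $u_{\lambda}e^{j\rho+\sigma}$ is unique, so $u_{\sigma}^{i}$-multiplication is definitely not injective on the lift ambiguity; and if $a_{\sigma}^{2}$ killed every lift of $(u_{\lambda}/u_{\sigma}^{i})e^{j\rho+\sigma}$, uniqueness would fail. So the alternative route actually contradicts the statement you are trying to prove, and the only way through is to do the filtration-by-filtration analysis your proposal defers.
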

\begin{proof}%Existence: $\Tr_2^4(e^{j,au}\bar u_{\sigma}^{-i+1})$ has trivial restriction hence by $Im(a_\sigma)=Ker(\Res)$, there is an element $x\in k_{C_4}^{\bigstar}(B_{C_4}\Sigma_{2+})$ such that $a_\sigma x=\Tr_2^4(e^{j,au}\bar u_{\sigma}^{-i+1})$. This equality on $E_\infty$ shows that $x$ lifts $(u_\lambda/u_\sigma^i) e^{j\rho+\sigma}$.

Fix $i,j\ge 0$. We first deal with lifts of $(u_\lambda/u_\sigma^i) e^{j\rho+\sigma}$.

Existence: Fix $\bigstar$ to be the $RO(C_4)$ degree of $(u_\lambda/u_\sigma^i) e^{j\rho+\sigma}$ and write $F^s$ for the decreasing filtration on $k^{\bigstar}_{C_4}(B_{C_4}\Sigma_{2+})$ defining the spectral sequence, namely:
\begin{equation}
E_\infty^{s,\bigstar}=F^s/F^{s+1}
\end{equation}
We start with any random lift $\alpha_0\in F^{4j+1}$ of $(u_\lambda/u_\sigma^i) e^{j\rho+\sigma}$; if $a_\sigma^2\alpha_0=0$ then we are done. Otherwise take $s_0$ maximal with $a_\sigma^2\alpha_0\in F^{s_0}$; since $a_\sigma^2(u_\lambda/u_\sigma^i)e^{j\rho+\sigma}=0$ we have $s_0>4j+1$. In fact $s_0>4j+2$ since $E^{4j+2,\bigstar}=0$.\\
We now prove that $s_0>4j+3$: $E^{4j+3,\bigstar}$ is spanned by $\bar u_\sigma^{3-i}e^{j\rho+\lambda+1}$ so we need to investigate the possibility $a_\sigma^2\alpha=\bar u_\sigma^{3-i}e^{j\rho+\lambda+1}$ on $E^{4j+3,\bigstar}$. Multiplying by $u_\sigma^i$ reduces us to the case $i=0$, where $u_\sigma^i\alpha$ is the unique lift of $u_\lambda e^{j\rho+\sigma}$. But we can see directly that $(a_\sigma^2/a_\lambda)u_\sigma^i\alpha=0$ for degree reasons, hence $a_\sigma^2u_\sigma^i\alpha=0$ as well.\\
As $s_0>4j+3$, we can see directly that $F^{s_0}/F^{s_0+1}=E_\infty^{s_0,\bigstar}$ is generated by an element $\beta e^V$ where $\beta\in k^{\bigstar-V}_{C_4}$ is divisible by $a_\sigma^2$. If $\alpha'\in F^{s_0}$ is a lift of $(\beta/a_\sigma^2)e^V$ then $\alpha_1=\alpha_0+\alpha'$ is a lift of $(u_\lambda/u_\sigma^i) e^{j\rho+\sigma}$. If $a_\sigma^2\alpha_1=0$ then we are done, otherwise $a_\sigma^2\alpha_1\in F^{s_1}$ for $s_1>s_0$ so we get $\alpha_2$ by the same argument as above. Since $F^s=0$ for large enough $s$, this inductive process will eventually end with the desired lift.
	
Uniqueness: If $\alpha,\alpha'$ are two lifts of $(u_\lambda/u_\sigma^i) e^{j\rho+\sigma}$ then their difference is a finite sum $p$ of elements $\beta'e^V$ where each $\beta'\in  k^{\bigstar}_{C_4}$ is a fraction with $a_\sigma^2$ in its denominator. If $a_\sigma^2\alpha=a_\sigma^2\alpha'=0$ then $a_\sigma^2p=0\implies a_\sigma^2 \beta'=0\implies \beta'=0\implies p=0$.
	
Coherence: Unfix $i$ and let $x_i$ be the lift of $(u_\lambda/u_\sigma^i) e^{j\rho+\sigma}$ with $a_\sigma^2x_i=0$. Then $u_\sigma x_i$ is a lift of $(u_\lambda/u_\sigma^{i-1}) e^{j\rho+\sigma}$ and $a_\sigma^2(u_\sigma x_i)=0$ hence by uniqueness: $$u_\sigma x_i=x_{i-1}$$

The case of $\bar u_\sigma^{-i}e^{j\rho+\lambda}$ is near identical to what we did above for $(u_\lambda/u_\sigma^i) e^{j\rho+\sigma}$. The changes are as follows. First, $s_0>4j+2$ (instead of $s_0>4j+1$). Next we can see that $s_0>4j+4$ if $i>1$, and multiplying by $u_\sigma$ also proves the $i=0,1$ cases (this replaces the argument that showed $s_0>4j+3$). The rest of the arguments are identical.
\end{proof}

\subsection{Top level generators}\label{TopLevelGeneration}

The elements $e^{j\rho}$ have unique lifts to $k^{\bigstar}_{C_4}(B_{C_4}\Sigma_{2+})$ that we continue to denote by $e^{j\rho}$.

On the other hand, for each $j\ge 0$ there are two possible lifts of $a_{\lambda}e^{j\rho+\sigma}$. There is no good way to make a  unique choice at this point, so we shall write $e^{j,a}$ for either. 

In this subsection we shall prove: 
  \begin{prop}\label{Generation}The $k^{\bigstar}_{C_4}$ module $k^{\bigstar}_{C_4}(B_{C_4}\Sigma_{2+})$  is generated by
	\begin{equation}
	e^{j\rho}, e^{j,a},\frac{e^{j,u}}{u_{\sigma}^i},\frac{e^{j\rho+\lambda}}{u_{\sigma}^i}\end{equation}
	where $i,j\ge 0$.
\end{prop}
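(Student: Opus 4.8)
The plan is to run the standard ``lift from $E_\infty$'' argument for the filtration $F^\bullet$ defining our spectral sequence. The key preliminary observation is that in each fixed $RO(C_4)$-degree this filtration is \emph{finite}: the groups $k^W_{C_4}$ and $k^W_{C_2}$ are finite and vanish once $W$ is sufficiently negative in all coordinates, so for fixed $\bigstar$ only finitely many of the rows $E_1^{\bigstar,s}$ (which involve $k^{\bigstar-j\rho}_{C_4}$, $k^{\bigstar-j\rho-\sigma}_{C_4}$, $(k^{\bigstar-j\rho-\lambda})_{C_4/C_2}$, $(k^{\bigstar-j\rho-\lambda-1})_{C_4/C_2}$) are nonzero; hence $F^s_\bigstar=0$ for $s\gg 0$. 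With this in hand there is no convergence subtlety, and I would record the elementary lemma: a set of elements of $M:=k^{\bigstar}_{C_4}(B_{C_4}\Sigma_{2+})$ generates $M$ over $R:=k^{\bigstar}_{C_4}$ (which sits in filtration $0$) as soon as the associated graded of the $R$-submodule they span exhausts $E_\infty$; equivalently, by downward induction on $s$, as soon as every homogeneous $R$-module generator of $E_\infty$ is the leading term of some element of that submodule.

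Next I would dispose of the easy generators. By Corollary \ref{EinftyGen}, $E_\infty$ is generated over $R$ by the families $e^{j\rho}$, $a_\lambda e^{j\rho+\sigma}$, $\tfrac{u_\lambda}{u_\sigma^i}e^{j\rho+\sigma}$ and $\bar u_\sigma^{-i}\sqrt{\bar a_\lambda\bar u_\lambda}^{\,\epsilon}e^{j\rho+\lambda+\epsilon'}$. Four of these occur directly as leading terms of the claimed generators: $e^{j\rho}$ is the leading term of its unique lift, $a_\lambda e^{j\rho+\sigma}$ of $e^{j,a}$, and --- this is precisely the role of Proposition \ref{HalfCoh} --- $\tfrac{u_\lambda}{u_\sigma^i}e^{j\rho+\sigma}$ and $\bar u_\sigma^{-i}e^{j\rho+\lambda}$ are the leading terms of the coherent lifts $e^{j,u}/u_\sigma^i$ and $e^{j\rho+\lambda}/u_\sigma^i$. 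So the entire content of the proposition is to show that the remaining $E_\infty$-generators $\bar u_\sigma^{-i}\sqrt{\bar a_\lambda\bar u_\lambda}^{\,\epsilon}e^{j\rho+\lambda+\epsilon'}$ with $(\epsilon,\epsilon')\neq(0,0)$ also occur as leading terms of elements of the submodule $N$ spanned by $e^{j\rho},\,e^{j,a},\,e^{j,u}/u_\sigma^i,\,e^{j\rho+\lambda}/u_\sigma^i$.

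These ``mixed'' families live in the rows $4j+2$ and $4j+3$, which come from $(k^\bigstar)_{C_4/C_2}$ and are therefore $R$-modules via $\Res^4_2$. I would exploit two features of the ground Green functor: (i) $\Res^4_2(a_\sigma)=\bar a_\sigma=0$ (the Euler class of a trivial representation is null; equivalently $k^\bigstar_{C_2}=k^\bigstar_{C_4}[u_\sigma^{-1}]/a_\sigma\,\{1,\sqrt{\bar a_\lambda\bar u_\lambda}\}$), so those rows are $a_\sigma$-torsion and multiplying a generator whose leading term lies in such a row by $a_\sigma$ forces a jump in filtration; and (ii) Frobenius reciprocity together with $\Tr^4_2(\sqrt{\bar a_\lambda\bar u_\lambda})=a_\sigma u_\lambda/u_\sigma$, giving $\Tr^4_2\!\big(\sqrt{\bar a_\lambda\bar u_\lambda}\cdot\Res^4_2 z\big)=\tfrac{a_\sigma u_\lambda}{u_\sigma}\,z$ for any top-level $z$, so this is again an $R$-multiple of $z$ and lies in $N$ whenever $z$ does. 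Using (i): since $a_\sigma\bar u_\sigma^{-i}e^{j\rho+\lambda}=0$ on $E_\infty$ (already used in the proof that $\bar u_\sigma^{-i}e^{j\rho+\lambda}$ is a permanent cycle), the element $a_\sigma\, e^{j\rho+\lambda}/u_\sigma^{i+1}\in N$ has filtration $\ge 4j+3$; comparing its $RO(C_4)$-degree with the generators of $E_\infty^{\bullet,4j+3}$ and using $v\bar u_\sigma^2 e^{j\rho+\lambda+1}=0$ identifies its leading term with $\bar u_\sigma^{-i}e^{j\rho+\lambda+1}$ modulo $F^{4j+4}$, which is already in $N$ by the downward induction --- this settles the $(\epsilon,\epsilon')=(0,1)$ family. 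The $\sqrt{\bar a_\lambda\bar u_\lambda}$-families are then obtained by feeding the classes produced so far (and suitable $a_\sigma$-multiples of them, to exploit the same filtration jumps) into the Frobenius formula of (ii) and again matching $RO(C_4)$-degrees, now against the generators with $\epsilon=1$, whose $\lambda$-weight is one unit higher.

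The step I expect to be the main obstacle is the degree-matching/uniqueness part of the last paragraph: one must check that each of these products or transfers has leading term \emph{exactly} the intended $E_\infty$-generator, and not a nontrivial $R$-multiple of it or a class in still higher filtration. This is where the fine structure of $E_\infty$ (Corollary \ref{EinftyGen}), the annihilator of $e^{j\rho+\lambda}$ (the relation $v\bar u_\sigma e^{j\rho+\lambda}=0$), the vanishing $\bar a_\sigma=0$, and the multiplicative behaviour of $k^\bigstar_{C_4}$ and of $k^\bigstar_{C_2}$ --- in particular the fact that $k^\bigstar_{C_2}$ is not cyclic over $k^\bigstar_{C_4}$, which is exactly why the computation is organized around the divided classes $e^{j,u}/u_\sigma^i$ and $e^{j\rho+\lambda}/u_\sigma^i$ rather than $e^{j,u}$ and $e^{j\rho+\lambda}$ --- all enter. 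Once every mixed generator is accounted for, the preliminary lemma closes the argument.
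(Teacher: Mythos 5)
Your high-level framework --- treating the statement as an inductive leading-term problem for the finite filtration $F^\bullet_\bigstar$, identifying the four ``easy'' $E_\infty$-families as leading terms of the claimed generators, and using $\Res^4_2(a_\sigma)=0$ together with $a_\sigma$-multiplication to force filtration jumps into the rows $4j+2,\ 4j+3$ --- is sound and is indeed the engine behind Lemma \ref{Liftoflambdaplus1}. The gap is in the mechanism you propose for the $\sqrt{\bar a_\lambda\bar u_\lambda}$-families, specifically for $\bar u_\sigma^{-i}\sqrt{\bar a_\lambda\bar u_\lambda}e^{j\rho+\lambda}$. The Frobenius formula $\Tr_2^4\bigl(\sqrt{\bar a_\lambda\bar u_\lambda}\cdot\Res_2^4 z\bigr)=\tfrac{a_\sigma u_\lambda}{u_\sigma}z$ only ever produces transfers, but no lift of $\bar u_\sigma^{-i}\sqrt{\bar a_\lambda\bar u_\lambda}e^{j\rho+\lambda}$ can be a transfer: the Weyl group $C_4/C_2$ acts trivially on $k^{\bigstar}_{C_2}(B_{C_4}\Sigma_{2+})$, so $\Res_2^4\Tr_2^4=1+g=0$ and every transfer has zero restriction, whereas any lift of this $E_\infty$-class restricts to a class with nonzero leading term $\bar u_\sigma^{-i}\sqrt{\bar a_\lambda\bar u_\lambda}\bar e^{j\rho+\lambda}$ on the middle level. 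So this family cannot be reached by your route, and the proof does not close.

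What the paper does instead (Lemma \ref{Liftoflambdasqrt}) is to take the explicit $R$-linear combination
\begin{equation}
\frac{u_{\lambda}}{u_\sigma^{i+1}}e^{j,a}+a_{\lambda}\frac{e^{j,u}}{u_\sigma^{i+1}},
\end{equation}
whose two summands lie in filtration $4j+1$ with identical $E_\infty$-leading terms $\tfrac{u_\lambda a_\lambda}{u_\sigma^{i+1}}e^{j\rho+\sigma}$; the sum therefore drops into filtration $\ge 4j+2$, and its new leading term is identified with $\bar u_\sigma^{-i}\sqrt{\bar a_\lambda\bar u_\lambda}e^{j\rho+\lambda}$ by applying $\Res^4_2$ and invoking the exotic restriction $\Res^4_2(e^{j,a})=\tilde e^{j,a}+\bar u_\sigma e^{'j\rho+\lambda+1}$ of Lemma \ref{ResOfea}. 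Both the cancelling combination (which, unlike a Frobenius transfer, is not a transfer at all) and the exotic-restriction input are missing from your proposal, and they are the decisive ideas here. Once this family is in hand, the last family $\bar u_\sigma^{-i}\sqrt{\bar a_\lambda\bar u_\lambda}e^{j\rho+\lambda+1}$ does follow by multiplying by $a_\sigma$, in the spirit of what you describe (Lemma \ref{Liftoflambdaplus1sqrt}).
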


By Corollary \ref{EinftyGen} it suffices to prove that the $k^{\bigstar}_{C_4}$-algebra generated by the elements $e^{j\rho},e^{j,a},e^{j,u}/u_\sigma^i, e^{j\rho+\lambda}/u_\sigma^i$ contains lifts of the elements $$\bar u_\sigma^{-i}e^{j\rho+\lambda+1},\sqrt{\bar a_{\lambda}\bar u_{\lambda}}\bar u_\sigma^{-i}e^{j\rho+\lambda}, \sqrt{\bar a_{\lambda}\bar u_{\lambda}}\bar u_\sigma^{-i}e^{j\rho+\lambda+1}\in E_\infty$$

\begin{lem}\label{Liftoflambdaplus1}The elements 
	\begin{equation}
		\frac{e^{j\rho+\lambda+1}}{u_\sigma^i}:=\Tr_2^4(\bar u_\sigma^{-i}e^{'j\rho+\lambda+1})
	\end{equation}
are coherent lifts of $\bar u_\sigma^{-i}e^{j\rho+\lambda+1}\in E_\infty$. Furthermore,
	\begin{equation}
	a_\sigma \frac{e^{j\rho+\lambda}}{u_\sigma^i}= \frac{e^{j\rho+\lambda+1}}{u_\sigma^{i-1}}
\end{equation}
\end{lem}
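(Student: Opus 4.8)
The statement has two parts: first that the transferred elements $\Tr_2^4(\bar u_\sigma^{-i}e^{'j\rho+\lambda+1})$ are coherent lifts of $\bar u_\sigma^{-i}e^{j\rho+\lambda+1}\in E_\infty$, and second the divided-power relation $a_\sigma (e^{j\rho+\lambda}/u_\sigma^i) = e^{j\rho+\lambda+1}/u_\sigma^{i-1}$. For the first part, I would argue that $\Tr_2^4$ is compatible with the filtration defining the spectral sequence: the filtration on $B_{C_4}\Sigma_{2+}$ is by $C_4$-subspaces, so the transfer preserves the associated graded, and on $E_\infty$ it induces the algebraic transfer $\Tr_2^4$ of Mackey functors. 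Since $\bar u_\sigma^{-i}e^{'j\rho+\lambda+1}$ lives in filtration $4j+3$ and restricts (in the $E_1$-level sense) from middle to the $(x)$-component, its transfer is exactly $\bar u_\sigma^{-i}e^{j\rho+\lambda+1}(x+gx)$ by the Mackey-functor formula $\Tr_2^4(\text{mid elt}) = x+gx$ recorded in the description of $M_{C_4/C_2}$; by the middle-level computation in subsection \ref{MidLevelComp}, $e^{'j\rho+\lambda+1}$ really does survive to $k^{\bigstar}_{C_2}(B_{C_4}\Sigma_{2+})$, so its transfer is a genuine class lifting $\bar u_\sigma^{-i}e^{j\rho+\lambda+1}\in E_\infty$. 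Coherence, i.e. $u_\sigma \cdot (e^{j\rho+\lambda+1}/u_\sigma^i) = e^{j\rho+\lambda+1}/u_\sigma^{i-1}$, then follows formally from the projection formula $u_\sigma\cdot \Tr_2^4(y) = \Tr_2^4(\bar u_\sigma \cdot y)$ together with the coherence of the fractions $\bar u_\sigma^{-i}e^{'j\rho+\lambda+1}$ already established (uniquely, by the torsion-free middle level) in subsection \ref{MidLevelComp}.

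\textbf{The divided-power relation.} This is the part needing real work. One first establishes it in $E_\infty$, then lifts. On $E_\infty$: in subsection \ref{d1comp} the $d^1$ on row $4j+2$ was $d^1(\bar e^{j\rho+\lambda}x) = \bar e^{j\rho+\lambda+1}(x+gx)$ at middle level, and on top level one has the Mackey relation; tracking the module structure over $k_{C_4}^{\bigstar}$, multiplication by $a_\sigma$ sends $\bar u_\sigma^{-i}e^{j\rho+\lambda}$ (top level, a transfer-type class) to $\bar u_\sigma^{-(i-1)}e^{j\rho+\lambda+1}$ up to the unit in $k^{1-\sigma}$, because $a_\sigma$ times a $(x+gx)$-class equals the transfer of the restricted class, and on the middle level $a_{\sigma_2}\bar e^{\lambda} = a_{\sigma_2}(a_{\sigma_2}c + u_{\sigma_2}b) = a_{\sigma_2}u_{\sigma_2}b$ up to the Gold/nilpotence relations, which matches $u_{\sigma_2}\cdot cb$-type expressions — so the relation holds at the $E_\infty$ level. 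To lift: both sides are honest elements of $k^{\bigstar}_{C_4}(B_{C_4}\Sigma_{2+})$ once we use the chosen lifts $e^{j\rho+\lambda}/u_\sigma^i$ from Proposition \ref{HalfCoh} and the lifts $e^{j\rho+\lambda+1}/u_\sigma^{i-1}$ just constructed. Their difference $p$ lies in higher filtration, i.e. is a sum of classes $\beta e^V$ of strictly larger filtration than $4j+3$; I would then show $p=0$ by the same uniqueness mechanism as in the proof of Proposition \ref{HalfCoh} — namely multiply through by $a_\sigma$ (or by $u_\sigma^{i-1}$ to reduce to $i=1$) and use $a_\sigma^2(e^{j\rho+\lambda}/u_\sigma^i) = 0$ together with the fact that classes in filtration $\ge 4j+4$ in the relevant degree are $a_\sigma^2$-divisible, forcing $a_\sigma p = 0$ and hence $p=0$ in this degree.

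\textbf{Main obstacle.} The delicate point is not the $E_\infty$-level identity but pinning down that the lift is \emph{exactly} $e^{j\rho+\lambda+1}/u_\sigma^{i-1}$ with the right (unit) coefficient and no correction term in higher filtration. The filtration-jump argument has to be done in a specific $RO(C_4)$-degree, and one must verify that the only possible correction terms, living in filtration $4j+4$ and above, are killed by multiplication by $a_\sigma$ — this is where one leans on Proposition \ref{HalfCoh}, on Corollary \ref{EinftyGen}'s list of $E_\infty$-generators in that degree, and on the structure of $k^{\bigstar}_{C_4}$ (that $a_\sigma$-torsion in the relevant degrees forces the coefficient to vanish). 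A secondary subtlety is the ambiguity in the choice of $e^{j,a}$ (two lifts of $a_\lambda e^{j\rho+\sigma}$), but since neither $e^{j,a}$ nor $a_\lambda$ enters this particular relation, it does not interfere here; one only needs the coherent lifts of the $\lambda$- and $(\lambda+1)$-classes, both of which are now uniquely pinned down.
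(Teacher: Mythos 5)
The first part of your argument (that the transferred elements are coherent lifts) is fine and matches the paper's reasoning: the transfer is filtration-preserving, $e^{'j\rho+\lambda+1}$ survives by the middle-level computation, and coherence is the Frobenius/projection formula. No issues there.

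The second part has a genuine gap. You assert that the relation $a_\sigma\,\frac{e^{j\rho+\lambda}}{u_\sigma^i}=\frac{e^{j\rho+\lambda+1}}{u_\sigma^{i-1}}$ ``holds at the $E_\infty$ level,'' but it does not. On $E_1=E_\infty$ in row $4j+2$, the top level is $k^{\bigstar}_{C_2}\{e^{j\rho+\lambda}(x+gx)\}$, a $k^{\bigstar}_{C_4}$-module via $\Res^4_2$; since $\Res^4_2(a_\sigma)=0$, multiplication by $a_\sigma$ kills $\bar u_\sigma^{-i}e^{j\rho+\lambda}$ on $E_\infty$. Equivalently, on $E_\infty$ a class which is a genuine transfer satisfies $a_\sigma\Tr(\beta)=\Tr(\Res(a_\sigma)\beta)=0$, so ``$a_\sigma$ times a $(x+gx)$-class equals the transfer of the restricted class'' is wrong---it equals zero. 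Hence the relation you are trying to prove is a nontrivial module extension across a filtration jump (from $4j+2$ to $4j+3$), not something that can be read off from $E_\infty$.

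Because of this, your proposed lifting argument cannot close the loop: you want to compare $a_\sigma(e^{j\rho+\lambda}/u_\sigma^i)$ with $e^{j\rho+\lambda+1}/u_\sigma^{i-1}$, argue their difference $p$ lies in higher filtration, and kill it by multiplying by $a_\sigma$. But to know that the two sides agree modulo filtration $\geq 4j+4$, you would already have to know that the projection of $a_\sigma(e^{j\rho+\lambda}/u_\sigma^i)$ to $E_\infty^{4j+3,\bigstar}$ is the nonzero class $\bar u_\sigma^{-(i-1)}e^{j\rho+\lambda+1}$ rather than $0$; a filtration argument by itself cannot decide between coefficient $0$ and $1$ here. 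This is exactly the step where the paper brings in a new ingredient absent from your proposal: $e^{j\rho+\lambda}$ is not in the image of $\Tr_2^4$ (visible on $E_\infty$), and by the exactness $\operatorname{Ker}(a_\sigma)=\operatorname{Im}(\Tr_2^4)$ in $k^{\bigstar}_{C_4}(B_{C_4}\Sigma_{2+})$ it follows that $a_\sigma e^{j\rho+\lambda}\neq 0$, forcing the extension $a_\sigma e^{j\rho+\lambda}=u_\sigma e^{j\rho+\lambda+1}$. Then $a_\sigma^2(e^{j\rho+\lambda}/u_\sigma^i)=0$ (Proposition \ref{HalfCoh}) shows $a_\sigma(e^{j\rho+\lambda}/u_\sigma^i)$ is a transfer, multiplying by $u_\sigma^i$ shows it is nonzero, and the only nonzero transfer in that degree is $\Tr_2^4(\bar u_\sigma^{-i+1}e^{'j\rho+\lambda+1})$. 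You should replace the ``holds on $E_\infty$'' claim with this non-transfer/exactness argument; as written, your proof would equally well allow the conclusion $a_\sigma e^{j\rho+\lambda}=0$.
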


\begin{proof}We see directly that $\Tr_2^4(\bar u_\sigma^{-i}e^{'j\rho+\lambda+1})$ lift $\bar u_\sigma^{-i}e^{j\rho+\lambda+1}$ and coherence follows from the Frobenius relations.
	
Next, we see directly from the $E_\infty$ page that $e^{j\rho+\lambda}$ is not in the image of the transfer $\Tr_2^4$. Since $Ker(a_\sigma)=Im(\Tr_2^4)$ in  $k^{\bigstar}_{C_4}(B_{C_4}\Sigma_{2+})$, we must have a module extension of the form:
\begin{equation}
	a_\sigma e^{j\rho+\lambda}=u_\sigma e^{j\rho+\lambda+1}
\end{equation}
By Proposition \ref{HalfCoh}, $a_\sigma^2 e^{j\rho+\lambda}/u_\sigma^i= 0$ hence $a_\sigma e^{j\rho+\lambda}/u_\sigma^i$ is a transfer. The equation above shows that $a_\sigma e^{j\rho+\lambda}/u_\sigma^i\neq 0$ and the only way $a_\sigma e^{j\rho+\lambda}/u_\sigma^i$ can be a nonzero transfer is for $a_\sigma e^{j\rho+\lambda}/u_\sigma^i=\Tr_2^4(\bar u_\sigma^{-i+1}e^{'j\rho+\lambda+1})$.
\end{proof}

Before we can lift the rest of the $E_\infty$ generators, we will need the following exotic restriction:
\begin{lem}\label{ResOfea}Both choices of $e^{j,a}$ have the same (exotic) restriction:
		\begin{equation}
	\Res^4_2(e^{j,a})=\tilde e^{j,a}+\bar u_{\sigma}e^{'j\rho+\lambda+1}
	\end{equation}
\end{lem}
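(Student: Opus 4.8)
The plan is to pin down $\Res^4_2(e^{j,a})$ by using that the restriction map is compatible with the spectral sequence filtration and by exploiting the relation $\Ker(a_\sigma)=\Im(\Tr^4_2)$ on $k^{\bigstar}_{C_2}(B_{C_4}\Sigma_{2+})$ together with the Frobenius (projection) formula. First I would recall that $e^{j,a}$ is, by construction, a lift of the $E_\infty$ class $a_\lambda e^{j\rho+\sigma}$, which sits in filtration $4j+1$. Applying $\Res^4_2$ and passing to the associated graded of the middle-level spectral sequence, the leading term of $\Res^4_2(e^{j,a})$ must be $\Res^4_2(a_\lambda)\,\bar e^{j\rho+\sigma}=\bar a_\lambda\bar e^{j\rho+\sigma}$ in $E_\infty^{4j+1,\bigstar}$, and by the middle-level computation of subsection \ref{MidLevelComp} this class is exactly $\tilde e^{j,a}$ (the chosen lift of $\bar a_\lambda\bar e^{j\rho+\sigma}$ with trivial $\Res^2_1$). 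So $\Res^4_2(e^{j,a})=\tilde e^{j,a}+(\text{lower filtration correction})$, where the correction lives in $F^{4j+2}$ of the middle-level filtration.

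Next I would identify the possible corrections. In the relevant degree $V=j\rho+2\sigma$, the surviving $E_\infty$ classes in filtration $>4j+1$ on the middle level are spanned (after consulting the middle-level $E_\infty$ and the list in Proposition of subsection \ref{MidLevelComp}) by $\bar u_\sigma e^{'j\rho+\lambda+1}$; the classes $\bar e^{j\rho+\lambda}$ in filtration $4j+2$ do not occur in this degree for degree reasons. Hence $\Res^4_2(e^{j,a})=\tilde e^{j,a}+\epsilon\,\bar u_\sigma e^{'j\rho+\lambda+1}$ with $\epsilon\in\{0,1\}$, and the two choices of $e^{j,a}$ (which differ by $\Tr^4_2(u_\sigma e^{'j\rho+\lambda+1})$) have restrictions differing by $\Res^4_2\Tr^4_2(u_\sigma e^{'j\rho+\lambda+1})=(1+g)(u_\sigma e^{'j\rho+\lambda+1})=0$, using that the Weyl action on the middle level is trivial; so $\epsilon$ is independent of the choice, consistent with the statement.

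To compute $\epsilon$, I would apply $a_\sigma$: since $a_\lambda e^{j\rho+\sigma}$ is killed by $a_\sigma$ already on $E_\infty$ (indeed $a_\sigma a_\lambda=0$ need not hold, but $a_\sigma\cdot a_\lambda e^{j\rho+\sigma}$ has no room to be nonzero — one checks from Corollary \ref{EinftyGen} that the target degree $j\rho+2\sigma+\text{(shift)}$ contains no surviving class), the element $a_\sigma e^{j,a}$ is a transfer, hence $a_\sigma\Res^4_2(e^{j,a})=\Res^4_2(a_\sigma e^{j,a})$ restricts from a transfer and thus lies in $(1+g)k^{\bigstar}_{C_2}(\cdots)=0$ on the nose at the relevant spot — more usefully, I would instead use $a_\sigma\tilde e^{j,a}=0$ (which follows from $\tilde e^{j,a}=\bar u_\sigma^{j+1}a_{\sigma_2}e^{(2j+1)\rho_2}$ and $a_\sigma$ restricting to $a_{\sigma_2}$ with $a_{\sigma_2}^2e^{(2j+1)\rho_2}$ being nonzero only if it survives — here I need the precise $C_2$-module structure) versus $a_\sigma\cdot\bar u_\sigma e^{'j\rho+\lambda+1}=\bar u_\sigma\cdot a_\sigma e^{'j\rho+\lambda+1}$, which equals $\bar u_\sigma\bar u_\sigma^j a_{\sigma_2}e^{(2j+1)\rho_2+\sigma_2}=\bar u_\sigma^{j+1}a_{\sigma_2}cb\neq 0$ in the $c,b$ description. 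Comparing with the fact that $a_\sigma e^{j,a}$, being $a_\sigma$ times a lift of $a_\lambda e^{j\rho+\sigma}$, is nonzero (it is a nonzero transfer as in Lemma \ref{Liftoflambdaplus1}'s circle of ideas), forces $\epsilon=1$.

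The main obstacle I expect is the last step: rigorously showing $\epsilon\neq0$ rather than merely $\epsilon\in\{0,1\}$. The cleanest route is probably the one sketched in the proof of the preceding Proposition on $d^2(a_\sigma u_\sigma^i e^{j\rho+\sigma})$, which (the paper notes) already establishes $\Res^4_2(e^{j,a})=\tilde e^{j,a}+\bar u_\sigma e^{'j\rho+\lambda+1}$ independently of whether $a_\sigma e^{j\rho+\sigma}$ survives; so I would extract exactly that argument — lift $a_\lambda e^{j\rho+\sigma}$, observe both lifts share a restriction, compute that restriction in the associated graded as $\tilde e^{j,a}$, and then use a nontrivial module extension on the middle level (coming from $\Ker(a_\sigma)=\Im\Tr^4_2$ and the explicit relation $a_\sigma\bar e^{j\rho+\lambda}=\bar u_\sigma$-multiple, cf.\ Lemma \ref{Liftoflambdaplus1}) to force the correction term $\bar u_\sigma e^{'j\rho+\lambda+1}$ to be present. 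The bookkeeping of which $RO(C_4)$-degrees contain which surviving classes is the only genuinely fiddly part, and I would handle it by translating everything into the $c,b$ coordinates of subsection \ref{BC2S2} where the middle level is transparent.
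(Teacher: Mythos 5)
Your first two steps match the paper exactly: identifying the leading filtration term as $\tilde e^{j,a}$, noting that the only possible correction in $F^{4j+2}$ of the relevant degree is $\bar u_\sigma e^{'j\rho+\lambda+1}$, and observing that the two lifts of $a_\lambda e^{j\rho+\sigma}$ differ by a transfer so $\epsilon$ is well-defined. The trouble is the decisive step $\epsilon=1$, where you make a conceptual error. You write that $a_\sigma$ ``restricts to $a_{\sigma_2}$'' and then compute $a_\sigma\cdot\bar u_\sigma e^{'j\rho+\lambda+1}=\bar u_\sigma^{j+1}a_{\sigma_2}cb\neq 0$. But $\Res^4_2(a_\sigma)=0$: the $C_4$ sign representation $\sigma$ restricts trivially to $C_2\subset C_4$, so $S^\sigma|_{C_2}=S^1$ and the Euler class vanishes. (This is consistent with the middle level of $k^{\bigstar}$ in subsection \ref{ROC4HomolPt}, which contains $\bar a_\lambda,\bar u_\lambda,\sqrt{\bar a_\lambda\bar u_\lambda},\bar u_\sigma^{\pm}$ but no $\bar a_\sigma$.) The class $a_{\sigma_2}$ corresponds to a genuinely middle-level element, not to the restriction of $a_\sigma$; indeed $a_\sigma$ has $RO(C_4)$-degree $-\sigma$, which on the middle level corresponds to $RO(C_2)$-degree $-1$, while $a_{\sigma_2}$ sits in degree $-\sigma_2$. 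So the comparison $a_\sigma\tilde e^{j,a}=0$ vs.\ $a_\sigma\cdot\bar u_\sigma e^{'j\rho+\lambda+1}\neq 0$ collapses to $0=0$ and cannot detect $\epsilon$.

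Your final paragraph is aware that something is missing and gestures toward a module-extension argument, but it does not locate the actual mechanism. The paper's proof does not go through $a_\sigma$-multiplication at all. Instead it applies $\Tr^4_2$ to the identity $\Res^4_2(e^{j,a})=\tilde e^{j,a}+\epsilon\bar u_\sigma e^{'j\rho+\lambda+1}$ (using that $\Tr^4_2\Res^4_2=0$ in $\F_2$ coefficients) to obtain $\Tr^4_2(\tilde e^{j,a})=\epsilon\,u_\sigma e^{j\rho+\lambda+1}$, and then transfers the explicit middle-level relation
\begin{equation}
\bar u_{\lambda}\tilde e^{j,a}=\bar a_{\lambda}\bar e^{j,u}+\sqrt{\bar a_{\lambda}\bar u_{\lambda}}\,\bar u_{\sigma}\bar e^{j\rho+\lambda}
\end{equation}
(obtained from the $c,b$ correspondence in subsection \ref{MidLevelComp}) to get $u_\lambda\Tr^4_2(\tilde e^{j,a})=a_\sigma u_\lambda e^{j\rho+\lambda}\neq 0$, whence $\Tr^4_2(\tilde e^{j,a})\neq 0$ and $\epsilon=1$. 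The indispensable ingredient is the term $\sqrt{\bar a_\lambda\bar u_\lambda}$, whose transfer $a_\sigma u_\lambda/u_\sigma$ produces a nonzero $a_\sigma$-multiple on the top level; your sketch never isolates this, so the nonvanishing is not established.
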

\begin{proof}The two choices of $e^{j,a}$ differ by $u_\sigma e^{j\rho+\lambda+1}=\Tr_2^4(\bar u_\sigma e^{'j\rho+\lambda+1})$ hence have the same restriction. From the $E_\infty$ page:
	\begin{gather}
	\Res^4_2(e^{j,a})=\tilde e^{j,a}+\epsilon\bar u_{\sigma}e^{'j\rho+\lambda+1}
	\end{gather}
	where $\epsilon=0,1$. Transferring this gives
	\begin{gather}
	\Tr^4_2(\tilde e^{j,a})=\epsilon u_{\sigma}e^{j\rho+\lambda+1}
	\end{gather}
	Now transferring the middle level relation $$\bar u_{\lambda}\tilde e^{j,a}=\bar a_{\lambda}\bar e^{j,u}+\sqrt{\bar a_{\lambda}\bar u_{\lambda}}\bar u_{\sigma}\bar e^{j\rho+\lambda}$$ shows that $$u_{\lambda}\Tr_2^4(\tilde e^{j,a})=a_{\sigma}u_{\lambda}e^{j\rho+\lambda}%=u_{\sigma}u_{\lambda}e^{j\rho+\lambda+1}
	$$ 
	and thus $\Tr_2^4(\tilde e^{j,a})\neq 0$ which proves $\epsilon=1$.\end{proof}

\begin{lem}\label{Liftoflambdasqrt}The elements 
	\begin{equation}
\frac{e^{j\rho+\lambda}_{\surd}}{u_\sigma^i}:=\frac{u_{\lambda}}{u_\sigma^{i+1}}e^{j,a}+a_{\lambda}\frac{e^{j,u}}{u_\sigma^{i+1}}
	\end{equation}
are coherent lifts of $\bar u_\sigma^{-i}\sqrt{\bar a_\lambda\bar u_\lambda}e^{j\rho+\lambda}\in E_\infty$.
\end{lem}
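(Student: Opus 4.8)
The goal is to verify that the proposed elements
\[
\frac{e^{j\rho+\lambda}_{\surd}}{u_\sigma^i}=\frac{u_{\lambda}}{u_\sigma^{i+1}}e^{j,a}+a_{\lambda}\frac{e^{j,u}}{u_\sigma^{i+1}}
\]
actually lift $\bar u_\sigma^{-i}\sqrt{\bar a_\lambda\bar u_\lambda}\,e^{j\rho+\lambda}\in E_\infty$ and that these lifts are coherent in $u_\sigma$. First I would reduce to the case $i=0$ by observing that both sides of the claimed coherence $u_\sigma\cdot (e^{j\rho+\lambda}_{\surd}/u_\sigma^i)=e^{j\rho+\lambda}_{\surd}/u_\sigma^{i-1}$ follow formally from the coherence of $e^{j,u}/u_\sigma^i$ established in Proposition \ref{HalfCoh} together with the cancellation property $u_\sigma\cdot (u_\lambda/u_\sigma^{i+1})=u_\lambda/u_\sigma^i$ in $k^{\bigstar}_{C_4}$; so coherence is essentially automatic once the identification of the lift is known for all $i$ simultaneously, and it suffices to check the $E_\infty$-class of the right-hand side in each degree.

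**Main computation.** The real content is computing the image of $(u_\lambda/u_\sigma^{i+1})e^{j,a}+a_\lambda(e^{j,u}/u_\sigma^{i+1})$ in the associated graded. The plan is to pass through the restriction map $\Res^4_2$, since the middle-level structure is completely understood from subsection \ref{MidLevelComp}. Using Lemma \ref{ResOfea}, $\Res^4_2(e^{j,a})=\tilde e^{j,a}+\bar u_\sigma e^{'j\rho+\lambda+1}$, and $\Res^4_2(e^{j,u}/u_\sigma^i)=\bar u_\sigma^{-i}\bar e^{j,u}$, so I would compute
\[
\Res^4_2\!\left(\frac{e^{j\rho+\lambda}_{\surd}}{u_\sigma^i}\right)=\bar u_\lambda\bar u_\sigma^{-i-1}\bigl(\tilde e^{j,a}+\bar u_\sigma e^{'j\rho+\lambda+1}\bigr)+\bar a_\lambda\bar u_\sigma^{-i-1}\bar e^{j,u}.
\]
Now apply the middle-level relation $\bar u_\lambda\tilde e^{j,a}=\bar a_\lambda\bar e^{j,u}+\sqrt{\bar a_\lambda\bar u_\lambda}\,\bar u_\sigma\bar e^{j\rho+\lambda}$ from the corollary following the generator-correspondence proposition: the $\bar a_\lambda\bar e^{j,u}$ terms cancel, leaving $\bar u_\sigma^{-i}\sqrt{\bar a_\lambda\bar u_\lambda}\,\bar e^{j\rho+\lambda}+\bar u_\lambda\bar u_\sigma^{-i}e^{'j\rho+\lambda+1}$. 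Since $\bar e^{j\rho+\lambda}$ has strictly lower filtration ($4j{+}2$) than $e^{'j\rho+\lambda+1}$ ($4j{+}3$) in the middle-level spectral sequence, the class in the associated graded at the relevant filtration is exactly $\bar u_\sigma^{-i}\sqrt{\bar a_\lambda\bar u_\lambda}\,\bar e^{j\rho+\lambda}$ — i.e.\ the restriction of the desired $E_\infty$-class. Because $\sqrt{\bar a_\lambda\bar u_\lambda}\,\bar u_\sigma^{-i}e^{j\rho+\lambda}$ restricts injectively enough (it is not in $\ker\Res^2_1$, or one tracks it explicitly in terms of $c,b$), and the only other class in its top-level degree and filtration would be a transfer $\Tr_2^4$, which restricts to something in the image of $\Tr_1^2$ — distinguishable from our class — this pins down the top-level $E_\infty$-class of $e^{j\rho+\lambda}_{\surd}/u_\sigma^i$ to be $\bar u_\sigma^{-i}\sqrt{\bar a_\lambda\bar u_\lambda}\,e^{j\rho+\lambda}$, as claimed.

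**The obstacle.** The delicate point is the last step: ruling out that $e^{j\rho+\lambda}_{\surd}/u_\sigma^i$ differs from a genuine lift of $\bar u_\sigma^{-i}\sqrt{\bar a_\lambda\bar u_\lambda}\,e^{j\rho+\lambda}$ by a lower-filtration (higher $s$) term, i.e.\ a transfer from the middle level. One must check the top-level degree $j\rho+\lambda+2\sigma-(i)\cdot$(\text{shift of }$u_\sigma$) contains no unexpected class of higher filtration that could contaminate the identification; here I would invoke Corollary \ref{EinftyGen} to enumerate the finitely many $E_\infty$-generators in that degree and note they all have filtration $\le 4j+2$ or are transfers detected by $\Res^4_2$ landing in $\operatorname{im}\Tr_1^2$, which our computed restriction is not. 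The coherence in $u_\sigma$ then follows from the uniqueness-type argument already packaged in Proposition \ref{HalfCoh} applied to $e^{j,u}/u_\sigma^{i}$, since multiplying the defining formula by $u_\sigma$ and using coherence of the building blocks yields the formula with $i$ replaced by $i-1$.
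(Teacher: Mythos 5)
Your core computation agrees with the paper's: you restrict to the middle level, use Lemma~\ref{ResOfea} to get $\Res^4_2(e^{j,a})=\tilde e^{j,a}+\bar u_\sigma e^{'j\rho+\lambda+1}$, and apply the relation $\bar u_\lambda\tilde e^{j,a}=\bar a_\lambda\bar e^{j,u}+\sqrt{\bar a_\lambda\bar u_\lambda}\bar u_\sigma\bar e^{j\rho+\lambda}$ to identify the restriction, and your treatment of coherence also matches the paper's one-line argument. However, there is a genuine gap in the way you pass from this middle-level information back to the top level, and your ``obstacle'' paragraph attacks a non-issue while missing the real one.

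The real issue is establishing that the element $\alpha := (u_\lambda/u_\sigma^{i+1})e^{j,a}+a_\lambda(e^{j,u}/u_\sigma^{i+1})$ lies in filtration $4j+2$ on the \emph{top} level, i.e.\ that its image in $E_\infty^{4j+1,\bigstar}(C_4/C_4)$ is zero. The paper handles this directly and cheaply: $e^{j,a}$ and $e^{j,u}/u_\sigma^{i+1}$ both sit in filtration $4j+1$, projecting to $a_\lambda e^{j\rho+\sigma}$ and $(u_\lambda/u_\sigma^{i+1})e^{j\rho+\sigma}$ respectively, so the image of $\alpha$ in $E_\infty^{4j+1,\bigstar}$ is the sum $(u_\lambda/u_\sigma^{i+1})a_\lambda e^{j\rho+\sigma}+a_\lambda(u_\lambda/u_\sigma^{i+1})e^{j\rho+\sigma}=0$ in characteristic $2$. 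Your argument only shows $\Res^4_2(\alpha)$ lies in filtration $4j+2$ on the \emph{middle} level; this does not by itself give the top-level filtration, since an element of $E_\infty^{4j+1,\bigstar}(C_4/C_4)$ could a priori restrict to zero. (It happens that one can check the restriction on this $E_\infty^{4j+1}$ group is injective, but you do not check it, and the paper's direct cancellation is both shorter and degree-independent.) Once filtration $4j+2$ is secured, the argument is clean: $E_\infty^{4j+2,\bigstar}$ is generated by the single element $\bar u_\sigma^{-i}\sqrt{\bar a_\lambda\bar u_\lambda}e^{j\rho+\lambda}$, so it suffices to show $\alpha$ is nonzero in $E_\infty^{4j+2,\bigstar}$, and your restriction computation does exactly that.

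Your ``obstacle'' paragraph is aimed in the wrong direction. You worry that $\alpha$ ``differs from a genuine lift \dots\ by a lower-filtration (higher $s$) term, i.e.\ a transfer.'' But two elements of $F^{4j+2}$ with the same image in $E_\infty^{4j+2,\bigstar}$ are \emph{both} lifts; differing by something in $F^{4j+3}$ (in particular, a transfer) is the generic ambiguity and never threatens the conclusion. The concern that actually needs addressing is the opposite one — that $\alpha$ might be too shallow, i.e.\ nonzero already in $E_\infty^{4j+1,\bigstar}$ — and this is precisely the step supplied by the paper's explicit $\F_2$-cancellation and missing from your write-up. Replacing the ``obstacle'' paragraph with that one-line cancellation would close the gap.

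Finally, note a small indexing mismatch: the paper's proof works with $(u_\lambda/u_\sigma^i)e^{j,a}+a_\lambda(e^{j,u}/u_\sigma^i)$ lifting $\bar u_\sigma^{-i+1}\sqrt{\bar a_\lambda\bar u_\lambda}e^{j\rho+\lambda}$, i.e.\ an $i\mapsto i+1$ shift relative to the Lemma statement. Your computation uses the Lemma's indexing; just be aware of the shift when comparing.
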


\begin{proof}Fix $i,j\ge 0$ and fix $\bigstar$ to be the degree of the element 	
	\begin{equation}\frac{u_{\lambda}}{u_\sigma^i}e^{j,a}+a_{\lambda}\frac{e^{j,u}}{u_\sigma^i}
	\end{equation}
	This element is by definition in filtration $4j+1$, however its projection to $E_\infty^{4j+1,\bigstar}$ is:
	\begin{equation}\frac{u_{\lambda}}{u_\sigma^i}a_\lambda e^{j\rho+\sigma}+a_{\lambda}\frac{u_\lambda }{u_\sigma^i}e^{j\rho+\sigma}=0
	\end{equation}
	so it is actually in filtration $4j+2$. But observe that $E_\infty^{4j+2,\bigstar}$ is generated by $\bar u_\sigma^{-i+1}\sqrt{\bar a_\lambda\bar u_\lambda}e^{j\rho+\lambda}$ so it suffices to check that
	\begin{equation}\frac{u_{\lambda}}{u_\sigma^i}e^{j,a}+a_{\lambda}\frac{e^{j,u}}{u_\sigma^i}
	\end{equation}
	is not $0$ when projected to $E_\infty^{4j+2,\bigstar}$. Multiplying by $u_\sigma^i$ reduces us to the case $i=0$ and then:
		\begin{equation}\Res^4_2(u_{\lambda}e^{j,a}+a_{\lambda}e^{j,u})=\bar u_\lambda\tilde e^{j,a}+\bar u_{\sigma}\bar u_\lambda e^{'j\rho+\lambda+1}+\bar a_\lambda \bar e^{j,u}=\bar u_\sigma\sqrt{\bar a_\lambda\bar u_\lambda}\bar e^{j\rho+\lambda}+\bar u_{\sigma}\bar u_\lambda e^{'j\rho+\lambda+1}
	\end{equation}
	using Lemma \ref{ResOfea} and the middle level computation of subsection \ref{MidLevelComp}. Projecting this restriction to $E_\infty^{4j+2,\bigstar}$ returns
			\begin{equation}\bar u_\sigma\sqrt{\bar a_\lambda\bar u_\lambda}\bar e^{j\rho+\lambda}\neq 0
	\end{equation}
	as desired.
	
	Coherence of $e^{j\rho+\lambda}_{\surd}/u_\sigma^i$ follows from the coherence of $u_\lambda/u_\sigma^i$ and $e^{j,u}/u_\sigma^i$.

\end{proof}

\begin{lem}\label{Liftoflambdaplus1sqrt}The elements 
	\begin{equation}
		\frac{e^{j\rho+\lambda+1}_\surd}{u_\sigma^i}:=\Tr_2^4(\bar u_\sigma^{-i}\sqrt{\bar a_\lambda\bar u_\lambda}e^{'j\rho+\lambda+1})
	\end{equation}
	are coherent lifts of $\bar u_\sigma^{-i}\sqrt{\bar a_\lambda\bar u_\lambda}e^{j\rho+\lambda+1}\in E_\infty$. Furthermore,
	\begin{equation}
		a_\sigma \frac{e^{j\rho+\lambda}_\surd}{u_\sigma^i}= \frac{e^{j\rho+\lambda+1}}{u_\sigma^{i-1}}
	\end{equation}
\end{lem}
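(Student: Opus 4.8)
The plan is to mimic the arguments of Lemmas \ref{Liftoflambdaplus1} and \ref{Liftoflambdasqrt} in tandem. First I would check directly that $\Tr_2^4(\bar u_\sigma^{-i}\sqrt{\bar a_\lambda\bar u_\lambda}e^{'j\rho+\lambda+1})$ lifts the $E_\infty$-class $\bar u_\sigma^{-i}\sqrt{\bar a_\lambda\bar u_\lambda}e^{j\rho+\lambda+1}$: on $E_\infty$ the transfer $\Tr_2^4$ sends $\bar u_\sigma^{-i}\sqrt{\bar a_\lambda\bar u_\lambda}e^{'j\rho+\lambda+1}$ to $\bar u_\sigma^{-i}\sqrt{\bar a_\lambda\bar u_\lambda}e^{j\rho+\lambda+1}$ (this is how the corresponding generator in the $E_1$-page for row $4j+3$ is set up, and it survives to $E_\infty$ by the middle- and top-level computations), so passing to the associated graded identifies the projection of $\Tr_2^4(\bar u_\sigma^{-i}\sqrt{\bar a_\lambda\bar u_\lambda}e^{'j\rho+\lambda+1})$ with the desired class. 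Coherence in $i$ is then immediate from the projection formula (Frobenius reciprocity): $u_\sigma\cdot \Tr_2^4(\bar u_\sigma^{-i}\sqrt{\bar a_\lambda\bar u_\lambda}e^{'j\rho+\lambda+1})=\Tr_2^4(\bar u_\sigma\cdot\bar u_\sigma^{-i}\sqrt{\bar a_\lambda\bar u_\lambda}e^{'j\rho+\lambda+1})=\Tr_2^4(\bar u_\sigma^{-i+1}\sqrt{\bar a_\lambda\bar u_\lambda}e^{'j\rho+\lambda+1})$, since $\Res^4_2(u_\sigma)=\bar u_\sigma$.

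For the relation $a_\sigma\,e^{j\rho+\lambda}_\surd/u_\sigma^i = e^{j\rho+\lambda+1}/u_\sigma^{i-1}$, I would argue exactly as in the last paragraph of Lemma \ref{Liftoflambdaplus1}. On $E_\infty$ the class $\bar u_\sigma^{-i}\sqrt{\bar a_\lambda\bar u_\lambda}e^{j\rho+\lambda}$ is not a transfer (one reads this off Corollary \ref{EinftyGen}: the transfers in that bidegree are exhausted by the $e^{j\rho+\lambda+1}$-line, and the $\sqrt{\bar a_\lambda\bar u_\lambda}e^{j\rho+\lambda}$-class is detected by a non-transfer element of the middle level), so using $\ker(a_\sigma)=\operatorname{Im}(\Tr_2^4)$ in $k^{\bigstar}_{C_4}(B_{C_4}\Sigma_{2+})$ we get a module extension
\begin{equation}
a_\sigma\,\frac{e^{j\rho+\lambda}_\surd}{u_\sigma^i}=\frac{e^{j\rho+\lambda+1}_?}{u_\sigma^{i-1}}
\end{equation}
for some transfer on the right. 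To pin it down I would multiply through by $u_\sigma^{i-1}$ to reduce to $i=1$ (using coherence of both sides, established above and in Lemma \ref{Liftoflambdasqrt}), compute $\Res^4_2$ of $e^{j\rho+\lambda}_\surd=u_\lambda u_\sigma^{-1}e^{j,a}+a_\lambda u_\sigma^{-1}e^{j,u}$ using Lemma \ref{ResOfea} and the middle-level relations of subsection \ref{MidLevelComp} — this is the computation already carried out inside the proof of Lemma \ref{Liftoflambdasqrt}, giving $\bar u_\sigma^{-1}\sqrt{\bar a_\lambda\bar u_\lambda}\bar e^{j\rho+\lambda}+\bar u_\sigma^{-1}\bar u_\lambda e^{'j\rho+\lambda+1}$ in the relevant degree — apply $a_\sigma$, and transfer back, to see that the extension term is exactly $\Tr_2^4(\sqrt{\bar a_\lambda\bar u_\lambda}e^{'j\rho+\lambda+1})=e^{j\rho+\lambda+1}_\surd$ at $i=1$, hence $e^{j\rho+\lambda+1}_\surd/u_\sigma^{i-1}$ in general. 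One should double-check the possibility that the extension term vanishes; this is ruled out since $a_\sigma\cdot(\text{the }\sqrt{\,}\text{-lift})$ detects the nonzero $E_\infty$-class $\bar u_\sigma^{-i}\sqrt{\bar a_\lambda\bar u_\lambda}e^{j\rho+\lambda+1}$ rather than a higher-filtration element, exactly as in the non-vanishing argument of Lemma \ref{Liftoflambdaplus1}.

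The main obstacle I anticipate is the identification step in the extension: confirming that $a_\sigma\,e^{j\rho+\lambda}_\surd/u_\sigma^i$ lands in the precise transfer $e^{j\rho+\lambda+1}_\surd/u_\sigma^{i-1}$ rather than some other transfer in the same $RO(C_4)$-degree and filtration — one must check there is no competing transfer class (e.g. a multiple of $e^{j\rho+\lambda+1}/u_\sigma^{i'}$ with no $\sqrt{\bar a_\lambda\bar u_\lambda}$) that could contribute, which comes down to a careful degree bookkeeping in $E_\infty$ together with the fact that $\Res^4_2$ of the candidate is already computed and matches. Everything else — the lift claim and coherence — is a direct application of Frobenius reciprocity and the structure of the $E_\infty$ page from Corollary \ref{EinftyGen}.
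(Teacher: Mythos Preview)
Your treatment of the first claim (that the transfers are coherent lifts of the indicated $E_\infty$ classes) is correct and is exactly what the paper does: one reads the lift off the $E_\infty$ page and gets coherence from Frobenius reciprocity.

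For the displayed identity, however, your indirect strategy has a real gap, and the paper proceeds quite differently. Your plan is to argue that $a_\sigma\cdot(e^{j\rho+\lambda}_\surd/u_\sigma^i)$ is a nonzero transfer and then identify it; but the step ``compute $\Res^4_2$, apply $a_\sigma$, and transfer back'' is not a well-defined operation: $a_\sigma$ restricts to zero on the middle level, so multiplying a restriction by $a_\sigma$ yields nothing, and $\Tr_2^4\circ\Res^4_2=0$ in characteristic~$2$. You correctly flag the identification of the transfer as the obstacle, but your outline does not actually resolve it.

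The paper bypasses this entirely by a direct computation. Since by definition
\[
\frac{e^{j\rho+\lambda}_\surd}{u_\sigma^i}=\frac{u_\lambda}{u_\sigma^{i+1}}e^{j,a}+a_\lambda\frac{e^{j,u}}{u_\sigma^{i+1}},
\]
one multiplies through by $a_\sigma$ and uses the two transfer identities already established in the paper, namely $a_\sigma u_\lambda/u_\sigma^{i+1}=\Tr_2^4(\bar u_\sigma^{-i}\sqrt{\bar a_\lambda\bar u_\lambda})$ and $a_\sigma(e^{j,u}/u_\sigma^{i+1})=\Tr_2^4(\bar u_\sigma^{-i}e^{j,au})$. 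Frobenius then pulls both terms into a single $\Tr_2^4$, inside which one substitutes $\Res^4_2(e^{j,a})=\tilde e^{j,a}+\bar u_\sigma e^{'j\rho+\lambda+1}$ (Lemma~\ref{ResOfea}) and the middle-level relation $\bar a_\lambda e^{j,au}=\sqrt{\bar a_\lambda\bar u_\lambda}\tilde e^{j,a}+\bar a_\lambda\bar u_\sigma\bar e^{j\rho+\lambda}$. The $\tilde e^{j,a}$ terms cancel, and the remaining $\bar a_\lambda\bar u_\sigma^{-i+1}\bar e^{j\rho+\lambda}$ is a restriction so transfers to zero, leaving exactly $\Tr_2^4(\bar u_\sigma^{-i+1}\sqrt{\bar a_\lambda\bar u_\lambda}\,e^{'j\rho+\lambda+1})=e^{j\rho+\lambda+1}_\surd/u_\sigma^{i-1}$. (Note the $\surd$ on the right-hand side; the lemma statement omits it, but the proof makes clear this is the intended target.) This direct route requires no case analysis of competing transfers and no filtration bookkeeping beyond what is already available.
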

 
 \begin{proof}The fact that these transfers are lifts follows from the $E_\infty$ page; coherence follows from the Frobenius relations. We check the equality directly:
     \begin{align}
 a_\sigma\frac{e^{j\rho+\lambda}_{\surd}}{u_\sigma^i}&=\frac{a_\sigma u_{\lambda}}{u_\sigma}e^{j,a}+a_{\lambda}\frac{a_\sigma e^{j,u}}{u_\sigma}=\Tr_2^4(\bar u_\sigma^{-i}\sqrt{\bar a_\lambda\bar u_\lambda})e^{j,a}+a_{\lambda}\Tr_2^4(e^{j,au}\bar u_{\sigma}^{-i})=\\
 &=\Tr_2^4(\bar u_\sigma^{-i}\sqrt{\bar a_\lambda\bar u_\lambda}\tilde e^{j,a}+\sqrt{\bar a_\lambda\bar u_\lambda}\bar u_\sigma^{-i+1} e^{'j\rho+\lambda+1}+\bar a_\lambda e^{j,au}\bar u_\sigma^{-i})=\\
 &=\Tr_2^4(\bar a_\lambda \bar u_\sigma^{-i+1} \bar e^{j\rho+\lambda}+\sqrt{\bar a_\lambda\bar u_\lambda}\bar u_\sigma^{-i+1} e^{'j\rho+\lambda+1})=\\
 &=\Tr_2^4(\sqrt{\bar a_\lambda\bar u_\lambda}\bar u_\sigma^{-i+1} e^{'j\rho+\lambda+1})=\\
 &=\frac{e^{j\rho+\lambda+1}_\surd}{u_\sigma^{i-1}}
 \end{align}
 We used the middle level relation $	\bar a_{\lambda}e^{j,au}=\sqrt{\bar a_{\lambda}\bar u_{\lambda}}\tilde e^{j,a}+\bar a_{\lambda}\bar u_{\sigma}\bar e^{j\rho+\lambda}$ and the fact that $\bar u_\sigma^{-i} \bar e^{j\rho+\lambda}$ is the restriction of $e^{j\rho+\lambda}/u_\sigma^{i}$ which follows from the same fact on $E_\infty$.
  \end{proof}

 Lemmas \ref{Liftoflambdaplus1}, \ref{Liftoflambdasqrt}, \ref{Liftoflambdaplus1sqrt} combined with Corollary \ref{EinftyGen} prove Proposition \ref{Generation}.

\subsection{Mackey functor structure} 

 \begin{prop}The Mackey functor structure of $k^{\bigstar}(B_{C_4}\Sigma_{2+})$ is determined by: 
 	\begin{gather}
 	\Res^4_2(e^{j\rho})=\bar e^{j\rho}\text{ , } 	\Res^4_2\Big(\frac{e^{j,u}}{u_\sigma^i}\Big)=\bar e^{j,u}\bar u_{\sigma}^{-i}\text{ , }	\Res^4_2\Big(\frac{e^{j\rho+\lambda}}{u_\sigma^i}\Big)=\bar e^{j\rho+\lambda}\bar u_{\sigma}^{-i}\\
 	 \Tr^4_2(e^{j,au}\bar u_{\sigma}^{-i})=a_{\sigma}\frac{e^{j,u}}{u_{\sigma}^{i+1}}\text{ , } 	\Tr^4_2\Big(e^{'j\rho+\lambda+1}\bar u_{\sigma}^{-i}\Big)=a_\sigma\frac{e^{j\rho+\lambda}}{u_\sigma^{i+1}}\\
 	 \Res^4_2(e^{j,a})=\tilde e^{j,a}+\bar u_{\sigma}e^{'j\rho+\lambda+1}
 	\end{gather}
where $i,j\ge 0$.
 \end{prop}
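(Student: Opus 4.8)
\textit{Proof proposal.} The plan is to recognise this proposition as an assembly of results already established, with a single genuinely new computation. I would first treat the restriction formulas. Since $\Res^4_2$ is a map of the filtered modules defining the spectral sequences, for any top-level lift $x$ of a class $\bar x\in E_\infty$ the element $\Res^4_2(x)$ is a middle-level lift of $\Res^4_2(\bar x)\in E_\infty$. By the middle-level computation of subsection \ref{MidLevelComp}, the $E_\infty$-classes $\bar e^{j\rho}$, $\bar u_\lambda\bar e^{j\rho+\sigma}$ and $\bar e^{j\rho+\lambda}(x+gx)$ --- which are the restrictions of the $E_\infty$-classes $e^{j\rho}$, $e^{j,u}$ and $e^{j\rho+\lambda}$ --- lift \emph{uniquely} to $k^{\bigstar}_{C_2}(B_{C_4}\Sigma_{2+})$, namely to $\bar e^{j\rho}$, $\bar e^{j,u}$ and $\bar e^{j\rho+\lambda}$ respectively; hence $\Res^4_2(e^{j\rho})=\bar e^{j\rho}$, $\Res^4_2(e^{j,u})=\bar e^{j,u}$ and $\Res^4_2(e^{j\rho+\lambda})=\bar e^{j\rho+\lambda}$. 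For the twisted versions I would apply $\Res^4_2$ to the defining relations $u_\sigma^i\cdot(e^{j,u}/u_\sigma^i)=e^{j,u}$ and $u_\sigma^i\cdot(e^{j\rho+\lambda}/u_\sigma^i)=e^{j\rho+\lambda}$ and divide by the unit $\bar u_\sigma^i\in k^{\bigstar}_{C_2}$. The formula $\Res^4_2(e^{j,a})=\tilde e^{j,a}+\bar u_\sigma e^{'j\rho+\lambda+1}$ is exactly Lemma \ref{ResOfea}.

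Next the transfers. The identity $\Tr^4_2(e^{'j\rho+\lambda+1}\bar u_\sigma^{-i})=a_\sigma(e^{j\rho+\lambda}/u_\sigma^{i+1})$ is the ``Furthermore'' clause of Lemma \ref{Liftoflambdaplus1} after reindexing $i\mapsto i+1$ and unwinding $e^{j\rho+\lambda+1}/u_\sigma^{i}:=\Tr^4_2(\bar u_\sigma^{-i}e^{'j\rho+\lambda+1})$. The remaining identity $\Tr^4_2(e^{j,au}\bar u_\sigma^{-i})=a_\sigma(e^{j,u}/u_\sigma^{i+1})$ is the one point I would have to prove from scratch, modelled on Lemma \ref{Liftoflambdaplus1}. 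First, $e^{j,u}/u_\sigma^{i+1}$ is not in $Im(\Tr^4_2)$: on $E_\infty$ it projects to $(u_\lambda/u_\sigma^{i+1})e^{j\rho+\sigma}$, which is not an $E_\infty$-transfer. Since $Ker(a_\sigma)=Im(\Tr^4_2)$ in $k^{\bigstar}_{C_4}(B_{C_4}\Sigma_{2+})$, it follows that $a_\sigma(e^{j,u}/u_\sigma^{i+1})\neq 0$; and $a_\sigma^2(e^{j,u}/u_\sigma^{i+1})=0$ by Proposition \ref{HalfCoh}, so $a_\sigma(e^{j,u}/u_\sigma^{i+1})$ is a nonzero transfer. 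To identify it I would transfer the middle-level relation $\bar u_\lambda e^{j,au}=\sqrt{\bar a_\lambda\bar u_\lambda}\,\bar e^{j,u}$: using $\bar u_\sigma^{-i}\bar e^{j,u}=\Res^4_2(e^{j,u}/u_\sigma^{i})$ (just proved), the Frobenius relations, the value $\Tr^4_2(\sqrt{\bar a_\lambda\bar u_\lambda})=a_\sigma u_\lambda/u_\sigma$ from subsection \ref{ROC4HomolPt}, and coherence of fractions, one obtains
\begin{equation}
u_\lambda\,\Tr^4_2(e^{j,au}\bar u_\sigma^{-i})=u_\lambda\, a_\sigma\frac{e^{j,u}}{u_\sigma^{i+1}} .
\end{equation}
A degree and filtration inspection of $k^{\bigstar}_{C_4}(B_{C_4}\Sigma_{2+})$ in that degree --- showing there is no $u_\lambda$-torsion there, equivalently that $\Tr^4_2(e^{j,au}\bar u_\sigma^{-i})$ is the unique nonzero transfer in that degree, exactly as at the end of the proof of Lemma \ref{Liftoflambdaplus1} --- then lets me cancel the $u_\lambda$ and conclude.

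Finally I would note why these formulas determine the whole Mackey functor structure. By Proposition \ref{Generation} the generators $e^{j\rho},e^{j,a},e^{j,u}/u_\sigma^i,e^{j\rho+\lambda}/u_\sigma^i$ generate $k^{\bigstar}_{C_4}(B_{C_4}\Sigma_{2+})$ over $k^{\bigstar}_{C_4}$, so $\Res^4_2$ is determined everywhere; the middle level is generated over $k^{\bigstar}_{C_2}$ by $\bar e^{j\rho},\tilde e^{j,a},e^{j,au},\bar e^{j,u},\bar e^{j\rho+\lambda},e^{'j\rho+\lambda+1}$, each of which is either a restriction or has a transfer given above, so $\Tr^4_2$ is determined via the Frobenius relations together with the transfers already known in $k^{\bigstar}(S^0)$ from subsection \ref{ROC4HomolPt}; and the Weyl actions are trivial by the level computations. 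The one real obstacle is the displayed computation of $\Tr^4_2(e^{j,au}\bar u_\sigma^{-i})$, i.e. justifying the final cancellation of $u_\lambda$ (or, equivalently, the uniqueness of the nonzero transfer in that degree); everything else is a bookkeeping appeal to the preceding lemmas.
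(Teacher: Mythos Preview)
Your proposal is correct and follows essentially the same approach as the paper: both identify the restriction formulas as forced by the absence of extension ambiguities (unique lifts in the middle level), cite Lemmas \ref{ResOfea} and \ref{Liftoflambdaplus1} for the already-established pieces, and isolate $\Tr^4_2(e^{j,au}\bar u_\sigma^{-i})$ as the one genuinely new computation.

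The only difference is in that last step. The paper argues more directly: since $a_\sigma^2(e^{j,u}/u_\sigma^{i+1})=0$ the element $a_\sigma(e^{j,u}/u_\sigma^{i+1})$ is a transfer, it is nonzero (visible on $E_\infty$), and an inspection of the middle level in that degree shows $e^{j,au}\bar u_\sigma^{-i}$ is the \emph{only} class that can transfer nontrivially there --- so the identification is immediate, with no need to transfer the relation $\bar u_\lambda e^{j,au}=\sqrt{\bar a_\lambda\bar u_\lambda}\,\bar e^{j,u}$ or to cancel $u_\lambda$. You yourself note this equivalence (``uniqueness of the nonzero transfer in that degree''), so your detour through the middle-level relation and the $u_\lambda$-cancellation is unnecessary extra work; the paper's route avoids the torsion check entirely.
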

 
 \begin{proof}We can see directly that there are no Mackey functor extensions for $e^{j\rho}, e^{j,u}/u_{\sigma}^i$ and $e^{j\rho+\lambda}/u_{\sigma}^i$. The rest were established in the previous two subsections, apart from:
 	 	\begin{gather}
 		\Tr^4_2(e^{j,au}\bar u_{\sigma}^{-i})=a_{\sigma}\frac{e^{j,u}}{u_{\sigma}^{i+1}}
 	\end{gather}
 To see this, recall that $a_{\sigma}^2(e^{j,u}/u_{\sigma}^i)=0$ hence $a_{\sigma}(e^{j,u}/u_{\sigma}^i)$ is a transfer. Moreover, $a_{\sigma}(e^{j,u}/u_{\sigma}^i)\neq 0$ which is seen on the $E_\infty$ page, and the only way that $a_{\sigma}(e^{j,u}/u_{\sigma}^i)$ can be a nonzero transfer is for $a_{\sigma}(e^{j,u}/u_{\sigma}^i)=\Tr_2^4(e^{j,au}\bar u_{\sigma}^{-i})$.
 \end{proof}

\subsection{Top level module relations}
With the exception of relations expressing coherence ($u_\sigma (e^{j,u}/u_\sigma^i)=e^{j,u}/u_\sigma^{i-1}$ and $u_\sigma (e^{j\rho+\lambda}/u_\sigma^i)=e^{j\rho+\lambda}/u_\sigma^{i-1}$), the rest of the module relations are:
  \begin{prop}\label{Generation+Rel}The $k^{\bigstar}_{C_4}$ module $k^{\bigstar}_{C_4}(B_{C_4}\Sigma_{2+})$ is generated by:
	\begin{equation}
	e^{j\rho}, e^{j,a},\frac{e^{j,u}}{u_{\sigma}^i},\frac{e^{j\rho+\lambda}}{u_{\sigma}^i}\end{equation}
	under the relations:
	\begin{gather}
	\frac{a_{\sigma}^2}{a_{\lambda}^m}\frac{e^{j,u}}{u_{\sigma}^i}=0\\
	\frac{\frac{\theta}{a_{\lambda}}a_{\sigma}}{u_{\sigma}^{i-2}a_{\lambda}^{m-1}}e^{j,a}+\frac{s}{u_{\sigma}^{i-1}a_{\lambda}^{m-2}}e^{j,u}=\frac{a_{\sigma}^2}{a_{\lambda}^m}\frac{e^{j\rho+\lambda}}{u_{\sigma}^i}
	\end{gather}
	for $i,j,m\ge 0$. %$k$ is reserved for $\F_2$
\end{prop}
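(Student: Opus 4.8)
Write $M=k^{\bigstar}_{C_4}(B_{C_4}\Sigma_{2+})$. Since generation is Proposition \ref{Generation}, the work is to (a) verify that the two displayed families of relations hold in $M$, and (b) show that they, together with the coherence relations, generate all relations among the four generating families.

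For (a), the plan is to reduce to Proposition \ref{HalfCoh} together with restriction computations. The first relation with $m=0$ is precisely $a_\sigma^2(e^{j,u}/u_\sigma^i)=0$ from Proposition \ref{HalfCoh}. For $m\ge 1$, coherence of fractions gives $a_\lambda^m\cdot\frac{a_\sigma^2}{a_\lambda^m}=a_\sigma^2$, so $\frac{a_\sigma^2}{a_\lambda^m}\frac{e^{j,u}}{u_\sigma^i}$ is killed by $a_\lambda^m$; it restricts to $0$ on the middle level (using $\Res^4_2\frac{a_\sigma^2}{a_\lambda}=v\bar u_\sigma^2$, coherence, and the fact that $v$ annihilates $\bar e^{j,u}$) and trivially on the bottom level; and its image in $E_\infty$ already vanishes in filtration $4j+1$ since $\frac{u_\lambda}{u_\sigma^i}\frac{a_\sigma^2}{a_\lambda^m}=0$ in $k^{\bigstar}_{C_4}$. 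As the spectral-sequence filtration is finite in each fixed degree, a short induction up the filtration, at each step invoking Corollary \ref{EinftyGen} and a degree count to rule out $a_\lambda^m$-torsion in the relevant higher-filtration groups, forces the element to be $0$. For the second relation I would pin down the two sides by their restrictions: the Mackey-functor formulas of the preceding subsection ($\Res^4_2 e^{j,a}=\tilde e^{j,a}+\bar u_\sigma e^{'j\rho+\lambda+1}$, $\Res^4_2(e^{j,u}/u_\sigma^i)=\bar u_\sigma^{-i}\bar e^{j,u}$, $\Res^4_2(e^{j\rho+\lambda}/u_\sigma^i)=\bar u_\sigma^{-i}\bar e^{j\rho+\lambda}$), together with the restrictions of the scalars from subsection \ref{ROC4HomolPt}, reduce the identity on the middle level to a computation in the ring of subsection \ref{MidLevelComp}, and likewise on the bottom level. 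The difference of the two sides then lies in $\ker(\Res^4_2)$ on the top level, and the same finite filtration induction, now also using the vanishings $\theta a_\sigma=0$, $sa_\lambda=su_\lambda=0$, $x_{n,1}u_\sigma=0$ to limit the possible $E_\infty$-contributions with trivial restriction, forces it to vanish. As a sanity check, the $m=0$ case of the second relation collapses, via $\theta a_\sigma=0$ and $sa_\lambda=0$, to $a_\sigma^2(e^{j\rho+\lambda}/u_\sigma^i)=0$, recovering Proposition \ref{HalfCoh}.

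For (b), let $P$ be the $k^{\bigstar}_{C_4}$-module presented by the four generating families modulo coherence and the two displayed families, and $\phi\colon P\twoheadrightarrow M$ the resulting surjection. Filter $P$ by placing each generator in the spectral-sequence filtration of its image, so $\phi$ is filtration-preserving and induces a surjection $\operatorname{gr}\phi\colon\operatorname{gr}P\to E_\infty$. The plan is to prove injectivity of $\operatorname{gr}\phi$ by matching against Corollary \ref{EinftyGen}: the relations in $E_\infty$ among the listed classes are (i) those already holding in $k^{\bigstar}_{C_4}$, notably $\frac{u_\lambda}{u_\sigma^i}\frac{a_\sigma^2}{a_\lambda^m}=0$, which lift to instances of the first relation and of the vanishing cases of the second; (ii) the relations $a_\sigma e^{j\rho+\lambda}=a_\sigma\sqrt{\bar a_\lambda\bar u_\lambda}e^{j\rho+\lambda}=0$, reflecting that $a_\sigma$ annihilates the filtration $4j+2$ part of $E_\infty$, which lift via $e^{j\rho+\lambda+1}/u_\sigma^i=a_\sigma(e^{j\rho+\lambda}/u_\sigma^{i+1})$ (Lemma \ref{Liftoflambdaplus1}) and coherence to nothing new; and (iii) the relations $v\bar u_\sigma e^{j\rho+\lambda}=v\bar u_\sigma^2 e^{j\rho+\lambda+1}=0$ of Corollary \ref{EinftyGen}, which lift via $\Res^4_2\frac{a_\sigma^2}{a_\lambda}=v\bar u_\sigma^2$ to instances of the second relation. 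Lifting each $E_\infty$-relation to $M$, subtracting, and descending the (finite) filtration then shows every relation in $M$ is a consequence of coherence and the two displayed families, so $\operatorname{gr}\phi$, hence $\phi$, is an isomorphism.

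The main obstacle is part (b). The two displayed relations are for the most part filtration-jumping: for almost all indices their naive leading terms vanish, precisely because $\theta a_\sigma=sa_\lambda=0$ and $\frac{u_\lambda}{u_\sigma^i}\frac{a_\sigma^2}{a_\lambda^j}=0$ hold in $k^{\bigstar}_{C_4}$, and their true content (for instance the emergence of the $\sqrt{\bar a_\lambda\bar u_\lambda}$-classes in Lemma \ref{Liftoflambdasqrt}) only surfaces after cancellations. Hence the comparison with $E_\infty$ cannot proceed by naively equating leading terms; one must carefully track how every relation of Corollary \ref{EinftyGen} and of $k^{\bigstar}_{C_4}$ lifts, and check that the many degenerate index ranges ($i<2$ or $m<2$) of the second relation remain consistent. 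The coherence of all the fractions involved (Propositions \ref{HalfCoh}, \ref{Generation} and Lemmas \ref{Liftoflambdaplus1}--\ref{Liftoflambdaplus1sqrt}) is what keeps this bookkeeping finite.
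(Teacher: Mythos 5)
Your verification in part (a) follows the same general strategy as the paper: each candidate relation vanishes on $E_\infty$ at its leading filtration, so the actual value of the left-hand side is a sum of higher-filtration contributions in the kernel of $\Res^4_2$, which are then ruled out by module multiplication. The details diverge, though, and the paper's argument is tighter in two places you leave vague. For the first relation, after writing the possible extension as $\sum_*\epsilon_*\frac{\theta}{a_\sigma^*u_\sigma^*a_\lambda^*}e^{*\rho+*}$, the paper observes that $a_\lambda$-multiplication is \emph{injective} on this span (because the $\theta$-terms are $a_\lambda^{\pm}$-periodic in $k^{\bigstar}_{C_4}$), so the statement for $m$ reduces to $m=1$, where a degree check finishes. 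Your "degree count to rule out $a_\lambda^m$-torsion" is gesturing at the same fact but doesn't isolate the operative observation. For the second relation, the paper's decisive step is the computation
$$a_\sigma^2\frac{e^{j\rho+\lambda}}{u_\sigma^i}=a_\sigma\frac{e^{j\rho+\lambda+1}}{u_\sigma^{i-1}}=a_\sigma\Tr_2^4(e^{'j\rho+\lambda+1}\bar u_\sigma^{-i+1})=0,$$
which uses Lemma \ref{Liftoflambdaplus1} and the Frobenius relation $a_\sigma\Tr_2^4(-)=0$; combined with the leading term $\frac{s}{u_\sigma^{i-2}a_\lambda^{m-1}}e^{j\rho+\lambda}_{\surd}$ being killed by $a_\lambda^m$ (since $sa_\lambda=0$), and then substituting the formula $e^{j\rho+\lambda}_{\surd}=\frac{u_\lambda}{u_\sigma}e^{j,a}+a_\lambda\frac{e^{j,u}}{u_\sigma}$ from Lemma \ref{Liftoflambdasqrt}, this produces the stated relation. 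Your proposal of pinning both sides down by their restrictions is a plausible alternative route but would require an additional argument to dispose of $\ker(\Res^4_2)$, and you don't supply that; the $a_\sigma\Tr_2^4$ trick is exactly the mechanism that does this work in the paper.

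On part (b): the paper's proof establishes only that the stated relations hold — it does not explicitly prove that, together with coherence, they generate \emph{all} relations. You are right to flag this as the genuinely delicate point, and your filtered-presentation strategy (map a presented module $P$ onto $M$, filter compatibly, compare $\operatorname{gr}P$ with $E_\infty$) has the right shape, but you leave it as a plan and acknowledge yourself that the filtration-jumping nature of the relations makes the comparison nontrivial. In particular, the $E_\infty$ generators involving $\sqrt{\bar a_\lambda\bar u_\lambda}$ are middle-level elements reached from the top-level generators only via the transfer (Lemmas \ref{Liftoflambdaplus1}--\ref{Liftoflambdaplus1sqrt}), so the surjection $\operatorname{gr}\phi\colon\operatorname{gr}P\to E_\infty$ you invoke needs an honest accounting of how those classes arise in the presented module; this isn't addressed. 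Until that bookkeeping is carried out, part (b) remains a sketch rather than a proof — which is also, implicitly, the state of the paper's own treatment.
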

 \begin{proof}For $m>0$, we have the possible extensions:
 	\begin{equation}
 	\frac{a_{\sigma}^2}{a_{\lambda}^m}\frac{e^{j,u}}{u_{\sigma}^i}=\sum_*\epsilon_{*}\frac{\theta}{a_\sigma^*u_\sigma^*a_\lambda^*}e^{*\rho+*}
 	\end{equation}
 	where each $*$ denotes a nonnegative index (with different instances of $*$ being possibly different indices) and each $\epsilon_*=0,1$. Thus, multiplication by $a_\lambda$ is an isomorphism for both sides which reduces us to $m=1$. For $m=1$ and $i>0$ there are no extensions i.e. $\epsilon_*=0$ for all $*$. This establishes 
 	\begin{equation}
 		\frac{a_{\sigma}^2}{a_{\lambda}^m}\frac{e^{j,u}}{u_{\sigma}^i}=0
 	\end{equation}
	Similarly, if $m>0$, we have the possible extensions:
	\begin{equation}
	\frac{a_{\sigma}^2}{a_{\lambda}^m}\frac{e^{j\rho+\lambda}}{u_{\sigma}^i}=\frac{s}{u_{\sigma}^{i-2}a_{\lambda}^{m-1}}e^{j\rho+\lambda}_{\surd}+\sum_*\epsilon_*\frac{\theta}{a_\sigma^*u_\sigma^*a_\lambda^*}e^{*\rho+*}
	\end{equation}
	and multiplying with $a_{\lambda}^m$ reduces us to:
		\begin{equation}
	a_{\sigma}^2\frac{e^{j\rho+\lambda}}{u_{\sigma}^i}=\sum_*\epsilon_*\frac{\theta}{a_\sigma^*u_\sigma^*a_\lambda^{*-m}}e^{*\rho+*}
	\end{equation}
	But
			\begin{equation}
	a_{\sigma}^2\frac{e^{j\rho+\lambda}}{u_{\sigma}^i}=a_\sigma\frac{e^{j\rho+\lambda+1}}{u_{\sigma}^{i-1}}=a_\sigma \Tr_2^4(e^{'j\rho+\lambda+1}\bar u_\sigma^{-i+1})=0
	\end{equation}
	hence $\epsilon_{*}=0$ for all $*$. Thus
	\begin{equation}
\frac{a_{\sigma}^2}{a_{\lambda}^m}\frac{e^{j\rho+\lambda}}{u_{\sigma}^i}=\frac{s}{u_{\sigma}^{i-2}a_{\lambda}^{m-1}}e^{j\rho+\lambda}_{\surd}
\end{equation}
and substituting
    \begin{gather}
e^{j\rho+\lambda}_{\surd}=\frac{u_{\lambda}}{u_\sigma}e^{j,a}+a_{\lambda}\frac{e^{j,u}}{u_\sigma}
\end{gather}
gives the desired relation. For $i=m=1$ we get $(a_\sigma^2/a_\lambda)(e^{j\rho+\lambda+1}/u_\sigma)=0$ which lifts the $E_\infty$ relations $v\bar u_\sigma e^{j\rho+\lambda}=v\bar u_\sigma^2e^{j\rho+\lambda+1}=0$.
\end{proof}
As special cases we get the relations:
	\begin{gather}a_{\sigma}^2\frac{e^{j\rho+\lambda}}{u_\sigma^i}=0\\
	\frac{a_{\sigma}^2}{a_\lambda}\frac{e^{j\rho+\lambda}}{u_\sigma}=0\\
	\frac{\frac{\theta}{a_{\lambda}}a_{\sigma}}{u_{\sigma}^i}e^{j,a}=\frac{a_{\sigma}^2}{a_{\lambda}}\frac{e^{j\rho+\lambda}}{u_{\sigma}^{i+2}}\\
	\frac{s}{a_{\lambda}^m}e^{j,u}=\frac{a_{\sigma}^2}{a_{\lambda}^{m+2}}\frac{e^{j\rho+\lambda}}{u_{\sigma}}\\
\end{gather}
for $i,j,m\ge 0$.

\subsection{Top level cup products}
 \begin{prop}As a $k^{\bigstar}_{C_4}$ algebra, $k^{\bigstar}_{C_4}(B_{C_4}\Sigma_{2+})$ is generated by $e^a,e^u/u_\sigma^i, e^\lambda/u_\sigma^i, e^\rho$.
 \end{prop}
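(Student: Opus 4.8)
The plan is to deduce the claim from the module-generation statement of Proposition~\ref{Generation}, by showing that the index $j$ in each of its four families of generators is produced by multiplication with $e^{\rho}$. Let $A\subseteq k^{\bigstar}_{C_4}(B_{C_4}\Sigma_{2+})$ be the $k^{\bigstar}_{C_4}$-subalgebra generated by $e^{a}=e^{0,a}$, the fractions $e^{u}/u_{\sigma}^{i}$ and $e^{\lambda}/u_{\sigma}^{i}$ ($i\ge 0$), and $e^{\rho}=e^{1\rho}$. By Proposition~\ref{Generation} it is enough to show that $e^{j\rho}$, $e^{j,a}$, $e^{j,u}/u_{\sigma}^{i}$ and $e^{j\rho+\lambda}/u_{\sigma}^{i}$ all lie in $A$ for every $i,j\ge 0$, and I would prove this by induction on $j$. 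The base case $j=0$ is immediate: $e^{0\rho}=1$, and $e^{0,a}=e^{a}$, $e^{0,u}/u_{\sigma}^{i}=e^{u}/u_{\sigma}^{i}$, $e^{0\rho+\lambda}/u_{\sigma}^{i}=e^{\lambda}/u_{\sigma}^{i}$ are algebra generators of $A$ by definition.

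For the inductive step, assume the four families of index $j$ lie in $A$ and multiply by $e^{\rho}\in A$. The cellular filtration of Section~\ref{BC4S2dec} is multiplicative, since the diagonal of the $C_4$-$H$-space $B_{C_4}\Sigma_2$ preserves the skeletal filtration; hence the associated-graded map $k^{\bigstar}_{C_4}(B_{C_4}\Sigma_{2+})\to E_{\infty}$ is a ring homomorphism. As $e^{\rho}$ has filtration exactly $4$ with leading term $e^{1\rho}$, and the pairings $gr_{4}\wedge gr_{4j+\delta}\to gr_{4(j+1)+\delta}$ for $\delta=0,1,2$ are the evident identifications of (orbits smashed with) representation spheres, the image of $e^{\rho}\cdot(\text{index-}j\text{ generator})$ in $E_{\infty}$ equals $e^{1\rho}$ times the leading term of that generator, which by the $E_{1}$-multiplication table is exactly the leading term of the corresponding index-$(j+1)$ generator; restricting to the free bottom level $k^{\bigstar}_{e}(B_{C_4}\Sigma_{2+})=k^{\bigstar}_{e}[\Res^{4}_{1}e^{u}]$---on which all of these classes become nonzero monomials by the $\Res^{4}_{1}$-formulas and induction---shows the relevant leading coefficient is nonzero. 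Thus $e^{\rho}e^{j\rho}$, $e^{\rho}e^{j,a}$, $e^{\rho}(e^{j,u}/u_{\sigma}^{i})$ and $e^{\rho}(e^{j\rho+\lambda}/u_{\sigma}^{i})$ are lifts, in the expected filtration, of $e^{(j+1)\rho}$, $a_{\lambda}e^{(j+1)\rho+\sigma}$, $(u_{\lambda}/u_{\sigma}^{i})e^{(j+1)\rho+\sigma}$ and $\bar u_{\sigma}^{-i}e^{(j+1)\rho+\lambda}$ respectively.

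It remains to identify these lifts with the index-$(j+1)$ generators themselves. For $e^{(j+1)\rho}$ this is automatic, its lift being unique (Subsection~\ref{TopLevelGeneration}). For $e^{j+1,u}/u_{\sigma}^{i}$ and $e^{(j+1)\rho+\lambda}/u_{\sigma}^{i}$ I would invoke the characterization of Proposition~\ref{HalfCoh}: $a_{\sigma}^{2}\cdot\big(e^{\rho}(e^{j,u}/u_{\sigma}^{i})\big)=e^{\rho}\cdot a_{\sigma}^{2}(e^{j,u}/u_{\sigma}^{i})=0$ by the inductive hypothesis, and similarly for $e^{j\rho+\lambda}/u_{\sigma}^{i}$, so uniqueness in that proposition forces the equalities (and preserves coherence of the $u_{\sigma}$-fractions). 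For $e^{j+1,a}$, which is pinned down only up to $u_{\sigma}e^{(j+1)\rho+\lambda+1}=\Tr_{2}^{4}(\bar u_{\sigma}e^{'(j+1)\rho+\lambda+1})$, we simply take $e^{j+1,a}:=e^{\rho}e^{j,a}$, a legitimate choice by the previous paragraph. This closes the induction, and with it the proposition. The step I expect to require the most care is the leading-term bookkeeping of the second paragraph---establishing multiplicativity of the spectral sequence, pinning down the target $E_{\infty}$-groups via Corollary~\ref{EinftyGen}, and carrying the coherence of the fractions along---after which the statement is a formal consequence of Proposition~\ref{Generation}.
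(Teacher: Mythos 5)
Your proof reaches the right conclusion but takes a genuinely different route from the paper's. The paper fixes the low-index generators and inducts on filtration: it expands $e^{j\rho}(e^u/u_\sigma^i)$ as a $k^{\bigstar}$-linear combination of module generators, observes that all terms other than $e^{j,u}/u_\sigma^i$ are either in lower filtration (handled by induction) or are multiples of $\theta$ times $e^{*\rho}$ or $e^{*,a}$ (already shown to lie in $A$), and solves. You instead induct on $j$ by multiplying by $e^\rho$ a single time per step, and invoke the uniqueness statement of Proposition~\ref{HalfCoh} to promote the identity on $E_\infty$-leading terms to the sharper equalities $e^\rho\cdot(e^{j,u}/u_\sigma^i)=e^{j+1,u}/u_\sigma^i$ and $e^\rho\cdot(e^{j\rho+\lambda}/u_\sigma^i)=e^{(j+1)\rho+\lambda}/u_\sigma^i$. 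This is a clean idea that the paper does not exploit, and it buys a stronger statement than what is needed. The cost is that your argument must actually pin down the $E_\infty$-leading term of each product, whereas the paper's more forgiving bookkeeping tolerates unknown error terms.

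There is a gap in exactly the spot you flag as delicate. Your bottom-level check of nonvanishing works for $e^{j,u}/u_\sigma^i$ and $e^{j\rho+\lambda}/u_\sigma^i$, whose $E_\infty$-leading terms $(u_\lambda/u_\sigma^i)e^{j\rho+\sigma}$ and $\bar u_\sigma^{-i}e^{j\rho+\lambda}$ survive to nonzero classes after $\Res^4_1$. It does \emph{not} work for $e^{j,a}$: its $E_\infty$-leading term is $a_\lambda e^{j\rho+\sigma}$, and $\Res^4_1(a_\lambda)=0$, so the bottom-level restriction of that leading term is identically zero; $\Res^4_1(e^{j,a})$ in fact lives in strictly higher filtration. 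Verifying that $e^{1\rho}\cdot a_\lambda e^{j\rho+\sigma}=a_\lambda e^{(j+1)\rho+\sigma}\ne 0$ on $E_\infty$ therefore needs a separate argument: factor out $a_\lambda$ using $k^{\bigstar}_{C_4}$-bilinearity of the associated-graded pairing, reduce to checking $e^{1\rho}\cdot e^{j\rho+\sigma}=e^{(j+1)\rho+\sigma}$ (which restriction to the bottom level does determine), then multiply back by $a_\lambda$ and quote that $a_\lambda e^{(j+1)\rho+\sigma}$ survives. This is an easy repair, but as written the bottom-level sentence does not cover this case. Finally, a small phrasing issue: the multiplicativity of the cellular filtration comes from an equivariant cellular approximation of the \emph{diagonal} $X\to X\times X$, which is unrelated to the $H$-space multiplication; ``the diagonal of the $C_4$-$H$-space'' conflates the two.
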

\begin{proof}First of all, $e^{j\rho}=(e^{\rho})^j$ since there are no extensions in degree $j\rho$ (to see that $(e^{\rho})^j\neq 0$ apply restriction). Let $A$ be the algebra span of $e^a,e^u/u_\sigma^i, e^\lambda/u_\sigma^i, e^\rho$. To see that $e^{j,a}\in A$ observe:
$$e^{j\rho}e^a=\epsilon a_\sigma a_\lambda e^{j\rho}+e^{j,a}+\epsilon'u_\sigma e^{j\rho+\lambda+1}$$ 
and since $$e^{j\rho+\lambda+1}=\Tr_2^4(e^{'j\rho+\lambda+1})=\Tr_2^4(e^{j\rho}e^{'\lambda+1})=e^{j\rho}e^{\lambda+1}$$ 
we get that $e^{j,a}\in A$ regardless of the status of $\epsilon,\epsilon'$. 

Now suppose by induction that all elements in filtration $\le 4j$ are in $A$. We have that:
$$e^{j\rho}\frac{e^u}{u_\sigma^i}=\cdots+\frac{e^{j,u}}{u_\sigma^i}+\sum \epsilon_{*}\frac{\theta}{a_\sigma^*u_\sigma^*a_\lambda^*}e^{*\rho}+\sum \epsilon'_{*}\frac{\theta}{a_\sigma^*u_\sigma^*a_\lambda^*}e^{*,a}$$
where $\cdots$ are in filtration $<4j+1$ hence in $A$. Since $e^{*\rho}, e^{*,a}\in A$ for any $*\ge 0$, we get $e^{j,u}/u_\sigma^i\in A$. This establishes that everything in filtration $\le 4j+1$ is in $A$.

Finally,
$$e^{j\rho}\frac{e^\lambda}{u_\sigma^i}=\cdots+\frac{e^{j\rho+\lambda}}{u_\sigma^i}+\sum \epsilon_{*}\frac{\theta}{a_\sigma^*u_\sigma^*a_\lambda^*}e^{*\rho}+\sum \epsilon_{*}'\frac{\theta}{a_\sigma^*u_\sigma^*a_\lambda^*}e^{*,a}$$
where $\cdots$ are in filtration $<4j+2$, so by the same argument $e^{j\rho+\lambda}/u_\sigma^i\in A$ as well. This completes the induction step.\end{proof}
 
 Inverting $u_{\sigma},u_{\lambda}$ gives
 \begin{equation}
 k^{hC_4 \bigstar}[e^{\rho},e^a,e^u,e^{\lambda}]
 \end{equation}
 modulo relations,  which is isomorphic to
 \begin{equation}
 k^{hC_4\bigstar}(B_{C_4}\Sigma_{2+})=k[a_{\sigma},a_{\lambda},u_{\sigma}^{\pm},u_{\lambda}^{\pm},w]/a_{\sigma}^2\text{ , }|w|=1
 \end{equation}
There are two possible choices for $w$, differing by $a_\sigma u_\sigma^{-1}$, but both work equally well for the following arguments.

\begin{prop}\label{ModifyLocalization}After potentially replacing the generators $e^a,e^u/u_\sigma^i,e^\lambda/u_\sigma^i$ with algebra generators in the same degrees of $k^{\bigstar}_{C_4}(B_{C_4}\Sigma_{2+})$ and satisfying the same already established relations, the localization map
	\begin{equation}
		k^{\bigstar}_{C_4}(B_{C_4}\Sigma_{2+})\to  k^{hC_4\bigstar}(B_{C_4}\Sigma_{2+})
	\end{equation}
	 is given by:
	\begin{align}
e^u&\mapsto u_{\sigma}u_{\lambda}w\\
e^{\lambda}&\mapsto u_{\lambda}w^2\\
e^a&\mapsto u_{\sigma}u_{\lambda}w^3+u_{\sigma}a_{\lambda}w\\
e^{\rho}&\mapsto u_{\sigma}u_{\lambda}w^4+a_\sigma u_\lambda w^3+u_{\sigma}a_{\lambda}w^2+a_\sigma a_\lambda w
	\end{align}
\end{prop}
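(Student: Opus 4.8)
The plan is to transport the whole question through the localization map $f\colon k^{\bigstar}_{C_4}(B_{C_4}\Sigma_{2+})\to k^{hC_4\bigstar}(B_{C_4}\Sigma_{2+})=k^{hC_4\bigstar}[w]$ induced by $k\to k^h$. By the discussion preceding the statement, inverting $u_\sigma$ and $u_\lambda$ turns $f$ into an isomorphism, and since in the localization $e^{j,u}/u_\sigma^i$ and $e^{j\rho+\lambda}/u_\sigma^i$ become $u_\sigma^{-i}$ times the images of $e^u$ and $e^\lambda$, the four classes $E^u:=f(e^u)$, $E^\lambda:=f(e^\lambda)$, $E^a:=f(e^a)$, $E^\rho:=f(e^\rho)$ generate $k^{hC_4\bigstar}[w]$ as a $k^{hC_4\bigstar}$-algebra and sit in degrees $\sigma+\lambda-2$, $\lambda$, $\sigma+\lambda$, $\rho$. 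The goal is to show that, after the harmless adjustments allowed in the statement, these four images are the advertised polynomials.

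First I would pin down $E^u$. A direct degree count in $k^{hC_4\bigstar}[w]=k[a_\sigma,u_\sigma^{\pm},a_\lambda,u_\lambda^{\pm},w]/a_\sigma^2$ shows the degree $\sigma+\lambda-2$ part is two-dimensional, spanned by $u_\sigma u_\lambda w$ and $a_\sigma u_\lambda$, so $E^u=\beta\,u_\sigma u_\lambda w+\alpha\,a_\sigma u_\lambda$. Reducing modulo the ideal $(a_\sigma,a_\lambda)$, the images of $E^u,E^\lambda,E^a,E^\rho$ must still generate $k[u_\sigma^{\pm},u_\lambda^{\pm},w]$ over $k[u_\sigma^{\pm},u_\lambda^{\pm}]$; but a second degree count shows that modulo $(a_\sigma,a_\lambda)$ these four classes can only be unit multiples of $w,w^2,w^3,w^4$ (or zero), and only $E^u$ can contribute a $w^1$, forcing $\beta=1$. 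Using $a_\sigma^2=0$ together with the already established vanishing relations $a_\sigma^2 e^u=0$ (Proposition \ref{HalfCoh}) and the gold relation $a_\sigma^2u_\lambda=0$, the class $e^u+\alpha\,a_\sigma u_\lambda$ is again an algebra generator in the same degree satisfying all the module relations of Proposition \ref{Generation+Rel} (the check uses only $a_\sigma^2u_\lambda=0$, $\theta a_\sigma=0$ and $\bar a_\sigma=0$); replacing $e^u$ by it, I may assume $E^u=u_\sigma u_\lambda w$ (this is the step that chooses $w$ among its two options).

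The remaining images are then obtained by propagating $E^u$ through the three low-filtration multiplicative relations $(e^u)^2=u_\sigma^2u_\lambda\,e^\lambda$, $\,e^\lambda e^u=u_\lambda e^a+a_\lambda e^u$, $\,e^a e^u=u_\sigma u_\lambda e^\rho+a_\sigma u_\lambda e^a$. Each of these is first established without reference to $f$: the product on the left lies in high filtration, Proposition \ref{Generation+Rel} expresses it as a $k^{\bigstar}_{C_4}$-combination of the listed generators, a filtration and degree count cuts this down to the single named term modulo a $\theta$-divisible (hence $f$-trivial) error, and the coefficient is read off by restricting to the middle and bottom levels, where the products of the $\bar e$'s and $\overline{\bar e}$'s are already known (subsection \ref{MidLevelComp} and the bottom-level computation). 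As in the previous step one uses the residual freedom to replace $e^\lambda$ by itself plus a degree $\lambda$ class of the cohomology, and $e^a$ by its other lift (differing by the transfer $u_\sigma e^{\lambda+1}$, cf.\ Lemma \ref{ResOfea}), to make these relations hold on the nose while keeping the module relations valid. Applying $f$ to the three relations and cancelling the now invertible $u_\sigma,u_\lambda$ one relation at a time yields $E^\lambda=u_\lambda w^2$, then $E^a=u_\sigma u_\lambda w^3+u_\sigma a_\lambda w$, then $E^\rho=u_\sigma u_\lambda w^4+a_\sigma u_\lambda w^3+u_\sigma a_\lambda w^2+a_\sigma a_\lambda w$, which are exactly the claimed formulas.

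I expect the main obstacle to be pinning down the coefficients that restriction cannot detect, namely those carrying a factor $a_\sigma$ (since $\bar a_\sigma=0$ and $\Res^4_1 a_\sigma=0$), together with the simultaneous bookkeeping of the three freedoms — the polynomial generator $w$, the lift $e^a$, and $e^\lambda$ up to degree $\lambda$ classes — so that a single coherent choice makes all four formulas hold at once; the verification that each replacement still satisfies the previously established module relations is routine once one notes that $a_\sigma$, hence $(\theta/a_\lambda)a_\sigma$, restricts to $0$ on the middle level and that $a_\sigma^2u_\lambda=0$, but it must be carried out. An alternative route to the non-$a_\sigma$ coefficients is to compare directly with the $C_2$ localization map, which by Remark \ref{Remark} is $\Res^4_2$ of $f$, using the known images $c\mapsto u_{\sigma_2}w$ and $b\mapsto a_{\sigma_2}w+u_{\sigma_2}w^2$ from subsection \ref{BC2S2}; this fixes everything except the $a_\sigma$-terms, which are then handled by the modification step.
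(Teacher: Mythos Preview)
Your approach works for $e^u$, $e^\lambda$, and $e^a$, and indeed your ``alternative route'' (restrict to $C_2$, then absorb the $a_\sigma$-ambiguities into the allowed modifications of $e^u$, $e^\lambda$, $e^a$) is exactly what the paper does for those three generators. The gap is $e^\rho$.

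The element $e^\rho$ has a \emph{unique} lift, so there is no freedom to modify it; and the change $w\mapsto w+a_\sigma u_\sigma^{-1}$ leaves $w^2$ and $w^4$ fixed while only altering $w,w^3$ by $a_\sigma$-multiples that are then killed by the existing $a_\sigma$ in the $\epsilon_4,\epsilon_5$ terms. So the two coefficients $\epsilon_4,\epsilon_5$ in
\[
E^\rho=u_\sigma u_\lambda w^4+\epsilon_4\,a_\sigma u_\lambda w^3+u_\sigma a_\lambda w^2+\epsilon_5\,a_\sigma a_\lambda w
\]
are genuine invariants, not absorbable by any of your three freedoms. Your plan to extract them from the relation $e^a e^u=u_\sigma u_\lambda e^\rho+a_\sigma u_\lambda e^a$ is circular: that relation itself has undetermined $a_\sigma$-coefficients (the terms $\epsilon_2\,a_\sigma a_\lambda e^u$ and $\epsilon_3\,a_\sigma u_\lambda e^a$ restrict to zero on the middle level, so restriction cannot see them), and in the paper those coefficients are pinned down \emph{using} the localization map, i.e.\ using the very formulas you are trying to prove. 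Unwinding, your method yields only $E^\rho=u_\sigma u_\lambda w^4+u_\sigma a_\lambda w^2+\epsilon_3\,a_\sigma u_\lambda w^3+(\epsilon_2+\epsilon_3)\,a_\sigma a_\lambda w$ with $\epsilon_2,\epsilon_3$ still unknown.

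The paper supplies the missing ingredient: the Bockstein $\beta$. One has $\beta(u_\sigma)=a_\sigma$, $\beta(a_\sigma)=\beta(a_\lambda)=\beta(u_\lambda)=0$, and in the target $\beta(w)=w^2$, $\beta(w^3)=w^4$; on the other hand $\beta(e^\rho)=0$ because $k^{\rho+1}_{C_4}(B_{C_4}\Sigma_{2+})=0$. Applying $\beta$ to the displayed expression for $E^\rho$ forces $\epsilon_4=\epsilon_5=1$. This is an honestly new piece of information, orthogonal to restriction and to the filtration/multiplicative bookkeeping you describe; without it (or something equivalent) your argument cannot close.
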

\begin{proof}\iffalse
 \begin{itemize}
 	\item $e^u\mapsto ?u_{\sigma}u_{\lambda}w+?a_{\sigma}u_{\lambda}$
 	\item $e^{\lambda}\mapsto ?u_{\lambda}w^2+?a_{\sigma}u_{\sigma}^{-1}u_{\lambda}w+?a_{\lambda}$
 	\item $e^a\mapsto ?u_{\sigma}u_{\lambda}w^3+?a_{\sigma}u_{\lambda}w^2+?u_{\sigma}a_{\lambda}w+?a_{\sigma}a_{\lambda}$
 	\item $e^{\rho}\mapsto ?u_{\sigma}u_{\lambda}w^4+?a_{\sigma}u_{\lambda}w^3+?u_{\sigma}a_{\lambda}w^2+?a_{\sigma}a_{\lambda}w$
 \end{itemize}
where each $?$ is $0$ or $1$.
\fi 
Using the $C_2$ result (see subsection \ref{BC2S2}), we have the correspondence on the middle level generators:
 \begin{itemize}
	\item $\bar e^u\mapsto \bar u_{\sigma}\bar u_{\lambda}w$
	\item $e^{\lambda}\mapsto \bar u_{\lambda}w^2$
	\item $\Res^4_2(e^a)\mapsto \bar u_{\sigma}(\bar u_{\lambda}w^3+\bar a_{\lambda}w)$
	\item $\bar e^{\rho}\mapsto \bar u_{\sigma}(\bar a_\lambda w^2+\bar u_\lambda w^4)$
\end{itemize}
from which we can deduce that the correspondence on top level is:
 \begin{itemize}
	\item $e^u\mapsto u_{\sigma}u_{\lambda}w+\epsilon_1a_{\sigma}u_{\lambda}$
	\item $e^{\lambda}\mapsto u_{\lambda}w^2+\epsilon_2a_{\sigma}u_{\sigma}^{-1}u_{\lambda}w$
	\item $e^a\mapsto u_{\sigma}u_{\lambda}w^3+\epsilon_3a_{\sigma}u_{\lambda}w^2+u_{\sigma}a_{\lambda}w$
	\item $e^{\rho}\mapsto u_{\sigma}u_{\lambda}w^4+\epsilon_4a_{\sigma}u_{\lambda}w^3+u_{\sigma}a_{\lambda}w^2+\epsilon_5a_{\sigma}a_{\lambda}w$
\end{itemize}
where the $\epsilon_i$ range in $0,1$.

We may add $\epsilon_1a_{\sigma}u_{\lambda}/u_{\sigma}^i$ to  $e^u/u_{\sigma}^i$ to force $\epsilon_1=0$; we may add $\epsilon_2a_{\sigma}e^u/u_{\sigma}^{i+2}$ to $e^{\lambda}/u_{\sigma}^i$ to force $\epsilon_2=0$ and we may add $\epsilon_3a_{\sigma}e^{\lambda}$ to $e^a$ to force $\epsilon_3=0$. \\
It remains to prove that $\epsilon_4=\epsilon_5=1$. This is a computation based on  the Bockstein homomorphism $\beta:k^{\bigstar}_{C_4}(X)\to k^{\bigstar+1}_{C_4}(X)$. For $X=S^0$ we have:
\begin{gather}
	\beta(a_\sigma)=\beta(a_\lambda)=\beta(u_\lambda)=0\\
	\beta(u_\sigma)=a_\sigma
\end{gather} 
For $X=B_{C_4}\Sigma_{2+}$ we see that $\beta(e^{\rho})=0$ for degree reasons ($k^{\rho+1}_{C_4}(B_{C_4}\Sigma_{2+})=0$) and in the homotopy fixed points, $\beta(w)=w^2, \beta(w^3)=w^4$. Thus applying $\beta$ on $e^{\rho}\mapsto u_{\sigma}u_{\lambda}w^4+\epsilon_4a_{\sigma}u_{\lambda}w^3+u_{\sigma}a_{\lambda}w^2+\epsilon_5a_{\sigma}a_{\lambda}w$ shows that $\epsilon_4=\epsilon_5=1$.
\end{proof}

\begin{prop}

 We have the multiplicative relations in $k^{\bigstar}_{C_4}(B_{C_4}\Sigma_{2+})$:
   \begin{gather}
    \frac{e^u}{u_{\sigma}^i}\frac{e^u}{u_{\sigma}^j}
    =\frac{u_{\lambda}}{u_{\sigma}^{i+j-2}}e^{\lambda}\\
    \frac{e^{\lambda}}{u_{\sigma}^i}\frac{e^u}{u_{\sigma}^j}
    =\frac{u_{\lambda}}{u_{\sigma}^{i+j}}e^a+a_{\lambda}\frac{e^u}{u_{\sigma}^{i+j}}\\
    e^a\frac{e^u}{u_{\sigma}^i}
    =\frac{u_{\lambda}}{u_{\sigma}^{i-1}}e^{\rho}+a_\sigma \frac{u_{\lambda}}{u_{\sigma}^i}e^a\\
    \frac{e^{\lambda}}{u_{\sigma}^i}\frac{e^{\lambda}}{u_{\sigma}^j}
    =\frac{u_{\lambda}}{u_{\sigma}^{i+j+1}}e^{\rho}+a_\sigma\frac{u_{\lambda}}{u_{\sigma}^{i+j+2}} e^a+a_{\lambda}\frac{e^{\lambda}}{u_{\sigma}^{i+j}}\\
     e^a\frac{e^{\lambda}}{u_{\sigma}^i}
     =\frac{e^u}{u_{\sigma}^{i+1}}e^{\rho}+a_\sigma \frac{u_\lambda}{u_{\sigma}^{i+1}}e^{\rho} \\
      (e^a)^2=
      u_{\sigma}e^{\lambda}e^{\rho}+a_\sigma \frac{e^u}{u_\sigma}e^\rho +u_{\sigma}a_{\lambda}e^{\rho}+a_\sigma a_\lambda e^a
 \end{gather}
 \end{prop}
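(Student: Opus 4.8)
The plan is to fix each of the six products $P$, compute its image under the two ``detecting'' maps --- the restriction $\Res^4_2$ to the middle level and the localization to $k^{hC_4\bigstar}(B_{C_4}\Sigma_{2+})$ --- match these with the asserted right-hand side $Q$, and then dispose of the discrepancy $\Delta=P-Q$, which lies in the kernel of both maps, by multiplying by suitable powers of $a_\sigma$ and $a_\lambda$ exactly as in Propositions \ref{HalfCoh} and \ref{Generation+Rel}. Concretely, $P$ and $Q$ lie in the same $RO(C_4)$-degree and, by Proposition \ref{Generation+Rel}, are $k^\bigstar_{C_4}$-linear combinations of the generators $e^{j\rho}$, $e^{j,a}$, $e^{j,u}/u_\sigma^i$, $e^{j\rho+\lambda}/u_\sigma^i$. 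Since $\Res^4_2$ is a ring map, $\Res^4_2(P)$ is the product of the restrictions of the two factors, which by the middle-level computation of subsection \ref{MidLevelComp} is computed explicitly through the $C_2$-classes $c,b$ of subsection \ref{BC2S2} (using the exotic restriction $\Res^4_2(e^a)=\tilde e^{a}+\bar u_\sigma e^{'\lambda+1}$ of Lemma \ref{ResOfea} whenever $e^a$ is a factor). Likewise the image of $P$ in $k^{hC_4\bigstar}(B_{C_4}\Sigma_{2+})=k[a_\sigma,u_\sigma^{\pm},a_\lambda,u_\lambda^{\pm},w]/a_\sigma^2$ is a direct polynomial computation with the substitutions of Proposition \ref{ModifyLocalization}. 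In each case one checks that $\Res^4_2(Q)$ and the localization of $Q$ agree with these, so that $\Delta$ lies in the kernel of $\Res^4_2$ and localizes to $0$.

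\textbf{Killing $\Delta$.} It remains to prove $\Delta=0$. Since $\Res^4_2(\theta)=0$, since by the Frobenius relation $\theta$ annihilates the image of $\Tr^4_2$ (hence all the transfer lifts $e^{j\rho+\lambda+1}/u_\sigma^i$ and $e^{j\rho+\lambda+1}_\surd/u_\sigma^i$), and since the negative-cone classes $\theta,s$ (as well as $a_\sigma^2/a_\lambda^m$) map to $0$ under the localization, the module structure of Proposition \ref{Generation+Rel} together with the explicit form \eqref{C4homologypoint} of $k^\bigstar_{C_4}$ forces $\Delta$ --- in the degree at hand and in filtration strictly above the leading term of $Q$ --- to be a sum of negative-cone terms: of $\frac{\theta}{a_\sigma^{k}u_\sigma^{l}a_\lambda^{m}}e^{j\rho}$, of $\frac{\theta}{a_\sigma^{k}u_\sigma^{l}a_\lambda^{m}}e^{j,a}$, and of $\frac{a_\sigma^2}{a_\lambda^m}\frac{e^{j\rho+\lambda}}{u_\sigma^i}$ (the last equal to $\frac{(\theta/a_\lambda)a_\sigma}{u_\sigma^{i-2}a_\lambda^{m-1}}e^{j,a}+\frac{s}{u_\sigma^{i-1}a_\lambda^{m-2}}e^{j,u}$ by the module relations). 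One now runs the filtration induction of Proposition \ref{HalfCoh}: let $s_0$ be maximal with $\Delta\in F^{s_0}$, and multiply $P=Q+\Delta$ by a suitable monomial in $a_\sigma$ and $a_\lambda$ --- first clearing any $a_\lambda$ from the denominator of the $F^{s_0}$-part (multiplication by $a_\lambda$ is invertible on the summand $k[a_\lambda^{\pm}]\{\theta/(a_\sigma^iu_\sigma^j)\}$ of \eqref{C4homologypoint}), then applying $a_\sigma$ or $a_\sigma^2$. On the left one uses $a_\sigma^2(e^{j,u}/u_\sigma^i)=a_\sigma^2(e^{j\rho+\lambda}/u_\sigma^i)=0$ (Proposition \ref{HalfCoh}) and the fact that $a_\sigma\, e^{j\rho+\lambda}/u_\sigma^i$ is a transfer (Lemma \ref{Liftoflambdaplus1}) to express the multiplied $P$ through relations already proven; on the right, since the negative cone of $k^\bigstar_{C_4}$ is annihilated by $a_\sigma^3$ (recall $\theta a_\sigma=0$ and $(\theta/a_\lambda)a_\sigma^3=0$), a high enough power of $a_\sigma$ kills every admissible summand of $\Delta$. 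Comparing the two sides forces the $F^{s_0}$-part of $\Delta$ to vanish, and descending in $s_0$ gives $\Delta=0$.

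\textbf{The main obstacle.} The real work is the bookkeeping for the last relation $(e^a)^2=u_\sigma e^\lambda e^\rho+a_\sigma(e^u/u_\sigma)e^\rho+u_\sigma a_\lambda e^\rho+a_\sigma a_\lambda e^a$: its leading filtration is the highest of the six, so the supply of admissible correction terms is largest and the induction above is the longest; moreover $e^a$ is defined only up to $u_\sigma e^{\lambda+1}=\Tr^4_2(\bar u_\sigma e^{'\lambda+1})$, so one must additionally check that the relation is insensitive to this choice. Replacing $e^a$ by $e^a+u_\sigma e^{\lambda+1}$ changes $(e^a)^2$ by $u_\sigma^2(e^{\lambda+1})^2$ (we are in characteristic $2$), and since $e^{\lambda+1}=\Tr^4_2(cb)$ the Frobenius relation exhibits $u_\sigma^2(e^{\lambda+1})^2$ inside the span of the asserted right-hand side, so the relation holds for either lift of $a_\lambda e^\sigma$. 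The remaining five relations run on the same template and are shorter, their products sitting in lower filtration; for the ones of lowest filtration the data $\Res^4_2(P)$ together with the localization of $P$ already determine $P$ outright, with no negative-cone correction possible.
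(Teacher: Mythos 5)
Your overall template matches the paper's: for each product $P$, constrain the extension problem by (i) restricting to the middle level, (ii) mapping to homotopy fixed points, and (iii) multiplying by suitable Euler classes to kill ambiguities, exactly as the paper does when it writes $P$ as an explicit $k^{\bigstar}_{C_4}$-linear combination with unknown $\epsilon_i\in\{0,1\}$ and then pins each one down.

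The gap is in your paragraph ``Killing $\Delta$.'' You propose to finish by multiplying $P=Q+\Delta$ by a high enough power of $a_\sigma$ (and $a_\lambda$) to annihilate every admissible summand of $\Delta$. But a power of $a_\sigma$ large enough to kill $\Delta$ also kills the left-hand side (since $a_\sigma^2\cdot(e^{j,u}/u_\sigma^i)=a_\sigma^2\cdot(e^{j\rho+\lambda}/u_\sigma^i)=0$), so the resulting comparison reads $0=0$ and forces nothing. This matters concretely for the filtration-$0$ terms such as $\theta a_\lambda^2/(a_\sigma u_\sigma^{i+j-2})$ (the $\epsilon_0$ term in the paper's expansion of $(e^\lambda/u_\sigma^i)(e^u/u_\sigma^j)$): it has $a_\sigma$ in its denominator, so it already dies under $a_\sigma^2$, and $a_\sigma^2$-multiplication cannot distinguish $\epsilon_0=0$ from $\epsilon_0=1$. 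The paper deals with precisely this term by one of two separate arguments: an explicit middle-level computation using $a_\sigma e^\lambda=\Tr_2^4(e^{'\lambda+1})$ and the Frobenius relation, or the multiplicativity of the spectral-sequence filtration (if $a,b$ are in filtration $\ge m,\ge n$, then $ab$ is in filtration $\ge m+n$) \emph{together} with a check that the generator modifications made in the proof of Proposition \ref{ModifyLocalization} never introduce $\theta$-terms, so that the filtration statement survives the modification. Your phrase ``in filtration strictly above the leading term of $Q$'' tacitly appeals to this multiplicativity, but you should state it explicitly and carry out the compatibility check with the modified generators; as written, the high-$a_\sigma$-power argument does not close the loop. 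Your observation about the ambiguity in $e^a$ for the $(e^a)^2$ relation is a reasonable and correct point to raise.
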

 \begin{proof}First,
 	\begin{equation}
 	 \frac{e^u}{u_\sigma^i}\frac{e^u}{u_{\sigma}^j}=\epsilon_0a_{\lambda}\frac{u_{\lambda}}{u_{\sigma}^{i+j-2}}+\epsilon_1a_{\sigma}\frac{u_{\lambda}}{u_{\sigma}^{i+j}}e^u+\epsilon_2\frac{u_{\lambda}}{u_{\sigma}^{i+j-2}}e^{\lambda}+\cdots
 	\end{equation}
 	where $\epsilon_i=0,1$ and $\cdots$ is the sum of elements mapping to $0$ in homotopy fixed points, but all having denominator $a_{\sigma}^2$. Mapping to homotopy fixed points shows $\epsilon_0=\epsilon_1=0$ and $\epsilon_2=1$, while multiplying by $a_{\sigma}^2$ trivializes the LHS (by $a_{\sigma}^2(e^u/u_{\sigma}^i)=0$) and thus shows that $\cdots=0$.
 	
 	The same argument applied to:
 	 	\begin{equation}
 	\frac{e^\lambda}{u_\sigma^i}\frac{e^u}{u_{\sigma}^j}=\epsilon_0\frac{\theta a_{\lambda}^2}{a_\sigma u_{\sigma}^{i+j-2}}+\epsilon_1a_\sigma a_{\lambda}\frac{u_{\lambda}}{u_{\sigma}^{i+j}}+\epsilon_2a_{\lambda}\frac{e^u}{u_{\sigma}^{i+j}}+\epsilon_3\frac{u_{\lambda}}{u_{\sigma}^{i+j}}e^a+\epsilon_4a_\sigma\frac{u_{\lambda}}{u_{\sigma}^{i+j}}e^{\lambda}+\cdots
 	\end{equation}
 	shows 
 	 	 	\begin{equation}
 	\frac{e^\lambda}{u_\sigma^i}\frac{e^u}{u_{\sigma}^j}=\epsilon_0\frac{\theta a_{\lambda}^2}{a_\sigma u_{\sigma}^{i+j-2}}+a_{\lambda}\frac{e^u}{u_{\sigma}^{i+j}}+\frac{u_{\lambda}}{u_{\sigma}^{i+j}}e^a
 	\end{equation}
 	There are two ways to show that $\epsilon_0=0$: The first is to multiply with $a_{\sigma}u_\sigma^{i+j-2}$ and compute $a_\sigma e^\lambda (e^u/u_\sigma^2)$ using $a_\sigma e^\lambda=\Tr_2^4(e^{'\lambda+1})$ together with the Frobenius relation and our knowledge of the multiplicative structure of the middle level from subsection \ref{MidLevelComp}. The alternative is to observe that in the spectral sequence, if $a,b$ live in filtrations $\ge n$ then so does $ab$. Before the modifications to the generators done in the proof of Proposition \ref{ModifyLocalization}, $e^u/u_\sigma^i, e^a$ were in filtration $\ge 1$ and $e^\lambda/u_\sigma^i$ were in filtration $\ge 2$. Thus, with the original  generators, the extension for $e^\lambda e^u/u_\sigma^2$ does not involve the filtration $0$ term $a_{\lambda}^2\theta/a_\sigma $. This is true even after performing the modifications prescribed in the proof of Proposition \ref{ModifyLocalization}, since said modifications never involve terms with $\theta$. Thus $\epsilon_0=0$.

 Similarly we have:
 	 \begin{equation}
 	 e^a\frac{e^u}{u_{\sigma}^i}=\epsilon_0\frac{\theta a_{\lambda}^2}{u_{\sigma}^{i-4}}+\epsilon_1\frac{\theta a_{\lambda}}{a_\sigma u_{\sigma}^{i-3}}e^a+\epsilon_2a_\sigma a_{\lambda}\frac{e^u}{u_{\sigma}^i}+\epsilon_3a_\sigma\frac{u_{\lambda}}{u_{\sigma}^{i}}e^a+\epsilon_4a_\lambda\frac{e^{\lambda}}{u_{\sigma}^{i-2}}+
 	 \epsilon_5\frac{u_\lambda}{u_\sigma^{i-1}}e^\rho+\cdots
 	 \end{equation}
 for $i\ge 3$, and mapping to homotopy fixed points and multiplying by $a_\sigma^2$ shows
 	  	 \begin{equation}
 	 e^a\frac{e^u}{u_{\sigma}^i}=\epsilon_0\frac{\theta a_{\lambda}^2}{u_{\sigma}^{i-4}}+\epsilon_1\frac{\theta a_{\lambda}}{a_\sigma u_{\sigma}^{i-3}}e^a+a_\sigma\frac{u_{\lambda}}{u_{\sigma}^{i}}e^a+\frac{u_\lambda}{u_\sigma^{i-1}}e^\rho
 	 \end{equation}
 Multiplying by $a_\sigma$ and using that $a_{\sigma}(e^u/u_{\sigma}^i)=\Tr_2^4(e^{au}\bar u_{\sigma}^{-i})$ shows that $\epsilon_1=0$. To show $\epsilon_0=0$ we use the filtration argument above.
 	
 	These arguments also work with:
 		 \begin{align}
 	\frac{e^\lambda}{u_\sigma^i}\frac{e^\lambda}{u_{\sigma}^j}&=\epsilon_0\frac{\theta a_{\lambda}^2}{u_{\sigma}^{i+j-2}}+\epsilon_1\frac{\theta a_{\lambda}}{a_\sigma u_{\sigma}^{i+j-2}}e^a+\epsilon_2a_\sigma a_{\lambda}\frac{e^u}{u_{\sigma}^{i+j-2}}+\epsilon_3a_\sigma\frac{u_{\lambda}}{u_{\sigma}^{i+j+2}}e^a+\epsilon_4a_\lambda\frac{e^{\lambda}}{u_{\sigma}^{i+j}}+\\
 	&+\epsilon_5\frac{u_\lambda}{u_\sigma^{i+j+1}}e^\rho+\cdots\\
 	 e^a\frac{e^\lambda}{u_\sigma^i}&=\epsilon_6\frac{\theta a_{\lambda}}{a_\sigma u_{\sigma}^{i-2}}e^a+\epsilon_7a_\sigma a_{\lambda}\frac{e^\lambda}{u_{\sigma}^{i-1}}+\epsilon_8\frac{a_\sigma u_\lambda}{u_\sigma^{i+1}}e^\rho+\epsilon_9\frac{\theta a}{a_\sigma a_\lambda u_\sigma^{i-3}}e^\rho+\epsilon_{10}\frac{u_\lambda}{u_\sigma^{i+1}}e^\rho e^u+\cdots\\
 	 (e^a)^2&=\epsilon_{11}a_\sigma^2a_\lambda^2+\epsilon_{12}a_\sigma a_\lambda e^a+\epsilon_{13}u_\sigma a_\lambda e^\rho+\epsilon_{14}a_\sigma \frac{e^u}{u_\sigma}e^\rho+\epsilon_{16}u_\sigma e^\lambda e^\rho
 	\end{align}
 to complete the proof.
 \end{proof}
% We have that $k^{\bigstar}_{C_4}(B_{C_4}\Sigma_{2+})$ is linearly spanned by $e^{j\rho}, \frac{e^u}{u_{\sigma}^i}e^{j\rho}, \frac{e^{\lambda}}{u_{\sigma}^i}e^{j\rho}, e^ae^{j\rho}$.\medbreak

 We also have the nontrivial Bockstein:
 \begin{equation}
 	\beta(e^u/u_\sigma)=e^\lambda
 \end{equation}
 
\appendix

 \section{Pictures of the spectral sequence}\label{AppendixSS}
  
 In this appendix, we have included pictures of the $E_1$ page of the spectral sequence from section  \ref{BC4S2ss}. In each page, the three levels of the spectral sequence are drawn in three separate figures from top to bottom, using $(V,s)$ coordinates. For notational simplicity and due to limited space, we suppress the $e^V$'s and $x,gx$'s from the generators. The $e^V$'s can be recovered by looking at the filtration $s$ (e.g. in filtration $s=4j$ we get $e^{j\rho}$) and to denote the presence of $2$-dimensional vector spaces $k\{x,gx\}$ we write $k^2$ next to each generator. 
 
 For example, in the very first picture there is an element $x_{0,1}/u_{\sigma}^2$ in coordinates $(5,5)$. This represents the fact that the top level of $E_1^{5,5}$ is generated by $(x_{0,1}/u_{\sigma}^2)e^{\rho+\sigma}$. In the picture directly below and in the same coordinates we have $v \bar u_{\sigma}^{-2}$ meaning that the middle level of $E_1^{5,5}$ is generated by $(v\bar u_{\sigma}^{-2})\bar e^{\rho+\sigma}$. We have $$\Tr_2^4(v\bar u_{\sigma}^{-2}\bar e^{\rho+\sigma})=\frac{x_{0,1}}{u_{\sigma}^2}e^{\rho+\sigma}$$
 
 In the same picture, if we look at coordinates $(2,2)$ we see $v, vk^2, \overline{\bar u}_\lambda^{-1}k^2$ in the top, middle and bottom levels respectively. This represents that the three levels of $E_1^{2,2}$ are generated by $ve^{\lambda}(x+gx)$ for the top,  $v\bar e^{\lambda}x, v\bar e^{\lambda}gx$ for the middle, and $\overline{\bar u}_\lambda^{-1}\bar e^{\lambda}x, \overline{\bar u}_\lambda^{-1}\overline{\bar e}^{\lambda}gx$ for the bottom level. We have $$\Tr_2^4(v\bar e^{\lambda}x)=ve^{\lambda}(x+gx)$$
 These pictures are all  obtained automatically by the computer program of \cite{Geo19} available \href{https://github.com/NickG-Math/Mackey}{here}.
 \newpage

 \thispagestyle{empty}
 \begin{figure}[H]
 	\begin{center}
 		\begin{tikzpicture}[every node/.style={},xscale=1.5,yscale=1]
 		\foreach \from in {0,...,6}
 		\node at (\from,-0.8){$\from$};
 		\foreach \from in {0,...,6}
 		\node at (-0.6,\from){$\from$};
 		\draw [thick] (-0.3,-0.5) -- (6.6,-0.5);
 		\draw [thick] (-0.3,-0.5) -- (-0.3,6.5);
 		\node (1) at (0,0){$1$};
 		\node (0) at (1,1) {$0$};
 		\node at (2,2) {$v$};
 		\node at (3,3) {$v$};
 		\node at (3,4) {$\frac{x_{0,1}}{a_{\sigma}}$};
 		\node at (4,4)   {$\frac{x_{0,1}}{u_{\sigma}}$};
 		\node at (3,5) {$\frac{\theta}{a_{\lambda}}$};
 		\node at (4,5) {$\frac{x_{0,1}}{a_{\sigma}u_{\sigma}}$};
 		\node at (5,5) {$\frac{x_{0,1}}{u_{\sigma}^2}$};
 		\node at (4,6) {$\frac{v}{\bar a_{\lambda}}\bar u_{\sigma}^{-1}$};
 		\node at (5,6) {$\bar s \bar u_{\sigma}^{-1}$};
 		\node at (6,6) {$\frac{v}{\bar u_{\lambda}}\bar u_{\sigma}^{-1}$};
 		\end{tikzpicture}
 		\begin{tikzpicture}[every node/.style={},xscale=1.5,yscale=1]
 		\foreach \from in {0,...,6}
 		\node at (\from,-0.8){$\from$};
 		\foreach \from in {0,...,6}
 		\node at (-0.6,\from){$\from$};
 		\draw [thick] (-0.3,-0.5) -- (6.6,-0.5);
 		\draw [thick] (-0.3,-0.5) -- (-0.3,6.5);
 		\node (1) at (0,0){$1$};
 		\node (us) at (1,1) {$\bar u_{\sigma}^{-1}$};
 		\node (v) at (2,2) {$vk^2$};
 		\node (v') at (3,3) {$vk^2$};
 		\node at (4,4)   {$v\bar u_{\sigma}^{-1}$};
 		\node at (5,5) {$v\bar u_{\sigma}^{-2}$};
 		\node at (4,6) {$\frac{v}{\bar a_{\lambda}}\bar u_{\sigma}^{-1}k^2$};
 		\node at (5,6) {$\bar s \bar u_{\sigma}^{-1}k^2$};
 		\node at (6,6) {$\frac{v}{\bar u_{\lambda}}\bar u_{\sigma}^{-1}k^2$};
 		\draw [->] (us) -- (v);
 		\draw [->] (v) -- (v');
 		\end{tikzpicture}
 		\begin{tikzpicture}[every node/.style={},xscale=1.5,yscale=1]
 		\foreach \from in {0,...,6}
 		\node at (\from,-0.8){$\from$};
 		\foreach \from in {0,...,6}
 		\node at (-0.6,\from){$\from$};
 		\draw [thick] (-0.3,-0.5) -- (6.6,-0.5);
 		\draw [thick] (-0.3,-0.5) -- (-0.3,6.5);
 		\node (1) at (0,0){$1$};
 		\node (us) at (1,1) {$\overline{\bar u}_{\sigma}^{-1}$};
 		\node (v) at (2,2) {$\overline{\bar u}_{\lambda}^{-1}k^2$};
 		\node (v') at (3,3) {$\overline{\bar u}_{\lambda}^{-1}k^2$};
 		\node at (4,4)   {$\overline{\bar u}_{\sigma}^{-1}\overline{\bar u}_{\sigma}^{-2}$};
 		\node at (5,5) {$\overline{\bar u}_{\sigma}^{-2}\overline{\bar u}_{\lambda}^{-1}$};
 		\node at (6,6) {$\overline{\bar u}_{\sigma}^{-1}\overline{\bar u}_{\lambda}^{-2}k^2$};
 		\draw [->] (v) -- (v');
 		\end{tikzpicture}
 	\end{center}
 \end{figure}
 \thispagestyle{empty}
 \begin{figure}[H]
 	\begin{center}
 		\begin{tikzpicture}[every node/.style={},xscale=1.5,yscale=1]
 		\node at (0,-0.8) {$\sigma-1$};
 		\node at (1,-0.8) {$\sigma$};
 		\foreach \from in {1,...,5}
 		\node at (\from+1,-0.8){$\sigma+\from$};
 		\foreach \from in {0,...,6}
 		\node at (-0.6,\from){$\from$};
 		\draw [thick] (-0.3,-0.5) -- (6.6,-0.5);
 		\draw [thick] (-0.3,-0.5) -- (-0.3,6.5);
 		\node at (0,0){$u_{\sigma}$};
 		\node at (1,0){$a_{\sigma}$};
 		\node (1) at (1,1) {$1$};
 		\node (v) at (2,2) {$v\bar u_{\sigma}$};
 		\node at (3,3) {$v\bar u_{\sigma}$};
 		\node at (4,4)   {$x_{0,1}$};
 		\node at (4,5) {$\frac{x_{0,1}}{a_{\sigma}}$};
 		\node at (5,5) {$\frac{x_{0,1}}{u_{\sigma}}$};
 		\node at (4,6) {$\frac{v}{\bar a_{\lambda}}$};
 		\node at (5,6) {$\bar s$};
 		\node at (6,6) {$\frac{v}{\bar u_{\lambda}}$};
 		\draw [->] (1) -- (v);
 		\end{tikzpicture}
 		\begin{tikzpicture}[every node/.style={},xscale=1.5,yscale=1]
 		\node at (0,-0.8) {$\sigma-1$};
 		\node at (1,-0.8) {$\sigma$};
 		\foreach \from in {1,...,5}
 		\node at (\from+1,-0.8){$\sigma+\from$};
 		\foreach \from in {0,...,6}
 		\node at (-0.6,\from){$\from$};
 		\draw [thick] (-0.3,-0.5) -- (6.6,-0.5);
 		\draw [thick] (-0.3,-0.5) -- (-0.3,6.5);
 		\node at (0,0){$\bar u_{\sigma}$};
 		\node (1) at (1,1) {$1$};
 		\node (v) at (2,2) {$v\bar u_{\sigma}k^2$};
 		\node (v') at (3,3) {$v\bar u_{\sigma}k^2$};
 		\node at (4,4)   {$v$};
 		\node at (5,5) {$v\bar u_{\sigma}^{-1}$};
 		\node at (4,6) {$\frac{v}{\bar a_{\lambda}}k^2$};
 		\node at (5,6) {$\bar sk^2$};
 		\node at (6,6) {$\frac{v}{\bar u_{\lambda}}k^2$};
 		\draw [->] (1) -- (v);
 		\draw [->] (v) -- (v');
 		\end{tikzpicture}
 		\begin{tikzpicture}[every node/.style={},xscale=1.5,yscale=1]
 		\node at (0,-0.8) {$\sigma-1$};
 		\node at (1,-0.8) {$\sigma$};
 		\foreach \from in {1,...,5}
 		\node at (\from+1,-0.8){$\sigma+\from$};
 		\foreach \from in {0,...,6}
 		\node at (-0.6,\from){$\from$};
 		\draw [thick] (-0.3,-0.5) -- (6.6,-0.5);
 		\draw [thick] (-0.3,-0.5) -- (-0.3,6.5);
 		\node at (0,0){$\overline{\bar u}_{\sigma}$};
 		\node (1) at (1,1) {$1$};
 		\node (v) at (2,2) {$\overline{\bar u}_{\sigma}\overline{\bar u}_{\lambda}^{-1}k^2$};
 		\node (v') at (3,3) {$\overline{\bar u}_{\sigma}\overline{\bar u}_{\lambda}^{-1}k^2$};
 		\node at (4,4)   {$\overline{\bar u}_{\lambda}^{-1}$};
 		\node at (5,5) {$\overline{\bar u}_{\sigma}^{-1}\overline{\bar u}_{\lambda}^{-1}$};
 		\node at (6,6) {$\overline{\bar u}_{\lambda}^{-2}k^2$};
 		\draw [->] (v) -- (v');
 		\end{tikzpicture}
 	\end{center}
 \end{figure}
 \thispagestyle{empty}
 \begin{figure}[H]
 	\thispagestyle{empty}
 	\begin{center}
 		\begin{tikzpicture}[every node/.style={},xscale=1.5,yscale=1]
 		\node at (0,-0.8) {$\lambda-2$};
 		\node at (1,-0.8) {$\lambda-1$};
 		\node at (2,-0.8) {$\lambda$};
 		\foreach \from in {1,...,4}
 		\node at (\from+2,-0.8){$\lambda+\from$};
 		\foreach \from in {0,...,6}
 		\node at (-0.6,\from){$\from$};
 		\draw [thick] (-0.3,-0.5) -- (6.6,-0.5);
 		\draw [thick] (-0.3,-0.5) -- (-0.3,6.5);
 		\node at (0,0){$u_{\lambda}$};
 		\node at (1,0){$\frac{a_{\sigma}u_{\lambda}}{u_{\sigma}}$};
 		\node at (2,0){$a_{\lambda}$};
 		\node at (1,1) {$\frac{u_{\lambda}}{u_{\sigma}}$};
 		\node at (2,1) {$\frac{a_{\sigma}u_{\lambda}}{u_{\sigma}^2}$};
 		\node at (2,2) {$1$};
 		\node at (3,3) {$1$};
 		\node at (4,4)   {$0$};
 		\node at (5,5) {$\theta$};
 		\node at (6,6) {$v\bar u_{\sigma}^{-1}$};
 		\end{tikzpicture}
 		\begin{tikzpicture}[every node/.style={},xscale=1.5,yscale=1]
 		\node at (0,-0.8) {$\lambda-2$};
 		\node at (1,-0.8) {$\lambda-1$};
 		\node at (2,-0.8) {$\lambda$};
 		\foreach \from in {1,...,4}
 		\node at (\from+2,-0.8){$\lambda+\from$};
 		\foreach \from in {0,...,6}
 		\node at (-0.6,\from){$\from$};
 		\draw [thick] (-0.3,-0.5) -- (6.6,-0.5);
 		\draw [thick] (-0.3,-0.5) -- (-0.3,6.5);
 		\node at (0,0){$\bar u_{\lambda}$};
 		\node at (1,0){$\sqrt{\bar a_{\lambda}\bar u_{\lambda}}$};
 		\node at (2,0){$\bar a_{\lambda}$};
 		\node at (1,1) {$\bar u_{\lambda}\bar u_{\sigma}^{-1}$};
 		\node at (2,1) {$\sqrt{\bar a_{\lambda}\bar u_{\lambda}}\bar u_{\sigma}^{-1}$};
 		\node at (3,1) {$\bar a_{\lambda}\bar u_{\sigma}^{-1}$};
 		\node (1) at (2,2) {$1k^2$};
 		\node (1') at (3,3) {$1k^2$};
 		\node at (4,4)   {$\bar u_{\sigma}^{-1}$};
 		\node (u2) at (5,5) {$\bar u_{\sigma}^{-2}$};
 		\node (vu) at (6,6) {$v\bar u_{\sigma}^{-1}k^2$};
 		\draw [->] (1) -- (1');
 		\draw [->] (u2) -- (vu);
 		\end{tikzpicture}
 		\begin{tikzpicture}[every node/.style={},xscale=1.5,yscale=1]
 		\node at (0,-0.8) {$\lambda-2$};
 		\node at (1,-0.8) {$\lambda-1$};
 		\node at (2,-0.8) {$\lambda$};
 		\foreach \from in {1,...,4}
 		\node at (\from+2,-0.8){$\lambda+\from$};
 		\foreach \from in {0,...,6}
 		\node at (-0.6,\from){$\from$};
 		\draw [thick] (-0.3,-0.5) -- (6.6,-0.5);
 		\draw [thick] (-0.3,-0.5) -- (-0.3,6.5);
 		\node at (0,0){$\overline{\bar u}_{\lambda}$};
 		\node at (1,1) {$\overline{\bar u}_{\lambda}\overline{\bar u}_{\sigma}^{-1}$};
 		\node (1) at (2,2) {$1k^2$};
 		\node (1') at (3,3) {$1k^2$};
 		\node at (4,4)   {$\overline{\bar u}_{\sigma}^{-1}$};
 		\node (u2) at (5,5) {$\overline{\bar u}_{\sigma}^{-2}$};
 		\node (vu) at (6,6) {$\overline{\bar u}_{\sigma}^{-1}\overline{\bar u}_{\lambda}^{-1}k^2$};
 		\draw [->] (1) -- (1');
 		\end{tikzpicture}
 	\end{center}
 	\thispagestyle{empty}
 \end{figure}
 \thispagestyle{empty}
 \thispagestyle{empty}
 \begin{figure}[H]
 	\thispagestyle{empty}
 	\begin{center}
 		\begin{tikzpicture}[every node/.style={},xscale=1.5,yscale=1]
 		\node at (0,-0.8) {$\rho-4$};
 		\node at (1,-0.8) {$\rho-3$};
 		\node at (2,-0.8) {$\rho-2$};
 		\node at (3,-0.8) {$\rho-1$};
 		\node at (4,-0.8) {$\rho$};
 		\foreach \from in {1,...,2}
 		\node at (\from+4,-0.8){$	\rho+\from$};
 		\foreach \from in {0,...,6}
 		\node at (-0.6,\from){$\from$};
 		\draw [thick] (-0.3,-0.5) -- (6.6,-0.5);
 		\draw [thick] (-0.3,-0.5) -- (-0.3,6.5);
 		\node at (0,0){$u_\sigma u_{\lambda}$};
 		\node at (1,0){$a_{\sigma}u_{\lambda}$};
 		\node at (2,0){$u_\sigma a_{\lambda}$};
 		\node at (3,0){$a_\sigma a_{\lambda}$};
 		\node at (1,1) {$u_{\lambda}$};
 		\node at (2,1) {$\frac{a_{\sigma}u_{\lambda}}{u_{\sigma}}$};
 		\node at (3,1) {$a_\lambda$};
 		\node (1) at (2,2) {$\bar u_\sigma $};
 		\node (1') at (3,3) {$\bar u_\sigma$};
 		\node at (4,4)   {$1$};
 		\node at (5,5) {$0$};
 		\node at (6,6) {$v$};
 		\end{tikzpicture}
 		\begin{tikzpicture}[every node/.style={},xscale=1.5,yscale=1]
 		\node at (0,-0.8) {$\rho-4$};
 		\node at (1,-0.8) {$\rho-3$};
 		\node at (2,-0.8) {$\rho-2$};
 		\node at (3,-0.8) {$\rho-1$};
 		\node at (4,-0.8) {$\rho$};
 		\foreach \from in {1,...,2}
 		\node at (\from+4,-0.8){$	\rho+\from$};
 		\foreach \from in {0,...,6}
 		\node at (-0.6,\from){$\from$};
 		\draw [thick] (-0.3,-0.5) -- (6.6,-0.5);
 		\draw [thick] (-0.3,-0.5) -- (-0.3,6.5);
 		\node at (0,0){$\bar u_\sigma\bar u_{\lambda}$};
 		\node at (1,0){$\bar u_\sigma\sqrt{\bar a_{\lambda}\bar u_{\lambda}}$};
 		\node at (2,0){$\bar u_\sigma\bar a_{\lambda}$};
 		\node at (1,1) {$\bar u_{\lambda}$};
 		\node at (2,1) {$\sqrt{\bar a_{\lambda}\bar u_{\lambda}}$};
 		\node at (3,1) {$\bar a_{\lambda}$};
 		\node (1) at (2,2) {$\bar u_\sigma k^2$};
 		\node (1') at (3,3) {$\bar u_\sigma k^2$};
 		\node at (4,4)   {$1$};
 		\node (u2) at (5,5) {$\bar u_{\sigma}^{-1}$};
 		\node (vu) at (6,6) {$vk^2$};
 		\draw [->] (1) -- (1');
 		\draw [->] (u2) -- (vu);
 		\end{tikzpicture}
 		\begin{tikzpicture}[every node/.style={},xscale=1.5,yscale=1]
 		\node at (0,-0.8) {$\rho-4$};
 		\node at (1,-0.8) {$\rho-3$};
 		\node at (2,-0.8) {$\rho-2$};
 		\node at (3,-0.8) {$\rho-1$};
 		\node at (4,-0.8) {$\rho$};
 		\foreach \from in {1,...,2}
 		\node at (\from+4,-0.8){$	\rho+\from$};
 		\foreach \from in {0,...,6}
 		\node at (-0.6,\from){$\from$};
 		\draw [thick] (-0.3,-0.5) -- (6.6,-0.5);
 		\draw [thick] (-0.3,-0.5) -- (-0.3,6.5);
 		\node at (0,0){$\overline{\bar u}_{\sigma}\overline{\bar u}_{\lambda}$};
 		\node at (1,1) {$\overline{\bar u}_{\lambda}$};
 		\node (1) at (2,2) {$\overline{\bar u}_{\sigma}k^2$};
 		\node (1') at (3,3) {$\overline{\bar u}_{\sigma}k^2$};
 		\node at (4,4)   {$1$};
 		\node (u2) at (5,5) {$\overline{\bar u}_{\sigma}^{-1}$};
 		\node (vu) at (6,6) {$\overline{\bar u}_{\lambda}^{-1}k^2$};
 		\draw [->] (1) -- (1');
 		\end{tikzpicture}
 	\end{center}
 	\thispagestyle{empty}
 \end{figure}
 \thispagestyle{empty}
 \begin{figure}[H]
 	\thispagestyle{empty}
 	\begin{center}
 		\begin{tikzpicture}[every node/.style={},xscale=1.5,yscale=1]
 		\node at (0,-0.8){	 $2\sigma-2$};
 		\node at (1,-0.8){ $2\sigma-1$};
 		\node at (2,-0.8){ $2\sigma$};
 		\foreach \from in {1,...,4}
 		\node at (\from+2,-0.8){ $2\sigma+\from$};
 		\foreach \from in {0,...,6}
 		\node at (-0.6,\from){$\from$};
 		\draw [thick] (-0.3,-0.5) -- (6.6,-0.5);
 		\draw [thick] (-0.3,-0.5) -- (-0.3,6.5);
 		\node at (0,0){$u_{\sigma}^2$};
 		\node at (1,0){$a_\sigma u_\sigma$};
 		\node at (2,0){$a_\sigma^2$};
 		\node (t) at (1,1) {$u_\sigma$};
 		\node (t2) at (2,1) {$a_\sigma$};
 		\node (a) at (2,2) {$v\bar u_\sigma^2$};
 		\node (b) at (3,3) {$v\bar u_\sigma^2$};
 		\node at (4,4) {$0$};
 		\node at (5,5) {$x_{0,1}$};
 		\node at (4,6) {$\frac{v\bar u_\sigma}{\bar a_\lambda}$};
 		\node at (5,6) {$\frac{v\bar u_\sigma}{\sqrt{\bar a_\lambda\bar u_\lambda}}$};
 		\node at (6,6) {$\frac{v\bar u_\sigma}{\bar u_\lambda}$};
 		\draw [->] (t) -- (a);
 		\draw [->,dashed] (t2) -- (b);
 		\end{tikzpicture}
 		\begin{tikzpicture}[every node/.style={},xscale=1.5,yscale=1]
 		\node at (0,-0.8){ $2\sigma-2$};
 		\node at (1,-0.8){$2\sigma-1$};
 		\node at (2,-0.8){$2\sigma$};
 		\foreach \from in {1,...,4}
 		\node at (\from+2,-0.8){$2\sigma+\from$};
 		\foreach \from in {0,...,6}
 		\node at (-0.6,\from){$\from$};
 		\draw [thick] (-0.3,-0.5) -- (6.6,-0.5);
 		\draw [thick] (-0.3,-0.5) -- (-0.3,6.5);
 		\node at (0,0){$\bar u_{\sigma}^2$};
 		\node (t) at (1,1) {$\bar u_\sigma$};
 		\node (a) at (2,2) {$v\bar u_\sigma^2k^2$};
 		\node (a2) at (3,3) {$v\bar u_\sigma^2k^2$};
 		\node at (4,4) {$v\bar u_\sigma$};
 		\node at (5,5) {$v$};
 		\node (b) at (4,6) {$\frac{v\bar u_\sigma}{\bar a_\lambda}k^2$};
 		\node at (5,6) {$\frac{v\bar u_\sigma}{\sqrt{\bar a_\lambda\bar u_\lambda}}k^2$};
 		\node (b2) at (6,6) {$\frac{v\bar u_\sigma}{\bar u_\lambda}k^2$};
 		\draw [->] (a) -- (a2);
 		\draw [->] (t) -- (a);
 		\end{tikzpicture}
 		\begin{tikzpicture}[every node/.style={},xscale=1.5,yscale=1]
 		\node at (0,-0.8){ $2\sigma-2$};
 		\node at (1,-0.8){$2\sigma-1$};
 		\node at (2,-0.8){$2\sigma$};
 		\foreach \from in {1,...,4}
 		\node at (\from+2,-0.8){$2\sigma+\from$};
 		\foreach \from in {0,...,6}
 		\node at (-0.6,\from){$\from$};
 		\draw [thick] (-0.3,-0.5) -- (6.6,-0.5);
 		\draw [thick] (-0.3,-0.5) -- (-0.3,6.5);
 		\node at (0,0){$\overline{\bar u}_{\sigma}^2$};
 		\node at (1,1) {$\overline{\bar u}_\sigma$};
 		\node (a) at (2,2) {$\overline{\bar u}_\sigma^2 \overline{\bar u}_\lambda^{-1} k^2$};
 		\node (b) at (3,3) {$\overline{\bar u}_\sigma^2 \overline{\bar u}_\lambda^{-1} k^2$};
 		\node at (4,4) {$\overline{\bar u}_\sigma\overline{\bar u}_\lambda^{-1}$};
 		\node at (5,5) {$\overline{\bar u}_\lambda^{-1} $};
 		\node at (6,6) {$\overline{\bar u}_\sigma\overline{\bar u}_\lambda^{-2} k^2$};
 		\draw [->] (a) -- (a2);
 		\end{tikzpicture}
 	\end{center}
 	\thispagestyle{empty}
 \end{figure}
 \thispagestyle{empty}
 \thispagestyle{empty}

 \section{\texorpdfstring{The $RO(C_4)$ homology of a point in $\F_2$ coefficients}{The RO(C4) homology of a point in F2 coefficients}}\label{AppendixPoint}
 
 In this appendix, we write down the detailed computation of $k_{\bigstar}$ for $\bigstar\in RO(C_4)$. We use the following notation for Mackey functors (compare with \cite{Geo19}).
 
 \begin{equation}
 k=\begin{tikzcd}
 k\ar[d, "1" left, bend right]\\
 k\ar[u, "0" right,bend right]\ar[d, "1" left, bend right]\\
 k\ar[u, "0" right,bend right]
 \end{tikzcd}\quad \quad \quad
 k_{-}=\begin{tikzcd}
 0\ar[d, bend right]\\
 k\ar[u,bend right]\ar[d, "1" left, bend right]\\
 k\ar[u, "0" right,bend right]
 \end{tikzcd}
\quad \quad \quad
 \ev{k}=\begin{tikzcd}
 k\ar[d, bend right]\\
 0\ar[u,bend right]\ar[d, bend right]\\
 0\ar[u, bend right]
 \end{tikzcd}
 \quad \quad \quad
 \overline{\ev{k}}=\begin{tikzcd}
 0\ar[d, bend right]\\
 k\ar[u,bend right]\ar[d, bend right]\\
 0\ar[u, bend right]
 \end{tikzcd}
 \end{equation}
 \begin{gather}
 L=\begin{tikzcd}
 k\ar[d, "0" left, bend right]\\
 k\ar[u, "1" right,bend right]\ar[d, "0" left, bend right]\\
 k\ar[u, "1" right,bend right]
 \end{tikzcd}\quad \quad \quad
 p^*L=\begin{tikzcd}
 k\ar[d, "0" left, bend right]\\
 k\ar[u, "1" right,bend right]\ar[d, "1" left, bend right]\\
 k\ar[u, "0" right,bend right]
 \end{tikzcd}\quad \quad \quad
 Q=\begin{tikzcd}
k\ar[d, "0" left, bend right]\\
k\ar[u, "1" right,bend right]\ar[d, bend right]\\
0\ar[u,bend right]
\end{tikzcd}
\quad \quad \quad
 Q^{\sharp}=\begin{tikzcd}
k\ar[d, "1" left, bend right]\\
k\ar[u, "0" right, bend right]\ar[d, left, bend right]\\
0\ar[u, right, bend right]
\end{tikzcd}
	\end{gather}
 \begin{equation}
 L^{\sharp}=\begin{tikzcd}
 k\ar[d, "1" left, bend right]\\
 k\ar[u, "0" right,bend right]\ar[d, "0" left, bend right]\\
 k\ar[u, "1" right, bend right]
 \end{tikzcd}\quad \quad \quad
 k_{-}^{\flat}=\begin{tikzcd}
 0\ar[d, bend right]\\
 k\ar[u, bend right]\ar[d, "0" left, bend right]\\
 k\ar[u, "1" right, bend right]
 \end{tikzcd}
 \end{equation} 
 
 Henceforth $n,m\ge 0$. We employ the notation $a|b|c$ to denote the generators of all three levels of a Mackey functor, from top to bottom, used in \cite{Geo19}.
	\subsection{\texorpdfstring{$\boldsymbol{k_*S^{n\sigma+m\lambda}}$}{Sigma plus Lambda oriented}}

	\begin{equation}
	k_*(S^{n\sigma+m\lambda})=
	\begin{cases}\hspace{-0.3em}
	\begin{tabular}{l p{0.5em} l p{1.5em} l}
	$k$			& if &  $*=n+2m$	&&\\
	$Q^{\sharp}$		& if & $n\le *<n+2m$		&and &$*-n$ is even\\
	$Q$		& if & $n+1\le *<n+2m$		&and &$*-n$ is odd\\
	$\ev{k}$& if &  $0\le *<n$&
	\end{tabular}
	\end{cases}
	\end{equation}

	\def\arraystretch{2}
	\setlength{\tabcolsep}{4pt}
	\begin{center}
		\begin{tabular}{p{0.2em} l p{4em} l p{1.5em} l}
			
			$\bullet$& $u_{\sigma}^{n}u_{\lambda}^m|\bar u_{\sigma}^n\bar u_{\lambda}^m|\bar{\bar{u}}_{\sigma}^n \bar{\bar{u}}_{\lambda}^m$		& generates &$k_{n+2m}=k$		&	 &\\

			$\bullet$&$u_{\sigma}^{n}a_{\lambda}^{m-i}u_{\lambda}^{i}|\bar u_{\sigma}^n\bar a_{\lambda}^{m-i}\bar u_{\lambda}^i|\zero$	& generates &$k_{n+2i}=Q^{\sharp}$	&for & $0\le i<m$\\
			
			$\bullet$&$a_{\sigma}u_{\sigma}^{n-1}a_{\lambda}^{m-i}u_{\lambda}^{i}|\bar u_{\sigma}^n\bar a_{\lambda}^{m-i}\bar u_{\lambda}^{i-1}\sqrt{\bar a_{\lambda}\bar u_{\lambda}}|\zero$	& generates &$k_{n+2i-1}=Q$	&for & $1\le i\le m$, $n>0$\\
						
			$\bullet$&$\dfrac{a_{\sigma}a_{\lambda}^{m-i}u_{\lambda}^{i}}{u_{\sigma}}|\bar a_{\lambda}^{m-i}\bar u_{\lambda}^{i-1}\sqrt{\bar a_{\lambda}\bar u_{\lambda}}|\zero$	& generates &$k_{2i-1}=Q$	&for & $1\le i\le m$, $n=0$\\
			
			$\bullet$& $a_{\sigma}^{n-i}u_{\sigma}^{i}a_{\lambda}^m|\zero|\zero$ 	& generates &$k_{i}=\ev{\Z/2}$ 	&for& $0\le i< n$
		\end{tabular}
	\end{center}

	\subsection{\texorpdfstring{$\boldsymbol{k_*S^{-n\sigma-m\lambda}}$}{Minus Sigma minus Lambda oriented}}
	If $n,m$ are not both $0$,
	
	\begin{equation}
	k_*(S^{-n\sigma-m\lambda})=
	\begin{cases}
	\hspace{-0.3em}
	\begin{tabular}{l p{0.5em} l p{1.5em} l p{1.5em} l}
	$L$			& if &  $*=-n-2m$		&and & $m\neq 0$&&\\
	$p^*L$		& if &	$*=-n-2m$		&and & $n>1, m=0$&&\\
	$k_-$		& if &	$*=-1$		&and & $n=1, m=0$&&\\
	$Q^{\sharp}$	& if &	$-n-2m<*<-n-1$	&and & $*+n$ is odd&&\\
	$Q$	& if &	$-n-2m<*<-n-1$	&and & $*+n$ is even&&\\
	$\ev{k}$& if & $-n-1\le *<-1$ &and& $m\neq 0$&&\\
	$\ev{k}$& if & $-n+1\le *<-1$  &and& $m=0$ &&\\
	\end{tabular}
	\end{cases}
	\end{equation}
	
	\def\arraystretch{3}
	\setlength{\tabcolsep}{4pt}
	\begin{center}
		\begin{tabular}{p{0.2em} l p{4em} l p{1.5em} l}
			$\bullet$&	$\Tr_1^4\left(\dfrac1{\bar{\bar{u}}_{\sigma}^{n}\bar{\bar{u}}_{\lambda}^{m}}\right)\Big\rvert \Tr_1^2\left(\dfrac1{\bar{\bar{u}}_{\sigma}^{n}\bar{\bar{u}}_{\lambda}^{m}}\right)\Big\rvert \dfrac1{\bar{\bar{u}} _{\sigma}^{n}\bar{\bar{u}}_{\lambda}^{m}}$		&		 generates & $k_{-n-2m}=L$		 &for &$m\neq 0$\\

			$\bullet$&	$\dfrac{\theta}{u_{\sigma}^{n-2}}\Big\rvert \bar u_{\sigma}^{-n}\Big\rvert\bar{\bar{u}}_{\sigma}^{-n}$  &generates	&$k_{-n}=p^*L$	 &for & $m=0, n\ge 2$ \\
			
			$\bullet$&	$\zero|\bar u_{\sigma}^{-1}|\bar{\bar{u}}_{\sigma}^{-1}$  &generates	&$k_{-1}=k_{-}$	 &for & $n=1$, $m=0$ \\
			
			$\bullet$&	$\dfrac{s}{u_{\sigma}^{n}a_{\lambda}^{i-2}u_{\lambda}^{m-i}}\Big\rvert\dfrac{\bar s}{\bar u_{\sigma}^{n}\bar a_{\lambda}^{i-2}\bar u_{\lambda}^{m-i}}\Big\rvert \zero$ & generates & $k_{-n-2m+2i-3}=Q^{\sharp}$  &for & $2\le i\le m$\\
			
			$\bullet$&$\dfrac{x_{0,1}}{u_{\sigma}^na_{\lambda}^{m-i}u_{\lambda}^{i-1}}\Big\rvert \dfrac{v}{\bar u_{\sigma}^n\bar a_{\lambda}^{m-i}\bar u_{\lambda}^{i-1}} \Big\rvert \zero$& generates & $k_{-n-2i}=Q$  &for & $1\le i< m$\\
						
		$\bullet$&	$\dfrac{x_{0,1}}{a_{\sigma}^{n-i}u_{\sigma}^ia_{\lambda}^{m-1}}\Big\rvert \zero \Big\rvert \zero$ &generates &$k_{-i-2}=\ev{k}$  &for & $0\le i\le n-1$, $m\neq 0$\\
			
			$\bullet$&	$\dfrac{\theta}{a_{\sigma}^{n-i}u_{\sigma}^{n-2}}\Big\rvert \zero \Big\rvert \zero$ &generates &$k_{-i}=\ev{k}$  &for & $2\le i<n$, $m=0$\\
		\end{tabular}
	\end{center}

		\subsection{\texorpdfstring{$\boldsymbol{k_*S^{m\lambda-n\sigma}}$}{Lambda minus Sigma oriented}} If $n,m$ are both nonzero,
	
	\begin{equation}
	k_*(S^{m\lambda-n\sigma})=
	\begin{cases}
	\hspace{-0.3em}
	\begin{tabular}{l p{0.5em} l p{1.5em} l p{1.5em} l}
	$\ev{k}$		& if &$2m-n< *\le -2$ 		&&&&\\
	$k$			& if &  $*=2m-n\ge -1$		& &&&\\
	$\ev{k}\oplus k$			& if &  $*=2m-n\le -2$		& &&&\\
	$Q$	& if &$-1\le *<2m-n$	&and &  $*+n$ is odd &&\\
	$Q^{\sharp}$	& if &$-1\le *<2m-n$	&and &  $*+n$ is even  &&\\
	$\ev{k}	\oplus Q$	& if &$-n+1\le *<2m-n$	&and &  $*+n$ is odd & and & $*\le -2$\\
	$\ev{k}	\oplus Q^{\sharp}$	& if &$-n+2\le *<2m-n$	&and &  $*+n$ is even  & and & $*\le -2$\\
	$Q$			& if & $*=-n$& and &$n\ge 2$&&\\
	$\overline{\ev{k}}$				& if & $*=-1$& and &$n=1$&&\\
	\end{tabular}
	\end{cases}
	\end{equation}

	\begin{center}
		\begin{tabular}{p{0.2em} l p{4em} l p{1.5em} l}
			$\bullet$&	$\dfrac{u_{\lambda}^m}{u_{\sigma}^n}\Big\rvert \dfrac{\bar u_{\lambda}^m}{\bar u_{\sigma}^{n}}\Big\rvert \dfrac{\bar{\bar u}_{\lambda}^m}{\bar{\bar u}_{\sigma}^n}$		&		 generates &  the $k$ in $k_{2m-n}$		 &&\\

			$\bullet$&	$\dfrac{a_{\lambda}^iu_{\lambda}^{m-i}}{u_{\sigma}^{n}}\Big\rvert\dfrac{\bar a_{\lambda}^i\bar u_{\lambda}^{m-i}}{\bar u_{\sigma}^{n}}\Big\rvert \zero$  &generates	& the $Q^{\sharp}$ in $k_{2m-n-2i}$	 &for & $0<i<m$ \\
			
			$\bullet$&	$\dfrac{a_{\sigma}a_{\lambda}^iu_{\lambda}^{m-i}}{u_{\sigma}^{n+1}}\Big\rvert \dfrac{\sqrt{\bar a_{\lambda}\bar u_{\lambda}}\bar a_{\lambda}^i\bar u_{\lambda}^{m-i-1}}{\bar u_{\sigma}^n}\Big\rvert \zero$  &generates	& the $Q$ in $k_{2m-n-2i-1}$	 &for & $0\le i<m$ \\

			$\bullet$&	$\dfrac{\theta a_{\lambda}^m}{a_{\sigma}^{n-i}u_{\sigma}^{i-2}}\Big\rvert \zero\Big\rvert \zero$ & generates & the $\ev{k}$ in $k_{-i}$  &for & $2\le i<n$\\
			
			$\bullet$&	$\dfrac{\theta a_{\lambda}^m}{u_{\sigma}^{n-2}}\Big\rvert \bar a_{\lambda}^m\bar u_{\sigma}^{-n}\Big\rvert \zero$ &generates &$k_{-n}=Q$   & for & $n\ge 2$\\
			
			$\bullet$&	$\zero\Big\rvert \bar u_{\sigma}^{-1}\bar a_{\lambda}^m \Big\rvert \zero$ &generates &$k_{-1}=\overline{\ev{k}}$ & for & $n=1$
		\end{tabular}
	\end{center}

	\subsection{\texorpdfstring{$\boldsymbol{k_*S^{n\sigma-m\lambda}}$}{Sigma minus Lambda oriented}} If $n,m$ are both nonzero,
	
	\begin{equation}
	k_*(S^{n\sigma-m\lambda})=
	\begin{cases}
	\hspace{-0.3em}
	\begin{tabular}{l p{0.5em} l p{1.5em} l p{1.5em} l}
	$Q^{\sharp}$			& if &  $*=n-2$		& and & $n,m\ge 2$&&\\
	$\overline{\ev{k}}$	& if &  $*=-1$		& and & $n=1, m\ge 2$&&\\
	
	$\ev{k}\oplus Q$		& if & $n-2m<*<n-2$	& and & $*+n$ is even& and &$*\ge 0$\\
	$\ev{k}\oplus Q^{\sharp}$		& if & $n-2m<*<n-2$	& and & $*+n$ is odd& and &$*\ge 0$\\
	
	$Q$		& if & $n-2m<*<n-2$	& and & $*+n$ is even& and &$*<0$\\
	$Q^{\sharp}$		& if & $n-2m<*<n-2$	& and & $*+n$ is odd& and &$*<0$\\
		
	$L\oplus \ev{k}$				& if & $*=n-2m$& and &$n-2m\ge 0$&and &  $m\ge 2$\\
	$L$				& if &  $*=n-2m$& and& $n-2m<0$ &and & $m\ge 2$\\
	$L^{\sharp}$				& if &$*=n-2$  &and & $n>1$& and &$m=1$\\
	$k_{-}^{\flat}$				& if &$*=-1$  &and & $n=1$& and &$m=1$\\
	
		$\ev{k}$		& if &$0\le *<n-2m$&&&&	
	\end{tabular}
	\end{cases}
	\end{equation}

	\begin{center}
		\begin{tabular}{p{0.2em} l p{4em} l p{1.5em} l}
			$\bullet$&	$\dfrac{a_{\sigma}^2u_{\sigma}^{n-2}}{a_{\lambda}^{m}}\Big\rvert \dfrac{v\bar u_{\sigma}^n}{\bar a_{\lambda}^{m-1}}\Big\rvert \zero$		&		 generates &  $k_{n-2}=Q^{\sharp}$		 & for & $n,m\ge 2$\\
			
			$\bullet$&	$0\Big\rvert \dfrac{v\bar u_{\sigma}}{\bar a_{\lambda}^{m-1}}\Big\rvert \zero$		&		 generates &  $k_{n-2}=\overline{\ev{k}}$		 & for & $n=1,m\ge 2$\\			
			
			$\bullet$&	$\dfrac{x_{0,2}u_{\sigma}^n}{a_{\lambda}^{i-1}u_{\lambda}^{m-i-1}}\Big\rvert \dfrac{v\bar u_{\sigma}^n}{\bar a_{\lambda}^{i-1}\bar u_{\lambda}^{m-i}}\Big\rvert  0$ &generates &the  $Q$ in $k_{n-2m+2i-2}$  & for & $2\le i\le m-1$\\
						
			$\bullet$&	$ \dfrac{su_{\sigma}^n}{a_{\lambda}^{i-2}u_{\lambda}^{m-i}}\Big\rvert \dfrac{\bar s\bar u_{\sigma}^n}{\bar a_{\lambda}^{i-2}\bar u_{\lambda}^{m-i}}\Big\rvert  0$ &generates &the  $Q^{\sharp}$ in $k_{n-2m+2i-3}$  & for & $2\le i\le m$\\
						
			$\bullet$&	$\dfrac{x_{0,2}u_{\sigma}^n}{u_{\lambda}^{m-2}}\Big\rvert \dfrac{v\bar u_{\sigma}^n}{\bar u_{\lambda}^{m-1}}\Big\rvert  \bar {\bar u}_{\sigma}^n\bar {\bar u}_{\lambda}^{-m}$ &generates &the  $L$ in $k_{n-2m}$  & for & $m\ge 2$\\
			
			$\bullet$&	$\dfrac{a_{\sigma}^2u_{\sigma}^{n-2}}{a_{\lambda}}\Big\rvert v\bar u_{\sigma}^n\Big\rvert \bar{\bar u}_{\sigma}^n\bar{\bar u}_{\lambda}^{-1}$	 &generates &$k_{n-2}=L^{\sharp}$  & for & $n>1, m=1$\\
			
			$\bullet$&	$\zero\Big\rvert v\bar u_{\sigma}\Big \rvert\bar{\bar u}_{\sigma}\bar{\bar u}_{\lambda}^{-1}$ &generates &$k_{-1}=k_{-}^{\flat}$  & for & $n=m=1$\\
			
			$\bullet$&	$\dfrac{a_{\sigma}^{i}u_{\sigma}^{n-i}}{a_{\lambda}^m}\Big\rvert \zero\Big\rvert \zero$ & generates & the $\ev{k}$ in  $k_{n-i}$  &for &  $2< i\le n$\\
		\end{tabular}
	\end{center}

\subsection{Subtleties about quotients} In this subsection, we investigate the subtleties regarding quotients $y/x$, similar to what we did in \cite{Geo19} for the integer coefficient case.

The crux of the matter is as follows: If we have $ax=y$ in $k^{C_4}_{\bigstar}$ then we can immediately conclude that $a=y/x$ as long as $a$ is the \emph{unique} element in its $RO(C_4)$ degree satisfying $ax=y$. Unfortunately, as we can see from the detailed description of $k^{C_4}_{\bigstar}$, there are degrees $\bigstar$ for which $k^{C_4}_{\bigstar}$ is a two dimensional vector space, generated by elements $a,b$ both satisfying $ax=bx=y$; in this case $a,b$ are both candidates for $y/x$ and we need to distinguish them somehow. This is done by looking at the products of $a,b$ with other Euler/orientation classes.

For a concrete example, take $k^{C_4}_{-2+4\sigma-\lambda}$ which is $k^2$ with generators $a,b$ such that $$u_\sigma a=u_\sigma b=\frac{u_\lambda}{u_\sigma^3}$$ 
so both $a,b$ are candidates for $u_\lambda/u_\sigma^4$ (for degree reasons, there is a unique choice for $u_\lambda/u_\sigma^3$). To distinguish $a,b$, we use multiplication by $a_\sigma^2$: for one generator, say $a$, we have $a_\sigma^2a=0$ while for the other generator we get $a_\sigma^2b=\theta a_\lambda$. So now $$a_\sigma^2(a+b)=a_\sigma^2b=\theta a_\lambda$$
and both $a+b,b$ are candidates for $(\theta a_\lambda)/a_\sigma^2$. However, $\theta/a_\sigma^2$ is defined uniquely and we insist
\begin{equation}
	\frac{x}{z}\frac{y}{w}=\frac{xy}{zw}
\end{equation}
whenever $xy\neq 0$, thus $(\theta a_\lambda)/a_\sigma^2$ is uniquely defined from:
\begin{equation}
	\frac{\theta a_\lambda}{a_\sigma^2}=\frac{\theta}{a_\sigma^2}a_\lambda
\end{equation}
Multiplying with $u_\sigma$ returns $0$ and as $u_\sigma b\neq 0$, we conclude that
\begin{equation}
	a+b=\frac{\theta a_\lambda}{a_\sigma^2}
\end{equation}
Since $a_\sigma^2a=0$ and $a_\sigma^2b=\theta a_\lambda$ we conclude that:
\begin{equation}
	a=\frac{u_\lambda}{u_\sigma^4}\text{ , }b=\frac{u_\lambda}{u_\sigma^4}+\frac{\theta a_\lambda}{a_\sigma^2}
\end{equation}

More generally, we can use $u_\sigma$ and $a_\sigma$ multiplication to  distinguish
\begin{gather}
\frac{a_{\lambda}^*u_{\lambda}^{*>0}}{u_{\sigma}^{*}}, \frac{\theta a_{\lambda}^{*>0}}{a_{\sigma}^{*\ge 2}u_{\sigma}^{*}}, \frac{a_{\lambda}^*u_{\lambda}^{*>0}}{u_{\sigma}^{*}}+\frac{\theta a_{\lambda}^{*>0}}{a_{\sigma}^{*\ge 2}u_{\sigma}^{*}}
\end{gather}
Here, $*\ge 0$ is a generic index i.e. the $12$ total instances of $*$ can all be different; the important thing is that the $*$'s are chosen so that these three elements are in the same $RO(C_4)$ degree.\medbreak

We can also distinguish between 
\begin{gather}
\frac{a_{\sigma}a_{\lambda}^*u_{\lambda}^{*>0}}{u_{\sigma}^{*}}, \frac{\theta a_{\lambda}^{*>0}}{a_{\sigma}^{*\ge 2}u_{\sigma}^{*}}, \frac{a_{\sigma}a_{\lambda}^*u_{\lambda}^{*>0}}{u_{\sigma}^{*}}+\frac{\theta a_{\lambda}^{*>0}}{a_{\sigma}^{*\ge 2}u_{\sigma}^{*}}
\end{gather} 
by $u_{\sigma}$ and $a_{\sigma}$ multiplication, although it's easier to use that only the first of the three elements is a transfer.

We distinguish
\begin{gather}
\frac{a_{\sigma}^{*\ge 2}u_{\sigma}^*}{a_{\lambda}^{*}}, \frac{x_{0,2}u_{\sigma}^*}{a_{\lambda}^{*}u_{\lambda}^{*}}, \frac{a_{\sigma}^{*\ge 2}u_{\sigma}^*}{a_{\lambda}^{*}}+\frac{x_{0,2}u_{\sigma}^*}{a_{\lambda}^{*}u_{\lambda}^{*}}
\end{gather} 
by $a_{\lambda}^i$ multiplication  (which for large enough $i$ annihilates only the second term) and $a_{\sigma}$ multiplication (which annihilates only the first term). We similarly distinguish
\begin{gather}
\frac{a_{\sigma}^{*\ge 2}u_{\sigma}^*}{a_{\lambda}^{*}}, \frac{su_{\sigma}^*}{a_{\lambda}^{*}u_{\lambda}^{*}}, \frac{a_{\sigma}^{*\ge 2}u_{\sigma}^*}{a_{\lambda}^{*}}+\frac{su_{\sigma}^*}{a_{\lambda}^{*}u_{\lambda}^{*}}
\end{gather}
by $a_{\lambda}^i$ and $a_{\sigma}^2$ multiplication.

\phantom{1}\smallbreak
\begin{small}
	\noindent  \textsc{Department of Mathematics, University of Chicago}\\
	\textit{E-mail:} \verb|nickg@math.uchicago.edu|\\
	\textit{Website:} \href{http:://math.uchicago.edu/~nickg}{math.uchicago.edu/$\sim$nickg}
\end{small}


\begin{thebibliography}{999}
	
	\bibitem[Cho18]{Cho18}	Z. Chonoles, \emph{The $RO(G)$-graded cohomology of the equivariant classifying space $B_GSU(2)$}, \href{https://arxiv.org/abs/1808.00604}{arxiv.org/pdf/1808.00604} 
	
			\bibitem[Geo19]{Geo19} 	N. Georgakopoulos, \emph{The $RO(C_4)$ integral homology of a point},	\href{https://arxiv.org/pdf/1912.06758.pdf}{arXiv:1912.06758}
	\smallbreak
	
			\bibitem[GM95]{GM95} J.P.C. Greenlees, J.P. May, \emph{Generalized Tate Cohomology}, Memoirs of the American Mathematical Society
543 (1995).	\smallbreak
		
	\bibitem[HHR16]{HHR16}	M. A. Hill, M. J. Hopkins, D. C. Ravenel, \emph{On the non-existence
		of elements of Kervaire invariant one}, Annals of Mathematics, Volume 184 (2016), Issue 1
	\smallbreak
	
	
				\bibitem[Hil19]{Hil19} M. A. Hill, \emph{Freeness and equivariant stable homotopy},	\href{	https://arxiv.org/pdf/1910.00664.pdf}{arXiv:1910.00664}
	\smallbreak
	
			\bibitem[HK96]{HK96} P. Hu, I. Kriz, \emph{Real-oriented homotopy theory and an analogue of the
		Adams Novikov spectral sequence},	 Topology 40 (2001), no. 2, 317–399
	\smallbreak
	
	\bibitem[HO14]{HO14} J. Heller, K. Ormsby, \emph{Galois equivariance and stable motivic homotopy theory},	\href{	https://arxiv.org/abs/1401.4728v3}{arXiv:1401.4728v3}
	\smallbreak
	
	
			\bibitem[IWX20]{IWX20} 	D. C. Isaksen, G. Wang, Z. Xu, \emph{More stable stems},	\href{https://arxiv.org/pdf/2001.04511.pdf}{arXiv:2001.04511}
	\smallbreak
	
	
	\bibitem[Lew88]{Lew88} L. G. Lewis, \emph{The RO(G)-graded equivariant ordinary cohomology of
	complex projective spaces with linear $\Z/p$ actions},	Algebraic topology and transformation groups, Volume 1361, pg 53–122, Springer, 1988.
\smallbreak

		\bibitem[CMay18]{CMay18} C. May, \emph{A structure theorem for $RO(C_2)$-graded Bredon cohomology},	\href{https://arxiv.org/pdf/1804.03691.pdf}{arXiv:1804.03691}
	\smallbreak


	\bibitem[Wil19]{Wil19} D. Wilson, \emph{$C_2$-equivariant Homology Operations: Results and Formulas},	\href{	https://arxiv.org/pdf/1905.00058.pdf}{arXiv:1905.00058}
\smallbreak
		
	\bibitem[Haz19]{Haz19} C. Hazel, \emph{The $RO(C_2)$-graded cohomology of $C_2$-surfaces in $\underline{\Z/2}$-coefficients},	\href{https://arxiv.org/pdf/1907.07280.pdf}{arXiv:1907.07280}
	\smallbreak

	\bibitem[Shu14]{Shu14}	M. Schulman, \emph{Equivariant local coefficients and the $RO(G)$-graded cohomology of classifying spaces
	}, \href{https://arxiv.org/abs/1405.1770}{arXiv:1405.1770} 
	\smallbreak

	\bibitem[SW21]{SW21} K. Sankar, D. Wilson, \emph{On the $C_p$-equivariant dual Steenrod algebra},	\href{	https://arxiv.org/pdf/2103.16006.pdf}{arXiv:2103.16006}
\smallbreak

	\bibitem[Zeng17]{Zeng} M. Zeng, \emph{Equivariant Eilenberg-MacLane spectra in cyclic $p$-groups
	}, \href{https://arxiv.org/pdf/1710.01769}{arXiv:1710.01769v2}


\end{thebibliography}
\end{document}